\pgfplotsset{compat=1.11}
\numberwithin{equation}{section}
\newtheorem{theorem}{Theorem}[section]
\newtheorem{lemma}[theorem]{Lemma}
\newtheorem{proposition}[theorem]{Proposition}
\newtheorem{definition}[theorem]{Definition}
\newtheorem{remark}[theorem]{Remark}
\newtheorem{question}{Question}
\newtheorem{corollary}[theorem]{Corollary}
\newtheorem{thmintro}{Theorem}
\newcommand{\eps}{\varepsilon}
\newcommand{\Z}{\mathbb Z}
\DeclareMathOperator{\sgn}{sgn}
\DeclareMathOperator{\Int}{Int}
\DeclareMathOperator{\dist}{dist}
\DeclareMathOperator{\Leb}{Leb}
\newcommand{\Addresses}{{
  \bigskip
  \footnotesize

  \textsc{Mathematics Department, Stony Brook University}
	\par\nopagebreak
	\textsc{Simons Center for Geometry and Physics, Stony Brook, NY}
	\par\nopagebreak
  \textit{E-mail address:} \texttt{alena.erchenko@stonybrook.edu}
}}
\title{Flexibility of Lyapunov exponents with respect to two classes of measures on the torus}
\author{Alena Erchenko}
\date{}
\begin{document}

\maketitle

\newsavebox{\smlmat}
\savebox{\smlmat}{$\left(\begin{smallmatrix}2&1\\1&1\end{smallmatrix}\right)$}

\begin{center}
 \emph{Dedicated to Anatole Katok}
\end{center}
\vspace{0.5cm}

\begin{abstract}
    We consider a smooth area-preserving Anosov diffeomorphism $f\colon \mathbb T^2\rightarrow \mathbb T^2$ homotopic to an Anosov automorphism $L$ of $\mathbb T^2$. It is known that the positive Lyapunov exponent of $f$ with respect to the normalized Lebesgue measure is less than or equal to the topological entropy of $L$, which, in addition, is less than or equal to the Lyapunov exponent of $f$ with respect to the probability measure of maximal entropy. Moreover, the equalities only occur simultaneously. We show that these are the only restrictions on these two dynamical invariants.
\end{abstract}

\tableofcontents

\section{Introduction}

The aim of the flexibility program is to study natural classes of smooth dynamical systems and to find \textit{constructive tools} to freely manipulate dynamical data inside a fixed class. The result described in this paper is another example demonstrating the flexibility principle in dynamical systems. 

\subsection{Anosov volume-preserving diffeomorphisms on tori}\label{section: anosov}

Consider an $n$-dimensional torus $\mathbb T^n = \mathbb R^n/\mathbb Z^n$, $n\geq 2$. Anosov volume-preserving $C^\infty$ (smooth) diffeomorphisms represent a natural class for studying flexibility questions. For any $f$ in this class the tangent bundle of $\mathbb T^n$ splits as a direct sum $T\mathbb T^n=E^u\oplus E^s$ of two $Df$-invariant subbundles $E^u$ (unstable) and $E^s$ (stable) such that $E^u$ is uniformly expanded by $Df$ and $E^s$ is uniformly contracted by $Df$. Moreover, by \cite[Theorem 18.6.1]{KatokHasselblatt}, $f$ is homotopic and topologically conjugate to an Anosov automorphism $L$ given by a hyperbolic matrix in $SL(n,\mathbb Z)$, i.e., a matrix in $SL(n, \mathbb Z)$ with no eigenvalues on the unit circle. 

The \textit{Lyapunov exponent} of $f$ at $x\in\mathbb T^n$, $\pmb v\in T_x\mathbb T^n\setminus\{\pmb 0\}$ is given by
\begin{equation}\label{def: Lyapunov exponent}
\lambda(f,x,\pmb v) = \limsup\limits_{n\rightarrow\infty}\frac{\log\|Df^n_x\pmb v\|}{n}.
\end{equation}

Since $f$ is volume-preserving, $f$ preserves the normalized Lebesgue measure $\Leb$ which is ergodic \cite{AnosovSinai}. Moreover, there exists a unique measure of maximal entropy $\nu_f$ for $f$ which is ergodic \cite{Bowen}. Applying the Oseledets Multiplicative Ergodic Theorem, we obtain that for any $f$-invariant ergodic Borel probability measure $\mu$ the limit 
\begin{equation*}
\lim\limits_{n\rightarrow \infty}\frac{\log\|D_xf^n\pmb v\|}{n}
\end{equation*}
exists for $\mu$-almost every $x\in\mathbb T^n$ and all non-zero $\pmb v\in T_x\mathbb T^n$. In particular, we obtain a collection of $n$ numbers $\lambda_{1,\mu}(f)\geq\ldots\geq \lambda_{n,\mu}(f)$ as possible limits that are called \textit{Lyapunov exponents of $f$ with respect to $\mu$}. Since $f$ is Anosov, all these numbers are nonzero. We denote the number of positive elements in this list by $u(f)$ and call it the \textit{unstable index of $f$}. Also, for any $f$-invariant ergodic Borel probability measure $\mu$:
\begin{equation}\label{Lyapunov exponent through integral}
\sum\limits_{i=1}^{u(f)}\lambda_{i,\mu}(f) = \int_{\mathbb T^n}\log\|Df|_{E^u}\|d\mu.
\end{equation}

The Lyapunov exponents with respect to $Leb$ and $\nu_f$ are natural dynamical data to look at in the context of the flexibility program. 

We denote by $h_{top}(f)$ and $h_{\Leb}(f)$ the topological entropy and the metric entropy with respect to $\Leb$ of $f$, respectively. The following relations for the considered entropies and Lyapunov exponents are known in this setting:
\begin{itemize}
\item For an Anosov automorphism $L\colon\mathbb T^n\rightarrow\mathbb T^n$, we have $h_{top}(L)=h_{\Leb}(L)=\sum\limits_{i=1}^{u(L)}\lambda_{i,\Leb}(L)$;
\item Since $h_{top}$ is an invariant of topological conjugacy \cite[Corollary 3.1.4]{KatokHasselblatt}, we obtain that if $f$ is homotopic to an Anosov automorphism $L$, then $h_{top}(f)=h_{top}(L)$;
\item Since $f$ is volume-preserving, we have $\sum\limits_{i=1}^n\lambda_{i,\Leb}(f)=\sum\limits_{i=1}^n\lambda_{i,\nu_f}(f)=0$;
\item Variational Principle for entropies \cite[Theorem 4.5.3]{KatokHasselblatt}: $h_{\Leb}(f)\leq h_{top}(f)$; 
\item Ruelle's inequality \cite{Ruelle}: $h_{top}(f)\leq \sum\limits_{i=1}^{u(f)}\lambda_{i,\nu_f}(f)$;
\item Pesin's entropy formula \cite[Theorem 10.4.1]{BarreiraPesin}: $h_{\Leb}(f) = \sum\limits_{i=1}^{u(f)}\lambda_{i,\Leb}(f)$.
\end{itemize}

Thus, some representative questions (ordered in increasing number of requirements) concerning flexibility of Lyapunov exponents for Anosov volume-preserving diffeomorphisms on $\mathbb T^n$ are the following:

\begin{question}[Conjecture 1.4 in \cite{BKRH}, weak flexibility for one measure]\label{weak flexibility 1}
Given any list of nonzero numbers $\xi_1\geq\ldots\geq \xi_n$ such that $\sum\limits_{i=1}^n\xi_i=0$, does there exist a smooth volume-preserving Anosov diffeomorphism $f$ of $\mathbb T^n$ such that 
\begin{equation*}
(\lambda_{1,\Leb}(f),\ldots, \lambda_{n,\Leb}(f))=(\xi_1,\ldots,\xi_n)?
\end{equation*}
\end{question}

\begin{question}[Problem 1.3 in \cite{BKRH}, strong flexibility for one measure]\label{strong flexibility 1}
Let $L\colon \mathbb T^n\rightarrow\mathbb T^n$, $n\geq 2$, be a volume-preserving Anosov automorphism with the unstable index $u$. Given any list of numbers 
\begin{equation*}
\xi_1\geq\ldots\geq\xi_u>0>\xi_{u+1}\geq\ldots\xi_n\quad\text{such that:}
\end{equation*}
\begin{equation*}
\sum\limits_{i=1}^n\xi_i=0\qquad\text{and}\qquad \sum\limits_{i=1}^u\xi_i\leq h_{top}(L),
\end{equation*}
does there exist a smooth volume-preserving Anosov diffeomorphism $f\colon\mathbb T^n\rightarrow \mathbb T^n$ homotopic to $L$ such that 
\begin{equation*}
(\lambda_{1,\Leb}(f),\ldots, \lambda_{n,\Leb}(f))=(\xi_1,\ldots,\xi_n)?
\end{equation*}
\end{question}

\begin{question}[Weak flexibility for two measures]\label{weak flexibility 2}
Let $n\in\mathbb N\setminus\{1\}$ and $u\in\mathbb N\cap [1,n)$. Given any two lists of numbers 
\begin{equation*}
\xi_1\geq\ldots\geq\xi_u>0>\xi_{u+1}\geq\ldots\xi_n \quad \text{and} \quad \eta_1\geq\ldots\geq \eta_u>0>\eta_{u+1}\geq\ldots\geq\eta_n \quad\text{such that:}
\end{equation*}
\begin{equation*}
\sum\limits_{i=1}^n\xi_i=0,\qquad \sum\limits_{i=1}^n\eta_i=0,\qquad\text{and}\qquad \sum\limits_{i=1}^u\xi_i\leq\sum\limits_{i=1}^u\eta_i,
\end{equation*}
does there exist a smooth volume-preserving Anosov diffeomorphism $f\colon\mathbb T^n\rightarrow \mathbb T^n$ such that 
\begin{equation*}
(\lambda_{1,\Leb}(f),\ldots, \lambda_{n,\Leb}(f))=(\xi_1,\ldots,\xi_n) \qquad\text{and}\qquad (\lambda_{1,\nu_f}(f),\ldots, \lambda_{n,\nu_f}(f))=(\eta_1,\ldots,\eta_n)?
\end{equation*}
\end{question}

\begin{question}[Strong flexibility for two measures]\label{strong flexibility 2}
Let $L\colon \mathbb T^n\rightarrow\mathbb T^n$, $n\geq 2$, be a volume-preserving Anosov automorphism with the unstable index $u$. Given any two lists of numbers 
\begin{equation*}
\xi_1\geq\ldots\geq\xi_u>0>\xi_{u+1}\geq\ldots\xi_n \quad \text{and} \quad \eta_1\geq\ldots\geq \eta_u>0>\eta_{u+1}\geq\ldots\geq\eta_n \quad\text{such that:}
\end{equation*}
\begin{equation*}
\sum\limits_{i=1}^n\xi_i=0,\qquad \sum\limits_{i=1}^n\eta_i=0,\qquad\text{and}\qquad \sum\limits_{i=1}^u\xi_i<h_{top}(L)<\sum\limits_{i=1}^u\eta_i,
\end{equation*}
does there exist a smooth volume-preserving Anosov diffeomorphism $f\colon\mathbb T^n\rightarrow \mathbb T^n$ homotopic to $L$ such that 
\begin{equation*}
(\lambda_{1,\Leb}(f),\ldots, \lambda_{n,\Leb}(f))=(\xi_1,\ldots,\xi_n) \qquad\text{and}\qquad (\lambda_{1,\nu_f}(f),\ldots, \lambda_{n,\nu_f}(f))=(\eta_1,\ldots,\eta_n)?
\end{equation*}
\end{question}

Corollary 1.6 in \cite{BKRH} gives a positive answer to Question~\ref{weak flexibility 1} when formulated with strict inequalities among the given numbers. For $n=2$, it is folklore among specialists that the positive answer to Question~\ref{strong flexibility 1} was already known by A. Katok using a fairly straightforward global twist construction. The positive answer for $n=2$ and partial answer for $n>2$ follow from \cite[Theorem 1.5]{BKRH}. Moreover, Theorem 1.7 in \cite{BKRH} provides the full solution of Question~\ref{strong flexibility 1} for $\mathbb T^3$ with additional restrictions and the requirement of simple dominated splitting.

In this paper, we study Question~\ref{strong flexibility 2} and provide a positive answer for $n=2$ (see Theorem~\ref{main theorem}). This result can be considered as the two-dimensional version of \cite{E}. All in all, this work differs from \cite{BKRH} by considering flexibility for a pair of exponents instead of a single exponent and by using a more explicit construction. The main difficulty here lies in controlling the measure of maximal entropy. Essentially, the only way to estimate the measures of sets with respect to the measure of maximal entropy is to use Markov partitions which are difficult to understand explicitly for a general Anosov diffeomorphism.

\subsection{Formulation of the result}

Let $\mathbb T^2 = \mathbb R^2/\mathbb Z^2$. Let $f$ be a smooth area-preserving Anosov diffeomorphism on $\mathbb T^2$ homotopic to an Anosov automorphism $L$. We denote by $\lambda_{abs}(f)$ and $\lambda_{mme}(f)$ the positive Lyapunov exponents of $f$ with respect to $\Leb$ and to the measure of maximal entropy $\nu_f$, respectively.

Thus, summarizing Section~\ref{section: anosov} with $n=2$, we have two possibilities:

$$\text{\textit{either} }\qquad 0<\lambda_{abs}(f)<h_{top}(L)<\lambda_{mme}(f)$$
$$\text{\textit{or} }\qquad \lambda_{abs}(f)=\lambda_{mme}(f)=h_{top}(L).$$

Question~\ref{strong flexibility 2} asks if the above relations are the only relations between $\lambda_{abs}$ and $\lambda_{mme}$. Our main theorem shows that this is indeed the case.

\begin{thmintro}\label{main theorem}
Suppose $L_A$ is an Anosov automorphism on $\mathbb T^2$ which is induced by a matrix $A\in SL(2,\Z)$ with $|\text{trace}(A)|>2$. Let $\Lambda=h_{top}(L_A)$. For any $\Lambda_{abs},\Lambda_{mme}\in\mathbb R$ such that $0<\Lambda_{abs}<\Lambda<\Lambda_{mme}$, there exists a smooth area-preserving Anosov diffeomorphism $f\colon \mathbb T^2\rightarrow \mathbb T^2$ homotopic to $L_A$ such that $\lambda_{abs}(f)=\Lambda_{abs}$ and $\lambda_{mme}(f)=\Lambda_{mme}$. 
\end{thmintro}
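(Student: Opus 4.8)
The plan is to build $f$ by composing the linear automorphism $L_A$ with a sequence of explicit, localized area-preserving perturbations, controlling the two exponents by two essentially independent mechanisms. First I would pass to a conjugate of $L_A$ (or simply work with $A$ directly) so that $L_A$ has a convenient normal form near a fixed point, and set up coordinates adapted to the stable/unstable directions. The construction will involve two ingredients: (i) a \emph{global twist / slowdown near the fixed point} type deformation that changes $\lambda_{abs}$, and (ii) a modification supported on a small ball that is tuned to move $\lambda_{mme}$ in the opposite relative direction. The key quantitative handles are the formula $\lambda_{abs}(f)=\int_{\mathbb T^2}\log\|Df|_{E^u}\|\,d\Leb$ from \eqref{Lyapunov exponent through integral} together with Pesin's formula $h_{\Leb}(f)=\lambda_{abs}(f)$, and on the other side Ruelle's inequality $h_{top}(L_A)=h_{top}(f)\le\lambda_{mme}(f)$ with the variational principle pinning $h_{\Leb}(f)\le h_{top}(f)$. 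These force $\lambda_{abs}\le\Lambda\le\lambda_{mme}$ automatically, so I only need to realize the target values, not verify the inequalities.

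The main body of the argument I would organize as follows. \textbf{Step 1 (a one-parameter family lowering the absolute exponent).} Fix a small disk $D$ around a hyperbolic fixed point $p$ of $L_A$ and, using a smooth bump, slow down the dynamics in the unstable direction on $D$ while speeding it up elsewhere so as to preserve area; this is the standard Katok-type global twist. One shows that as the perturbation parameter runs over an interval, $\lambda_{abs}(f)$ varies continuously and can be made to take any value in $(0,\Lambda)$, while $f$ stays Anosov and homotopic to $L_A$ — Anosovness is preserved because the cone field estimates only degrade in a controlled, compact region. \textbf{Step 2 (raising the mme exponent independently).} Here the idea is to further compose with an area-preserving perturbation, supported on a tiny ball $B$ disjoint from $D$, that strongly expands in the unstable direction on $B$. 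Because $B$ is tiny, its $\Leb$-measure is negligible and $\lambda_{abs}$ barely moves; but the measure of maximal entropy $\nu_f$ gives $B$ comparatively large weight (it is the equilibrium state for the unstable Jacobian potential up to topological conjugacy bookkeeping), so $\lambda_{mme}(f)=\int\log\|Df|_{E^u}\|\,d\nu_f$ increases. Letting the strength of this perturbation grow, $\lambda_{mme}(f)\to\infty$, and by continuity it sweeps out all of $(\Lambda,\infty)$. \textbf{Step 3 (simultaneous realization).} Combine the two families into a two-parameter family $f_{s,t}$, show joint continuity of $(s,t)\mapsto(\lambda_{abs}(f_{s,t}),\lambda_{mme}(f_{s,t}))$, verify the map hits the corner values, and invoke a connectedness/intermediate-value argument (or a direct degree argument on the two-parameter rectangle) to conclude that the image contains the open quadrant $\{0<\Lambda_{abs}<\Lambda<\Lambda_{mme}\}$.

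The hard part, as the author flags in the introduction, is \textbf{controlling $\lambda_{mme}$}, i.e. Step 2 and the $\nu_f$-side of Step 3. Unlike $\Leb$, the measure of maximal entropy is not explicit, so to estimate $\nu_f(B)$ and the integral $\int\log\|Df|_{E^u}\|\,d\nu_f$ one must go through a \emph{Markov partition}: transfer $\nu_f$ to the symbolic model where it becomes the Parry measure (maximal-entropy Markov measure) on the subshift of finite type associated to $A$, and then relate the geometry of the perturbation's support to cylinders of the partition. This requires choosing the perturbation's support to be a union (or a controlled subset) of Markov rectangles, or at least compatible with a fixed Markov partition of $L_A$ that survives the perturbation via the topological conjugacy $h_f$ with $L_A$. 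Making the estimate quantitative — showing the increment of $\lambda_{mme}$ is bounded below by something like $c\cdot(\text{strength})$ uniformly — is where the real work lies; everything else (continuity of exponents in $C^1$ topology via structural stability and the integral formulas, preservation of area by careful choice of the bump functions, preservation of the Anosov property) is routine but must be assembled carefully. I would also double-check that the two perturbations can genuinely be decoupled: one needs the ball $B$ in Step 2 small enough that its interference with Step 1's estimate on $\lambda_{abs}$ is smaller than the continuity modulus already in hand, which is a matter of choosing scales in the right order ($D$ first, then the target $\lambda_{abs}$, then $B$ small relative to that).
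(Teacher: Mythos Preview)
Your overall architecture---two essentially independent mechanisms combined into a two-parameter family, then a degree/no-retraction argument on the rectangle---matches the paper's. But there is a genuine gap in Step~2, and it is exactly the place you flag as hard. Your stated reason that ``$\nu_f$ gives $B$ comparatively large weight (it is the equilibrium state for the unstable Jacobian potential)'' is incorrect: the measure of maximal entropy is the equilibrium state for the \emph{zero} potential; the equilibrium state for $-\log|Df|_{E^u}|$ is the SRB measure, which here is $\Leb$. So there is no a~priori reason a tiny ball $B$ carries non-negligible $\nu_f$-mass, and in fact if $f$ is $C^1$-close to $L_A$ then $\nu_f$ is weak*-close to $\Leb$ and $\nu_f(B)$ is small. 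Without a uniform lower bound on $\nu_f(\text{support of the fast expansion})$ that is independent of the strength parameter, your lower bound on $\lambda_{mme}$ collapses.

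The paper resolves this with a different geometry and a different order of operations. The fast-expansion perturbation (Construction~I) is supported not on a ball but on a thin vertical \emph{strip} of width~$\delta$, and the crucial estimate is that $\nu_f(\text{strip})\ge Q>0$ uniformly as $\delta\to 0$. This is obtained by building the Adler--Weiss Markov partition \emph{for the perturbed map} $F_{l,\delta}$ and showing (Lemmas~\ref{stable_size}--\ref{element_inside_for_smooth}) that an element of the refinement $\mathcal R^{n_0}$ at a \emph{fixed} level $n_0$ always fits inside the strip, so its Parry measure is bounded below independently of $\delta$; the point is that the stable foliation inside the strip becomes nearly vertical as $\delta\to 0$, so the strip, though thin in the $x$-direction, is ``tall'' in the direction that matters for the partition. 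Only \emph{after} this is the slow-down near the fixed point applied (Construction~II) to drive $\lambda_{abs}$ down; here one must additionally check that the slow-down does not destroy the lower bound on $\lambda_{mme}$, which is done by showing $\nu_f$ of the slow-down disk can be made arbitrarily small (Lemma~\ref{bound lambda_mme below with variables}). Your reversed order would force you to control $\nu_f$ of a region for a map that is already far from linear, which is harder. The Dacorogna--Moser step at the end converts the absolutely continuous invariant measure back to $\Leb$.
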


The shaded area plus its lower right corner in Figure \ref{fig:exponents_set} shows the set of all possible values for pairs $(\lambda_{abs},\lambda_{mme})$ in the setting of Theorem~\ref{main theorem}. 

\begin{figure}[ht]
\centering
\begin{subfigure}{.5\textwidth}
  \centering
  \begin{tikzpicture}
\draw [fill=gray!30, gray!30] (0,2) rectangle (2,6) node[align=center, pos=.5,black] {Region of \\ possible \\ pairs};
\draw[dashed] (0,2) -- (2,2);
\draw[dashed] (2,2) -- (2,6);
\node(x) at (2, 2) {$\bullet$};
\node[align=right](y) at (3, 1) {\footnotesize $(\Lambda,\Lambda)$};
\draw[->] (y) -- (x);
 \draw[->] (-0.5,0) -- (4,0) node at (4,-0.3) {\footnotesize $\lambda_{abs}$}; 
    \draw[->] (0,-0.5) -- (0,6) node[rotate = 90] at (-0.3, 5.5) {\footnotesize $\lambda_{mme}$};
\end{tikzpicture}

  \caption{Possible values of $(\lambda_{abs},\lambda_{mme})$}
  \label{fig:exponents_set}
\end{subfigure}%
\begin{subfigure}{.5\textwidth}
  \centering
  \begin{tikzpicture}
\draw[dashed] (0,2) -- (2,2);
\draw[dashed] (2,2) -- (2,6);
\node[align=right](y) at (3, 1) {\footnotesize  $(\Lambda,\Lambda)$};
\draw[->] (y) -- (x);
\draw[->] (-0.5,0) -- (4,0) node at (4,-0.3) {\footnotesize$\lambda_{abs}$}; 
\draw[->] (0,-0.5) -- (0,6) node[rotate = 90] at (-0.3, 5.5) {\footnotesize $\lambda_{mme}$};
\node[rotate = 90] at (-0.3,3.5) {\small\textbf {Side C}};
\node[rotate = 90] at (2.3,3.5) {\small\textbf {Side B}};
\node at (1, 1.7) {\small\textbf{Side A}};
\draw [fill=gray!30, line width=0.3mm] plot [smooth cycle, tension=0.1] coordinates {(2,2) (1.9,3) (1.95,4) (1.9,5) (1.5, 4.9) (0.09, 4.3) (0.05,4.02) (0.09,3) (0.05,2.1) (1.5, 2.05)(2,2)};
\draw[blue] (0.2, 1.7) -- (0.2, 6);
\draw[blue] (1.8, 1.7) -- (1.8, 6);
\draw[blue] (-0.1, 2.2) -- (2.1, 2.2);
\draw[blue] (-0.1, 3.9) -- (2.1, 3.9);
\draw [line width=0.3mm] plot [smooth] coordinates {(2,2) (1.9,3) (1.95,4) (1.9,5) (1.95,5.7)};
\draw[->,line width=0.5mm] (1.5, 3.5) -- (0.5, 3.5);
\draw[->,line width=0.5mm] (1.5, 2.5) -- (0.5, 2.5);
\node(x) at (2, 2) {$\bullet$};
\end{tikzpicture}

  \caption{Constructions from the proof}
  \label{fig:constr_set}
\end{subfigure}
\caption{}
\label{fig:exponents}
\end{figure}

Using that there is no retraction of a square onto its boundary and continuity of the Lyapunov exponents in the constructed family (see Remark~\ref{remark about continuity}), Theorem~\ref{main theorem} can be reduced to the following.

\begin{thmintro}\label{thm: decrease abs}
Suppose $L_A$ is an Anosov automorphism on $\mathbb T^2$ which is induced by a matrix $A\in SL(2,\Z)$ with $|\text{trace}(A)|>2$. Let $\Lambda=h_{top}(L_A)$. For any positive numbers $\gamma$, $S$ and $T$ such that $\Lambda<S<T$, there exists a smooth family $\{f_{s,t}\}$, where $(s,t)\in[0,1]\times[0,1]$, of area-preserving Anosov diffeomorphisms on $\mathbb T^2$ homotopic to $L_A$ such that the following hold:
\begin{enumerate}
\item $\Lambda-\gamma<\lambda_{abs}(f_{s,0})\leq\Lambda$ for all $s\in[0,1]$;\label{decrease 1}
\item $\lambda_{abs}(f_{s,1})<\gamma$ for all $s\in[0,1]$; \label{decrease 2}
\item $\lambda_{mme}(f_{0,t})<S$ for all $t\in[0,1]$;\label{decrease 3}
\item $\lambda_{mme}(f_{1,t})>T$ for all $t\in[0,1]$.\label{decrease 4}

\end{enumerate}
\end{thmintro}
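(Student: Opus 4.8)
The plan is to produce $\{f_{s,t}\}$ by performing on $L_A$ two localized surgeries, one governed by $s$ and one by $t$, supported in disjoint regions of $\mathbb T^2$: the $s$-surgery will inflate $\lambda_{mme}$ while barely moving $\lambda_{abs}$, and the $t$-surgery will deflate $\lambda_{abs}$ while keeping $\lambda_{mme}$ controlled. First I would fix a Markov partition of $L_A$ and refine it to a partition $\mathcal R=\{R_1,\dots,R_m\}$ with $m$ large; let $B$ be its transition matrix (Perron eigenvalue $e^{\Lambda}$, left/right Perron eigenvectors $u,v$) and $\mu_P$ the Parry measure, so that under the coding $\nu_{L_A}=\Leb$ corresponds to $\mu_P$ and $\mu_P([R_i])=\Leb(R_i)=u_iv_i/\langle u,v\rangle$ has size $\approx 1/m$. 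Pick two rectangles $R_1,R_2\in\mathcal R$ together with small neighbourhoods $U_1,U_2$ of $R_1$ (resp. $R_2$) and of the finitely many rectangles adjacent to them in the transition graph, with $\overline{U_1}\cap\overline{U_2}=\emptyset$ and $\Leb(U_1),\Leb(U_2)=O(1/m)$. Each surgery will be a composition with an area-preserving diffeomorphism isotopic to the identity, supported in $U_1$ resp. $U_2$, so that every $f_{s,t}$ is smooth, area-preserving, homotopic to $L_A$, and — provided the invariant cone fields are respected, which will be arranged — Anosov; the dependence on $(s,t)$ is smooth by construction, with $f_{s,0}=L_A$ for $s$ near $0$ and no $t$-surgery for $t$ near $0$.

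The $s$-surgery is the new ingredient. I would \emph{thin} $R_1$ in the unstable direction, replacing it by a Markov rectangle of unstable width $w_s$ with $w_0$ the original width and $w_1$ to be chosen very small, re-gluing inside $U_1$ a smooth area-preserving map that still has transition matrix $B$. The decisive bookkeeping is: (i) the $\nu$-mass of the thinned rectangle, being a combinatorial quantity, stays equal to $\mu_P([R_1])\approx 1/m$ for all $s$; (ii) to map across this rectangle the unstable expansion on it must be of order $w_s^{-1}$ while its Lebesgue measure shrinks to order $w_s$, so it contributes $O\!\big(w_s\log(1/w_s)\big)\to 0$ to $\lambda_{abs}(f_{s,0})=\int\log\|Df_{s,0}|_{E^u}\|\,d\Leb$ but $\gtrsim\mu_P([R_1])\log(1/w_s)\to\infty$ to $\lambda_{mme}(f_{s,0})=\int\log\|Df_{s,0}|_{E^u}\|\,d\nu_{f_{s,0}}$; (iii) thinning $R_1$ forces its $\approx e^{\Lambda}$-many in-neighbours to widen slightly, lowering their expansion, but these have total Lebesgue measure $O(1/m)$, so the resulting drop in $\lambda_{abs}(f_{s,0})$ is $O(1/m)$. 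Choosing $m$ large enough that this drop and $\Lambda\cdot\Leb(U_1)$ are both $<\gamma$, the general bound $\lambda_{abs}(f)\le h_{top}(L_A)=\Lambda$ together with (ii)--(iii) gives $\Lambda-\gamma<\lambda_{abs}(f_{s,0})\le\Lambda$, which is (\ref{decrease 1}); and taking $w_1$ small enough forces $\lambda_{mme}(f_{1,0})>T$.

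The $t$-surgery is a Katok-type slow-down in $U_2$: widen a Markov rectangle around $R_2$ so that its unstable expansion tends to $1$ as $t\to 1$, putting the compensating large expansion on thin rectangles coding deep cylinders (of exponentially small Parry mass), and keeping everything combinatorially under control. As $t\to1$ the expansion is close to $1$ on a set of Lebesgue measure close to $1$, so $\lambda_{abs}(f_{s,1})\to0$; since the $U_1$-surgery contributes at most $O\!\big(w_s\log(1/w_s)\big)<\gamma/2$ to $\lambda_{abs}$ uniformly in $s$, we get $\lambda_{abs}(f_{s,1})<\gamma$ for all $s$, which is (\ref{decrease 2}). For the measure of maximal entropy along the vertical edges: when $s=0$ there is no inflation, and because the fast region of $f_{0,t}$ carries only exponentially small $\nu$-mass, one checks — this is where Ruelle's inequality $\Lambda\le\lambda_{mme}$ is seen to be nearly an equality for this construction — that $\lambda_{mme}(f_{0,t})$ stays below $S$ for all $t$, which is (\ref{decrease 3}); when $s=1$ the contribution $\gtrsim\mu_P([R_1])\log(1/w_1)$ of the inflated rectangle persists, since the $t$-surgery is supported away from $U_1$ and keeps $\nu_{f_{1,t}}([R_1])$ bounded below, so $\lambda_{mme}(f_{1,t})>T$ for all $t$, which is (\ref{decrease 4}).

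\textbf{The main obstacle} is exactly the one flagged in the introduction, controlling $\nu_{f_{s,t}}$: the only usable handle on it is the symbolic description, so both surgeries must be carried out explicitly enough that each $f_{s,t}$ still carries a Markov partition compatible with the coding of $L_A$ — whence the $\nu_{f_{s,t}}$-masses of the relevant rectangles equal prescribed Parry values — and then $\int\log\|Df_{s,t}|_{E^u}\|\,d\nu_{f_{s,t}}$ must be estimated from above on the edge $s=0$ (to stay below $S$) and from below on the edge $s=1$ (to stay above $T$). The deep-cylinder calibration of the $t$-surgery is precisely what makes the upper estimate possible, and getting it right is the technical heart of the argument. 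Secondary technical points: $E^u_{f_{s,t}}$ is a global object that changes even where the map is unperturbed, so the localization estimates above have to be made rigorous via adapted metrics and cone-field estimates rather than naively; and one must verify that the widening and thinning preserve hyperbolicity and depend smoothly on $(s,t)$. Granting all this, $\{f_{s,t}\}$ satisfies (\ref{decrease 1})--(\ref{decrease 4}), and its continuity in $(s,t)$ — needed for the reduction of Theorem~\ref{main theorem} — is the content of Remark~\ref{remark about continuity}.
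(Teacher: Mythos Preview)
Your high-level architecture --- two disjoint local surgeries, one to inflate $\lambda_{mme}$ via a thin fast region of positive $\nu$-mass, one to deflate $\lambda_{abs}$ via a Katok-type slow-down, with $\nu$ controlled symbolically --- is exactly the paper's strategy. The $s$-surgery you describe is a symbolic rephrasing of the paper's Construction~I (explicit twist maps $F^w_{l,\delta}$ on a strip, with a thin fast sub-strip of width $\delta$); the paper does not assume the Markov combinatorics persist but rebuilds a Markov partition for each perturbed map and proves that some refined rectangle lies inside the fast strip (Lemmas~\ref{element inside}--\ref{element_inside_for_smooth}), which is precisely the justification your claim~(i) would need.

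There is, however, a genuine gap in your $t$-surgery. You require simultaneously that (a) each $f_{s,t}$ is \emph{Lebesgue}-preserving, (b) the $t$-surgery is supported in a fixed set $U_2$ with $\Leb(U_2)=O(1/m)$, and (c) at $t=1$ the unstable expansion is close to $1$ on a set of Lebesgue measure close to $1$. These are incompatible: if $f_{s,t}=f_{s,0}$ on $\mathbb T^2\setminus U_2$ and $f_{s,t}$ is Anosov, then $\log\|Df_{s,t}|_{E^u}\|$ is bounded below by a positive constant on that set (cone estimates give this uniformly even though $E^u$ moves), so by Pesin's formula $\lambda_{abs}(f_{s,t})\geq c\,(1-\Leb(U_2))$ cannot be pushed below $\gamma$ once $\gamma$ is small. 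Your sentence ``the expansion is close to $1$ on a set of Lebesgue measure close to $1$'' already contradicts ``supported in $U_2$''.

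The paper resolves this by \emph{decoupling} the localization from the area-preservation. Construction~II is a genuine Katok slow-down in a small disk $D_{r_1}$ around the fixed point $(0,0)$ (kept disjoint from the twist strip of Construction~I), and the resulting maps $G_{s,\eta}$ preserve not Lebesgue but an absolutely continuous measure $\mu_\eta$ whose density blows up at $(0,0)$; it is $\mu_\eta(D_\rho)\to 1$ (not $\Leb(D_\rho)$) that forces $\lambda_{\mu_\eta}(G_{s,\eta})\to 0$ (Lemma~\ref{lemma: decreasing abs path}). Because the slow-down is still localized away from the fast strip, the Markov-partition estimates for $\nu$ survive and give the bounds on $\lambda_{mme}$ along the edges $s=0$ and $s=1$ (Lemmas~\ref{lemma: upper bound on lambda mme start from linear} and~\ref{bound lambda_mme below with variables}). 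Only \emph{afterwards} is area-preservation restored by a global smooth conjugacy via the Dacorogna--Moser theorem, which leaves all Lyapunov exponents unchanged; this conjugacy is what spreads the slow region over most of $\mathbb T^2$ in Lebesgue measure, and it is not a localized surgery. If you insert this missing step --- drop Lebesgue-preservation during the $t$-construction and recover it by a final smooth conjugacy --- your outline becomes the paper's proof.
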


\begin{remark}\label{remark about continuity}
In a smooth family of Anosov area-preserving diffeomorphisms on $\mathbb T^2$, the Lyapunov exponents with respect to $\Leb$ and the measure of maximal entropy vary continously. For the Lebesgue measure, this follows immediately from \eqref{Lyapunov exponent through integral} and the fact that the unstable distribution $E^u$ varies continuously. In addition, the measure of maximal entropy depends continuously on the dynamics in the weak* topology. To see this, we can use the fact \cite[Theorem 1]{Moser} that for a smooth family of Anosov area-preserving diffeomorphisms, the topological conjugacy to $L_A$ is continuous in the parameters of the family. Moreover, the measure of maximal entropy is mapped to the measure of maximal entropy by the conjugacy. For the families we consider, the continuity of the measure of maximal entropy can alternatively be seen directly using the constructed Markov partition (see Sections~\ref{section: estimate_mme} and~\ref{section: mme in II}). 

\end{remark}

\subsection{Outline of the proof}

To prove Theorem~\ref{thm: decrease abs}, we construct (large) smooth (area-preserving) homotopic deformations of Anosov automorphisms, i.e., deformations preserving the homotopy class and estimate $\lambda_{abs}$ and $\lambda_{mme}$ of the resulting Anosov diffeomorphisms. We refer to Figure \ref{fig:constr_set} in what follows.

\begin{itemize}
\item Without loss of generality, we assume that $\text{trace}(A)>2$. It is enough to prove Theorem~\ref{thm: decrease abs} in that case because of the following. Assume that $B\in SL(2,\Z)$ with $\text{trace}(B)<-2$. Then, $-B\in SL(2,\Z)$, $\text{trace}(-B) > 2$, and $h_{top}(L_B)=h_{top}(L_{-B})=\Lambda$. Assume that $\Lambda_{abs},\Lambda_{mme}\in\mathbb R$ such that $0<\Lambda_{abs}<\Lambda<\Lambda_{mme}$ and there exists an area-preserving Anosov diffeomorphism $f\colon \mathbb T^2\rightarrow\mathbb T^2$ homotopic to $L_{-B}$ such that $\lambda_{abs}(f)=\Lambda_{abs}$ and $\lambda_{mme}(f)=\Lambda_{mme}$. Let $-I = \begin{pmatrix} -1 & 0\\ 0 & -1\end{pmatrix}$. Then, $L_{-I}\circ f\colon \mathbb T^2\rightarrow\mathbb T^2$ is an area-preserving Anosov diffeomorphism homotopic to $L_{B}$ such that  $\lambda_{abs}(L_{-I}\circ f)=\Lambda_{abs}$ and $\lambda_{mme}(L_{-I}\circ f)=\Lambda_{mme}$. 

\item In Section~\ref{section: increase max}, we describe a homotopic deformation that produces for any given positive number $H$ and any small positive number $\gamma$ a curve in the set of possible values of $(\lambda_{abs}, \lambda_{mme})$ $\gamma$-close to Side B with one endpoint at $(\Lambda,\Lambda)$ and the other endpoint corresponding to a smooth area-preserving Anosov diffeomorphims with $\lambda_{mme}$ larger than $H$. The resulting diffeomorphisms have a large twist on a thin strip. In Section~\ref{section: estimate_abs}, we find a lower bound on $\lambda_{abs}$. In Section~\ref{section: estimate_mme}, we provide a lower bound on $\lambda_{mme}$ by controlling the measure of maximal entropy of the constructed diffeomorphisms through Markov partitions. The following theorem summarizes the result in Section~\ref{section: increase max}.
\end{itemize}

\begin{thmintro}(Theorem \ref{thm: increase max})\label{increase max in intro}
Suppose $L_A$ is an Anosov automorphism on $\mathbb T^2$ which is induced by a matrix $A\in SL(2,\Z)$ with $\text{trace}(A)>2$. Let $\Lambda=h_{top}(L_A)$ and $H>0$. For any positive number $\gamma$, there exists a smooth family $\{g_s\}_{s\in[0,1]}$ of area-preserving Anosov diffeomorphisms on $\mathbb T^2$ homotopic to $L_A$ such that $g_0=L_A$ and the following hold:
\begin{enumerate}[label=(\Alph*)]
\item $g_s=L_A$ in a neighborhood of $(0,0)$ for all $s\in[0,1]$;
\item $\Lambda-\gamma<\lambda_{abs}(g_s)\leq \Lambda$ for all $s\in[0,1]$;
\item $\lambda_{mme}(g_1)>H$.
\end{enumerate} 
\end{thmintro}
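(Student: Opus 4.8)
The plan is to realize the large value of $\lambda_{mme}$ by a single, explicit, area-preserving "twist" on a thin vertical strip, composed with $L_A$, and then to carry out two independent estimates: a soft lower bound on $\lambda_{abs}$ (so the curve stays near Side~B, i.e. $\lambda_{abs}$ close to $\Lambda$) and a more delicate lower bound on $\lambda_{mme}$ via a Markov partition.

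First I would set up coordinates adapted to $L_A$: since $\mathrm{trace}(A)>2$, $A$ has eigenvalues $e^\Lambda>1>e^{-\Lambda}>0$ with eigendirections $E^u,E^s$; I would pick a fundamental domain and a thin strip $P_\delta$ of width $\delta$ transverse to the unstable direction, disjoint from a neighborhood of $(0,0)$, and define $g_s = \phi_s\circ L_A$ where $\phi_s$ is an area-preserving diffeomorphism supported in (the union of $\Z^2$-translates of) $P_\delta$ that performs a shear of strength $\sim s\cdot K$ along the unstable foliation inside the strip. The shear is chosen so that $g_s$ is $C^1$-close to $L_A$ \emph{outside} the strip, hence uniformly hyperbolic there, and inside the strip the twist is large but the cone condition is preserved because shearing \emph{along} $E^u$ only strengthens expansion of the $E^u$-cone (one must check the induced contraction on the stable cone, which is where area-preservation and the transversality of the strip to $E^u$ are used). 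Verifying that $\{g_s\}$ is a smooth family of Anosov diffeomorphisms homotopic to $L_A$, equal to $L_A$ near $(0,0)$ and near $s=0$, is then a cone-field computation; property~(A) is immediate by construction.

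For property~(B), the lower bound $\lambda_{abs}(g_s)>\Lambda-\gamma$: I would use \eqref{Lyapunov exponent through integral}, $\lambda_{abs}(g_s)=\int_{\mathbb T^2}\log\|Dg_s|_{E^u_s}\|\,d\Leb$, together with the upper bound $\lambda_{abs}(g_s)\le h_{top}(g_s)=\Lambda$ from the entropy inequalities in Section~\ref{section: anosov}. The point is that $Dg_s$ agrees with $DL_A$ outside the measure-$O(\delta)$ strip, and inside the strip $\log\|Dg_s|_{E^u_s}\|$ is \emph{bounded below} (a shear along $E^u$ does not decrease the unstable expansion below roughly $\log\|DL_A|_{E^u}\|$ minus a controlled amount, and never below $0$ since it is Anosov); choosing $\delta$ small makes the deficit from the strip less than $\gamma$. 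This is the easy estimate.

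The hard part will be property~(C), the lower bound on $\lambda_{mme}(g_1)$, since—as the introduction stresses—$\nu_{g_1}$-measures of sets are only accessible through a Markov partition. The plan is to build an \emph{explicit} Markov partition for $g_1$ by refining the standard geometric Markov partition of $L_A$: the twist subdivides one (or a few) of the rectangles into many thin subrectangles that are stretched across the others, so the transition matrix $M$ of $g_1$ dominates (entrywise, after suitable identification) a matrix whose spectral radius grows with the twist strength $K$. Since $h_{top}(g_1)=\Lambda$ is \emph{fixed}, the growth must instead be captured as follows: by Ruelle/Pesin-type reasoning on the symbolic side, $\lambda_{mme}(g_1)=\int \log\|Dg_1|_{E^u}\|\,d\nu_{g_1}$, and $\nu_{g_1}$ is the Parry (maximal-entropy) measure of the subshift of finite type given by $M$; the large twist forces the $\nu_{g_1}$-typical orbit to spend a definite fraction of time in the symbols corresponding to the strongly-sheared subrectangles, where $\log\|Dg_1|_{E^u}\|\gtrsim \log K$. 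Quantifying "a definite fraction of time" requires estimating entries of the Parry measure's stationary vector, i.e. Perron eigenvector coordinates of $M$, which is the genuine technical obstacle; I would handle it by a comparison/coarse-graining argument reducing $M$ to a small explicit matrix whose Perron data can be computed by hand, and then let $K\to\infty$ (equivalently $s\to 1$ with the twist strength calibrated to $H$) to push $\lambda_{mme}(g_1)$ above $H$. Smoothness and continuity of $\nu_{g_s}$ in $s$—needed later for Theorem~\ref{thm: decrease abs} via Remark~\ref{remark about continuity}—come for free from the persistence of this combinatorial Markov partition under the deformation.
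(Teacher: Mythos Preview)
Your overall architecture matches the paper's: compose $L_A$ with an area-preserving twist supported on a thin strip disjoint from a neighborhood of $(0,0)$, get (B) from the small Lebesgue measure of the strip together with a uniform cone-expansion lower bound, and get (C) by showing the measure of maximal entropy gives a \emph{definite, twist-strength--independent} fraction of mass to the high-expansion region. Your treatment of (A) and (B) is essentially what the paper does in Section~\ref{section: estimate_abs} (the paper phrases it as a block decomposition of orbits into strip/non-strip runs plus Birkhoff, rather than directly integrating $\log\|Dg_s|_{E^u}\|$, but the content is the same).

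The divergence, and the place where your proposal is underspecified, is the mechanism for the ``definite fraction'' in (C). You propose to read it off the Perron eigenvector of the transition matrix after a coarse-graining. The difficulty is that as the twist strength $K$ grows (equivalently, as the fast substrip shrinks), the combinatorics of your Markov partition changes with $K$, and a coarse-graining that lumps fast and slow symbols does \emph{not} in general send the Parry measure to the Parry measure of the lumped chain; so a uniform-in-$K$ lower bound on the fast mass is not obviously available by that route. The paper sidesteps the eigenvector computation entirely with a geometric observation (Lemma~\ref{element inside}): inside the fast strip $S_2(\delta)$ the stable direction of $F_{l,\delta}$ becomes nearly vertical as $\delta\to 0$, so a stable segment crossing $S_2(\delta)$ has vertical extent $\gtrsim \tfrac12\beta_0 l$, a quantity independent of $\delta$. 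Combined with uniform shrinking of the $s$- and $u$-sizes of the refined partition (Lemmas~\ref{stable_size}--\ref{unstable_size}), this forces some element of a \emph{fixed} refinement $\mathcal R^{n_0}$ to lie inside $S_2(\delta)$ for all small $\delta$; its Parry mass then gives the uniform $Q>0$ without ever touching the Perron eigenvector. If you can make your coarse-graining argument give a $K$-uniform bound it would be an alternative, but as written the paper's geometric route is the one that actually closes the gap.
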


\begin{itemize}
\item In Section~\ref{section: decrease abs}, starting from the Anosov diffeomorphisms from the first construction with $\lambda_{mme}$ in some range of values, we modify them in a smooth way by a slow-down deformation near a fixed point to get Anosov diffeomorphisms realizing a curve in the set of possible pairs $(\lambda_{abs}, \lambda_{mme})$ arbitrarily close to Side C (see Lemma~\ref{lemma: decreasing abs path}). In this construction, we are able to keep the lower boundary of the realized pairs of exponents arbitrarily close to Side A (see Lemma~\ref{lemma: upper bound on lambda mme start from linear}) and the upper boundary above a line $\lambda_{mme}=T$ (see Lemma~\ref{bound lambda_mme below with variables}), where $T$ depends on the initial range of values of $\lambda_{mme}$ for the diffeomorphisms coming from the first construction.  As a result, we produce a two-parametric family of Anosov diffeomorphisms coming from homotopic deformations covering any given rectangle within the semi-infinite strip that is the set of possible values of $(\lambda_{abs},\lambda_{mme})$. The following theorem summarizes the result in Section~\ref{section: decrease abs}.
\end{itemize}

\begin{thmintro}(Theorem~\ref{decrease abs for family})\label{decrease abs in intro}
Suppose $L_A$ and $\Lambda$ as in Theorem~\ref{increase max in intro}. For any $H$ such that $\Lambda<H$ and positive number $\gamma$, let $\{g_s\}_{s\in[0,1]}$ be a smooth family of area-preserving Anosov diffeomorphisms on $\mathbb T^2$ homotopic to $L_A$ from Theorem~\ref{increase max in intro} applied for $\gamma$ and $H$ with lower bound on $\lambda_{mme}(g_1)$ coming from Lemma~\ref{lemma: bound on mme in I} being larger than $H$. Then, there exists a constant $\tilde C$ such that for any $\sigma>0, S>\Lambda$ there exist 
a smooth family $\{f_{s,t}\}_{(s,t)\in[0,1]\times[0,1]}$ of Anosov diffeomorphisms on $\mathbb T^2$ homotopic to $L_A$ such that:
\begin{enumerate}[label=(\Alph*)]
\item $f_{s,0}=g_s$ for all $s\in[0,1]$;
\item $f_{s,t}$ preserves a probability measure $\mu_{s,t}$ which is absolutely continuous with respect to the Lebesgue measure;
\item $\lambda_{abs}(f_{s,1})<\gamma$ for all $s\in[0,1]$;
\item $\lambda_{mme}(f_{0,t})<S$ for all $t\in[0,1]$;
\item $\lambda_{mme}(f_{1,t})\geq H+\tilde C\sigma$.
\end{enumerate}
\end{thmintro}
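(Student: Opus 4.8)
The plan is to leave the family $\{g_s\}$ unchanged at $t=0$ and, as $t$ runs from $0$ to $1$, to compose each $g_s$ with a ``slow-down'' deformation concentrated on a large neighborhood $U$ of the fixed point $(0,0)$. By property (A) of Theorem~\ref{increase max in intro}, and since the diffeomorphisms produced there agree with $L_A$ off a thin strip, one may take $U$ of Lebesgue measure close to $1$ with $g_s\equiv L_A$ on $U$ (a neighborhood of positive measure is needed because a perturbation supported on a small set cannot pull $\lambda_{abs}$ below $\approx\Lambda$). In linear coordinates on $U$ in which $L_A=\mathrm{diag}(e^{\Lambda},e^{-\Lambda})$, I would write $L_A|_U$ as the time-one map of the flow with Hamiltonian $\Lambda xy$ and reparametrize the generator to $\rho_{s,t,\sigma}(xy)\,(\Lambda x\,\partial_x-\Lambda y\,\partial_y)$, where $\rho_{s,t,\sigma}$ is a smooth positive function, identically $1$ near $\partial U$ and as small as we wish (but bounded below by a positive constant, its strength governed by $\sigma$) on the bulk of $U$. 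Since $\rho$ depends only on the Hamiltonian, the time-one map of the new flow is area-preserving and equals $L_A$ near $\partial U$, so it patches with $g_s$ off $U$ to a smooth family $\{f_{s,t}\}$ with $f_{s,0}=g_s$. If it is cleaner to perform the slow-down by a non-area-preserving modification, one then conjugates by a Moser-type diffeomorphism so that $f_{s,t}$ preserves an a.c.\ measure $\mu_{s,t}$, which changes neither $\lambda_{abs}$ nor $\lambda_{mme}$; this is (B).

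I would first verify that each $f_{s,t}$ is Anosov, homotopic to $L_A$, and smooth in $(s,t)$. The slow-down only rescales the instantaneous expansion/contraction rates by a positive factor $\le 1$ and leaves the strong $L_A$-cone field invariant on $U$, so patching this with the cone fields already controlled for $g_s$ gives uniform hyperbolicity over the compact parameter square; $f_{s,t}$ is isotopic to $g_s$ rel $\mathbb T^2\setminus U$, so the homotopy class is preserved. For $\lambda_{abs}$: using that $\mu_{s,t}$ is ergodic and a.c., $\lambda_{abs}(f_{s,t})=\int\log\|Df_{s,t}|_{E^u}\|\,d\mu_{s,t}$; on the bulk of $U$ the unstable expansion is within a tiny factor of $1$, while off it (the strip, the transition collar, $\mathbb T^2\setminus U$) the integrand is bounded on a set of arbitrarily small $\mu_{s,t}$-measure, so the integral is $<\gamma$ at $t=1$, which is (C); at $t=0$ it equals $\lambda_{abs}(g_s)\in(\Lambda-\gamma,\Lambda]$. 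This is essentially Lemma~\ref{lemma: decreasing abs path}.

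The main obstacle is controlling $\lambda_{mme}(f_{s,t})=\int\log\|Df_{s,t}|_{E^u}\|\,d\nu_{f_{s,t}}$, since the only handle on $\nu_{f_{s,t}}$ is through a Markov partition, which must now resolve both the thin-strip twist inherited from $g_s$ and the slow-down bump on $U$. I would construct such a partition and bound its transition combinatorics: the slow region of $U$ has low symbolic complexity, so $\nu_{f_{s,t}}$ gives it exponentially little mass, and the value of $\log\|Df_{s,t}|_{E^u}\|$ on each rectangle is read from the partition. For $s=0$ (where $g_0=L_A$, so the only change is the slow-down) this makes $\lambda_{mme}(f_{0,t})$ as close to $\Lambda$ as desired, in particular $<S$, giving (D) (Lemma~\ref{lemma: upper bound on lambda mme start from linear}). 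For $s=1$, the large twist that forced $\lambda_{mme}(g_1)$ above the bound of Lemma~\ref{lemma: bound on mme in I} survives the slow-down up to an error controlled by $\sigma$: the rectangles meeting the strip retain their large derivative and a definite $\nu$-mass, whence $\lambda_{mme}(f_{1,t})\ge H+\tilde C\sigma$ for a constant $\tilde C$ depending only on $L_A,\gamma,H$ (Lemma~\ref{bound lambda_mme below with variables}), which is (E). I expect the uniform (in $s,t,\sigma$) estimates on the $\nu_{f_{s,t}}$-mass of the ``large-derivative'' rectangles near the strip and the ``small-derivative'' rectangles in $U$ to be the technical heart of the argument.

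Finally, continuity of $\lambda_{abs}$ and $\lambda_{mme}$ along the family (Remark~\ref{remark about continuity}) together with the uniform Anosov bounds lets one assemble $\{f_{s,t}\}_{(s,t)\in[0,1]\times[0,1]}$ with properties (A)--(E).
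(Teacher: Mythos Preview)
Your high-level strategy matches the paper's: apply a slow-down deformation near the fixed point $(0,0)$ and invoke Lemmas~\ref{lemma: decreasing abs path}, \ref{lemma: upper bound on lambda mme start from linear}, and \ref{bound lambda_mme below with variables} for (C), (D), (E). However, the mechanism you describe is not the one the paper uses, and your version has a genuine gap.

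The paper performs the slow-down on a \emph{small} disk $D_{r_0}$ (with $r_0$ chosen arbitrarily small), using a \emph{radial} factor $\psi_\eta(s_1^2+s_2^2)$ rather than a function of the first integral $s_1 s_2$. The resulting time-one map does \emph{not} preserve Lebesgue; its invariant absolutely continuous measure is $d\mu_\eta=K_\eta^{-1}\psi_\eta^{-1}\,d\Leb$, whose density blows up at $(0,0)$ as $\eta\to 0$. This concentration of $\mu_\eta$ at the origin is precisely what drives $\lambda_{abs}\to 0$ in Lemma~\ref{lemma: decreasing abs path}: almost all the $\mu_\eta$-mass sits where $DG_{s,\eta}$ is close to the identity. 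Your parenthetical assertion that ``a perturbation supported on a small set cannot pull $\lambda_{abs}$ below $\approx\Lambda$'' is therefore false in this setting and is the source of your divergence from the paper.

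The smallness of $D_{r_0}$ is not incidental; it is exactly what makes (D) and (E) tractable. For (D), Lemma~\ref{lemma: upper bound on lambda mme start from linear} bounds pointwise forward Lyapunov exponents by $\Lambda+\chi$ by showing orbits spend at most a bounded time $T_0$ (independent of $r_0,\eta$) in the annulus $D_{r_1}\setminus D_{r_0/2\Lambda}$, while inside $D_{r_0/2\Lambda}$ one has $\|DG\|_{op}\le e^\Lambda$; shrinking $r_0$ forces long return times outside $D_{r_1}$, so the annular contribution becomes negligible. For (E), Lemma~\ref{bound lambda_mme below with variables} refines a Markov partition around a periodic point $q\neq(0,0)$ to find a single rectangle $R_\sigma\ni(0,0)$ with $\nu_{f_{1,0}}(R_\sigma)<\sigma$, then chooses $r_0$ so that $D_{r_1}\subset R_\sigma$; since the slow-down is supported in $D_{r_1}$, the conjugacy to $f_{1,0}$ fixes the Markov combinatorics and $\nu_{G_{1,\eta}}(D_{r_\sigma})<\sigma$ uniformly in $\eta$, so the twist strip still carries $\nu$-mass at least $\bar Q-\sigma$.

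Your large-$U$ proposal runs into several obstacles. The linear coordinates in which $L_A=\mathrm{diag}(e^\Lambda,e^{-\Lambda})$ and the flow generating it make sense only on an embedded disk, not on a set of Lebesgue measure close to $1$ on $\mathbb T^2$. A factor $\rho(xy)$ depending only on the first integral cannot simultaneously equal $1$ near $\partial U$ and be small on the bulk of $U$, since level sets of $xy$ are unbounded hyperbolas. And your heuristic for (D)---that the slow region has ``low symbolic complexity'' so $\nu$ gives it exponentially little mass---fails when $U$ is large: for $s=0$ the map is a perturbation of $L_A$, the conjugacy is $C^0$-close to the identity, and $\nu_{f_{0,t}}(U)\approx\Leb(U)\approx 1$.
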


\begin{itemize}
\item Let $\{f_{s,t}\}_{(s,t)\in[0,1]\times[0,1]}$ be the family of Anosov diffeomorphisms in Theorem~\ref{decrease abs in intro}. By the Dacorogna-Moser theorem \cite[Theorem, Appendix A]{HJJ}, there exists a $C^{\infty}$ family $\{\Psi_{s,t}\}$ of $C^{\infty}$ diffeomorphisms of $\mathbb T^2$ satisfying $\Psi_{s,t}^*\mu_{s,t} = \Leb$. Let $\tilde{f}_{s,t} = \Psi_{s,t} f_{s,t}\Psi_{s,t}^{-1}$, then $\left\{\tilde{f}_{s,t}\right\}$ is a $C^\infty$ family of Anosov diffeomorphisms on $\mathbb T^2$ that preserve $\Leb$. Since the conjugacy is smooth, we obtain $\lambda_{abs}(\tilde{f}_{s,t})=\lambda_{abs}(f_{s,t})$ and $\lambda_{mme}(\tilde{f}_{s,t})=\lambda_{mme}(f_{s,t})$. Therefore, we obtain Theorem~\ref{thm: decrease abs} if we applied Theorem~\ref{decrease abs in intro} for $H=2T$ and sufficiently small $\sigma$.

\end{itemize}

\subsection{Further questions for Anosov diffeomorphisms of $\mathbb T^2$}

Interestingly, Theorem~\ref{main theorem} can be reformulated as a statement on flexibility for the pressure function among smooth area-preserving Anosov diffeomorphism homotopic to a fixed Anosov automorphism as follows.

Let $\phi^f_t(x) = -t\log\left|Df|_{E_u(x)}\right|$ for any $x\in\mathbb T^2$. This is called the \textit{geometric potential}. The \textit{pressure function} for the potential $\phi_t^f$ is defined by
\begin{equation*}
P(\phi^f_t) = \sup\limits_{\mu}\left(h_\mu(f)+\int_{\mathbb T^2}\phi^f_t\,d\mu\right),
\end{equation*}
where the supremum is taken over all $f$-invariant probability measures on $\mathbb T^2$ and $h_{\mu}(f)$ denotes the measure theoretical entropy of $f$ with respect to $\mu$. It is known that $P(\phi^f_0) = h_{top}(f)$ and $P(\phi^f_1) = 0$. Also, $P(\phi^f_t)$ is a convex real analytic function of $t$ (see for example \cite[Sections 0.2 and 4.6]{RuelleThF} and \cite{BG}). Since $\int_{\mathbb T^2}\phi^f_t\,d\mu$ becomes a dominated term as $t$ tends to $\pm \infty$, we have that $P(\phi^f_t)$ has asymptotes as $t\rightarrow\pm\infty$. Moreover, $\frac{d}{dt}P(\phi^f_t)|_{t=0} = -\lambda_{mme}(f)$ and $\frac{d}{dt}P(\phi^f_t)|_{t=1} = -\lambda_{abs}(f)$.
Thus, Theorem~\ref{main theorem} shows that we can vary the derivatives of the pressure function at $t=0$ and $t=1$. As a result, a more general flexibility question can be formulated in the setting of Theorem~\ref{main theorem}.

\begin{question}
Let $L$ be an Anosov automorphism on $\mathbb T^2$. Given a strictly convex real analytic function $F\colon \mathbb R\rightarrow\mathbb R$ such that $F(0)=h_{top}(L)$, $F(1)=0$, $\frac{dF}{dt}|_{t=0}<-h_{top}(L)$, $\frac{dF}{dt}|_{t=1}\in(-h_{top}(L),0)$, and $F(t)$ has asymptotes as $t\rightarrow\pm\infty$. Does there exist a smooth area-preserving Anosov diffeomorphism $f$ homotopic to $L$ such that $P(\phi^f_t) = F(t)$?
\end{question}

The answer to the above question will require different techniques than presented in this paper. If the answer is negative then it would be interesting to determine which extra conditions on the function must be satisfied. For example, do the higher derivatives of the pressure function \cite{KotaniSunada} provide any additional restrictions? Is there a finite list of conditions that must be added in order to obtain flexibility?

We can also consider a rigidity problem connected to the pressure function. Let $f$ be a smooth area-preserving Anosov diffeomorphism homotopic to an Anosov automorphism $L$. By work of de la Llave \cite{delaLlave} and of Marco and Moriy\'{o}n \cite{MM}, we have that if $\lambda_{abs}(f)=h_{top}(L)$, then $f$ and $L$ are $C^\infty$ conjugate. By the properties of the pressure functions discussed above, we have that if $P(\phi^f_t) = P(\phi^L_t)$ for all $t\in\mathbb R$, then $f$ and $L$ are $C^\infty$ conjugate. A natural question is if $L$ can be replaced by any smooth area-preserving Anosov diffeomorphism.

\begin{question}
Let $f$ and $g$ be smooth area-preserving Anosov diffeomorphisms on $\mathbb T^2$ that are homotopic. Assume $P(\phi^f_t) = P(\phi^g_t)$ for all $t\in\mathbb R$. Does this imply that $f$ and $g$ are $C^\infty$ conjugate?
\end{question}

\vspace{0.5cm}

\textbf{Acknowledgements.} The author would like to thank Andrey Gogolev for helpful discussions and Vaughn Climenhaga for pointing out some references. Question 6 was originally posed by Federico Rodriguez Hertz. The author is also grateful to Anatole Katok for suggesting the problem. Much of this work was completed while the author was at The Ohio State University. She is also grateful for NSF grant 1642548, which supported travel to present the result in this paper. Additionally, the author was partially supported by NSF grant 1547145. The author would like to thank the anonymous referee for thoroughly reading the initial version of the paper and for  suggestions to improve the presentation.

\section{Construction I}\label{section: increase max}

In this section, we prove Theorem~\ref{thm: increase max} using several lemmas. We begin by showing how to deduce the theorem from these lemmas before stating and proving the lemmas themselves.

\begin{theorem}\label{thm: increase max}
Suppose $L_A$ is an Anosov automorphism on $\mathbb T^2$ which is induced by a matrix $A\in SL(2,\Z)$ with $\text{trace}(A)>2$. Let $\Lambda=h_{top}(L_A)$ and $H>0$. For any positive number $\gamma$, there exists a smooth family $\{g_s\}_{s\in[0,1]}$ of area-preserving Anosov diffeomorphisms on $\mathbb T^2$ homotopic to $L_A$ such that $g_0=L_A$ and the following hold:
\begin{enumerate}[label=(\Alph*)]
\item $g_s=L_A$ in a neighborhood of $(0,0)$ for all $s\in[0,1]$;
\item $\Lambda-\gamma<\lambda_{abs}(g_s)\leq \Lambda$ for all $s\in[0,1]$;
\item $\lambda_{mme}(g_1)>H$.
\end{enumerate} 
\end{theorem}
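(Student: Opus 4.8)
The plan is to realize $g_s$ as a \emph{global twist deformation} of $L_A$ supported away from the origin: concretely, take $g_s = L_A \circ \varphi_s$, where $\varphi_s$ is an area-preserving diffeomorphism of $\mathbb T^2$ that is the identity near $(0,0)$ and, in a long thin strip $R$ parallel to the unstable direction of $L_A$, acts as a shear of ``amplitude'' growing with $s$. Choosing coordinates $(\xi, \eta)$ adapted to the eigendirections of $A$ (with $\xi$ along the unstable line, $\eta$ along the stable line), I would set $\varphi_s(\xi,\eta) = (\xi + s\,\rho(\eta), \eta)$ on $R$, where $\rho$ is a bump-type function supported in a small $\eta$-interval, interpolated smoothly to the identity outside $R$; one checks this is area-preserving since the Jacobian is $1$, and that for the resulting map to stay Anosov the strip must be thin relative to the twist — this is where the parameter $\gamma$ enters, forcing the strip width to shrink as the twist grows. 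Standard cone-field criteria (e.g.\ \cite[§6.2]{KatokHasselblatt}) give Anosov-ness: the twist shears vectors \emph{toward} the unstable cone, so the unstable cone is preserved, and thinness of the strip controls the stable cone.

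Next I would establish (B), the lower bound on $\lambda_{abs}$. The point of shearing toward the unstable direction is that it only \emph{helps} expansion in the unstable bundle; more precisely, using \eqref{Lyapunov exponent through integral}, $\lambda_{abs}(g_s) = \int_{\mathbb T^2} \log\|Dg_s|_{E^u}\|\,d\Leb$, and I would show that outside the thin strip the unstable expansion is essentially that of $L_A$ while inside the strip — of Lebesgue measure $O(\gamma)$ — the expansion is bounded below by a positive constant (it could in principle be smaller than $\log\lambda_A$ on part of the strip, but never dramatically so for a fixed-geometry twist, and the strip is arbitrarily thin). Since the strip has small measure, $\lambda_{abs}(g_s) > \Lambda - \gamma$; the upper bound $\lambda_{abs}(g_s) \le \Lambda$ is \eqref{Lyapunov exponent through integral} together with $h_{\Leb}(g_s) = \lambda_{abs}(g_s)$ (Pesin), $h_{\Leb}(g_s) \le h_{top}(g_s) = \Lambda$ (variational principle plus topological conjugacy to $L_A$). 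Property (A) is immediate from the support condition on $\varphi_s$.

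The hard part — and the one I expect to defer to the later sections (as the paper announces) — is (C): forcing $\lambda_{mme}(g_1) > H$ for a prescribed $H$. Here \eqref{Lyapunov exponent through integral} reads $\lambda_{mme}(g_s) = \int \log\|Dg_s|_{E^u}\|\,d\nu_{g_s}$, and the measure of maximal entropy $\nu_{g_s}$ is not explicit. My approach would be to build an \emph{explicit Markov partition} for $g_s$ (a perturbed version of the standard Markov partition for $L_A$, with the twist strip respecting the partition's boundary so that the symbolic coding is still governed by a transition matrix close to that of $L_A$, hence entropy $\Lambda$), and then to show that in the symbolic picture the $\nu_{g_s}$-measure of the set of points whose orbit spends a definite fraction of time in the high-shear part of the strip is bounded below independently of how large the twist is. On that set $\log\|Dg_s|_{E^u}\|$ is as large as we like (it grows like $\log s$ as the twist amplitude $s \to \infty$, since a large shear composed with $L_A$ stretches unstable vectors by a factor $\sim s$), so the integral against $\nu_{g_s}$ exceeds any prescribed $H$ once $s$ is close enough to $1$ and the twist has been scaled up accordingly. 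The delicate estimate is the uniform-in-$s$ lower bound on the Bowen measure of the ``good'' cylinder sets, which is exactly what Section \ref{section: estimate_mme} is set up to handle via the combinatorics of the Markov partition; I would invoke that there. Continuity of $s \mapsto g_s$ in $C^\infty$ is clear from the construction, completing the family.
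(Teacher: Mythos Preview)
Your overall strategy is right and matches the paper's: a twist deformation supported on a thin strip away from the origin, with (A) immediate from the support, (B) via a small-measure argument plus Pesin/variational principle, and (C) via a Markov partition controlling the $\nu_{g_s}$-mass of the high-expansion region. But the specific construction you propose has a real obstruction. The eigendirections of $A$ have irrational slope on $\mathbb T^2$, so a ``strip parallel to the unstable direction'' is not a closed annulus but a piece of a dense immersed line; the shear $\varphi_s(\xi,\eta)=(\xi+s\rho(\eta),\eta)$ with $\rho$ a bump in $\eta$ does not match the identity at the $\xi$-ends of a finite strip $R$, and if you cut off by a factor $\chi(\xi)$ the Jacobian becomes $1+s\chi'(\xi)\rho(\eta)$, killing area-preservation. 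The paper avoids this by shearing in the \emph{standard} (lattice) coordinates: its map is $F^w_{l,\delta}=L_A\circ T$ with $T(x,y)=(x,\,y+\sgn(b)(f^w_{l,\delta}(x)-x))$, a vertical shear depending only on the periodic coordinate $x$, so the support $\{m\le x\le m+l\}$ is an honest closed annulus and the Jacobian is identically $1$.

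A second point: the claim that the twist ``only helps'' unstable expansion is not quite right. Any compactly supported shear with zero net displacement must have its derivative change sign; in the paper's version $f_{l,\delta}$ has slope $1-\beta<1$ on $[m,m+l-\delta]$ (locally \emph{reducing} expansion) and a compensating huge slope $\frac{\beta l+\delta(1-\beta)}{\delta}$ on the tiny interval $[m+l-\delta,m+l]$. The constraint $a+d-|b|\beta>2$ is exactly what keeps the cone field invariant through the slow region (Lemma~\ref{Anosov diffeomorphism}). The family parameter is then $\delta\in[\tilde\delta,l]$ rather than an amplitude $s$: at $\delta=l$ one recovers $L_A$, and as $\delta\downarrow\tilde\delta$ the fast strip $S_2(\delta)$ has derivative $\sim\beta l/\delta\to\infty$. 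Your Markov-partition outline for (C) is what Section~\ref{section: estimate_mme} carries out, producing a uniform lower bound $\nu_{g_s}(S_2(\delta))\ge Q>0$ independent of $\delta$ (Lemma~\ref{element_inside_for_smooth}), which combined with the large derivative on $S_2$ forces $\lambda_{mme}$ past $H$.
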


\begin{proof}
Fix $m\in (0,1)$. Let $l\in(0,l_\gamma)$ where $l_\gamma$ comes from Lemma~\ref{lemma: bound on abs in I} applied for $\eps=\gamma$. Let $\beta=\beta_0$ from Lemma~\ref{lemma: bound on mme in I}. Choose $\tilde\delta$ small enough such that $Q\log\mu^+\left(\frac{\beta_0l+\tilde\delta(1-\beta_0)}{\tilde\delta}\right)+(1-Q)\log C> H$, where $Q,C,\mu^+(\cdot)$ are as in Lemma~\ref{lemma: bound on mme in I}. Note that this is possible because $\mu^+(t)\rightarrow\infty$ as $t\rightarrow\infty$. Also, let $w\in (0,w_0(\tilde\delta))$. Then, the family of maps $F^w_{l,\delta}$ defined in \eqref{definition_twist_maps} where $\delta$ varies in $[\tilde\delta, l]$ is the desired family of maps.
\end{proof}



\subsection{Construction I: Anosov twist diffeomorphisms}\label{construction family}

Here we give an explicit formula for a family of smooth area-preserving twist diffeomorphisms \eqref{definition_twist_maps} that are Anosov for an appropriate choice of parameters (see Lemma~\ref{Anosov diffeomorphism}). A smooth subfamily of these diffeomorphisms is used to prove Theorem~\ref{thm: increase max}.

Suppose $L_A$ is an Anosov automorphism on $\mathbb T^2$ which is induced by a matrix $A\in SL(2,\mathbb Z)$ such that $A = \begin{pmatrix} a & b\\ c & d \end{pmatrix}$ and $a+d>2$.

Fix $m\in(0,1)$, let $l\in(0,1-m)$ and $\delta\in(0,l)$. Choose $\beta\in(0,1)$ such that $a+d-|b|\beta>2$. Notice that such $\beta$ exists because $a+d>2$.

Define a function $f_{l,\delta}\colon [0,1]\rightarrow [0,1]$ (see Figure~\ref{f_ldelta}) in the following way:

\begin{equation}\label{function_twist}
f_{l,\delta}(x) = \left\{
\begin{aligned}
&(1-\beta)x+\beta m &\text{ if }\quad &m<x\leq m+l-\delta,\\
&\frac{\beta l+\delta(1-\beta)}{\delta}x-\frac{\beta(m+l)(l-\delta)}{\delta} &\text{ if } \quad&m+l-\delta<x\leq m+l,\\
&x &&\text{ otherwise. }
\end{aligned}
\right .
\end{equation}

\begin{figure}[ht]
\centering
 \begin{tikzpicture}
\node at (-0.1,-0.15) {\footnotesize $0$};
\draw (6,-0.1) -- (6, 0.1) node at (6,-0.3) {\footnotesize $1$};
\draw (-0.1,6) -- (0.1, 6) node at (-0.2,6) {\footnotesize $1$};
\draw (3,-0.1) -- (3, 0.1) node at (3,-0.3) {\footnotesize $m$};
\draw (3.8,-0.1) -- (3.8, 0.1);
\draw (4,-0.1) -- (4, 0.1);

\draw[<->,blue] (3,0.5) -- (4,0.5) node at (3.5,0.7) {\footnotesize $l$}; 

\draw[<->,red] (3.8,1.2) -- (4,1.2) node at (3.9,1.4) {\footnotesize $\delta$}; 
\draw[dashed] (3,0) -- (3,6);
\draw[dashed] (3.8,0) -- (3.8,6);
\draw[dashed] (4,0) -- (4,6);
\draw[dashed] (6,0) -- (6,6);
\draw[dashed] (0,6) -- (6,6);

\draw (0,0) -- (3,3) node[rotate = 45] at (1.5,1.8) {\footnotesize the slope is $1$};
\draw (4,4) -- (6,6) node[rotate = 45] at (4.8,5.1) {\footnotesize the slope is $1$};
\draw (3,3) -- (3.8,3.5) node at (2,7) {\footnotesize the slope is $1-\beta$};
\draw (3.8, 3.5) -- (4,4)node at (5,6.5) {\footnotesize the slope is $\frac{\beta l+\delta(1-\beta)}{\delta}$};
\draw[->] (2.5,6.8) -- (3.5,6.1);
\draw[->] (4.1,6.3) -- (3.9,6.1);

 \draw[->] (-0.5,0) -- (6.5,0) node at (6.5,-0.3) {\footnotesize $x$}; 
    \draw[->] (0,-0.5) -- (0,6.5) node[rotate = 90] at (-0.3, 6.4) {\footnotesize $f_{l,\delta}$};
\end{tikzpicture}

  \caption{The graph of $f_{l,\delta}$.}
  \label{f_ldelta}
	\end{figure}

The following lemma allows us to obtain a smooth function that coincides with the given continuous piecewise linear function outside small neighborhoods of points of non-smoothness.

\begin{lemma}\label{twist_function}
Let $f_{l,\delta}(x)$ be as in \eqref{function_twist} and $w\in(0,\frac{\delta}{4})$. Denote by $\hat f_{l,\delta}(x)$ the continuous map on $\mathbb R$ that is a lift of $f_{l,\delta}(x)$ such that $\hat f_{l,\delta}(0)=f_{l,\delta}(0)$. Let $\theta_w(x)$ be a smooth positive even function on $\mathbb R$ such that $\int_{\mathbb R}\theta_w(y)dy=1$ and $\theta_w(x)=0$ if $x\not\in(-w,w)$, where $w>0$ and sufficiently small. Define 
\begin{equation}\label{definition of f_w_l_delta}
\hat f^w_{l,\delta}(x)=\int_{\mathbb R}\hat f_{l,\delta}(x-y)\theta_w(y)dy \text{ for any } x\in\mathbb R \qquad \text{ and }\qquad  f^w_{l,\delta}(x)=\hat f^w_{l,\delta}(x) \pmod 1.  
\end{equation}
Then, we have $f^w_{l,\delta}$ is a smooth function on $\mathbb R/\mathbb Z$. Moreover, $f^w_{l,\delta}(x)=f_{l,\delta}(x)$ outside of $w$-neighborhoods of the points $x=m, \, m+l-\delta, \, m+l$. 

In particular, \begin{equation}\label{derivative_twist}
\begin{aligned}
&1-\beta\leq D_xf^w_{l,\delta}\leq 1 &\text{ if }\quad & m-w<x\leq m+w,\\
&D_xf^w_{l,\delta}=1-\beta &\text{ if }\quad & m+w<x\leq m+l-\delta-w,\\
&1-\beta\leq D_xf^w_{l,\delta}\leq \frac{\beta l+\delta(1-\beta)}{\delta} &\text{ if }\quad & m+l-\delta-w<x\leq m+l-\delta+w,\\
&D_xf^w_{l,\delta}=\frac{\beta l+\delta(1-\beta)}{\delta} &\text{ if } \quad& m+l-\delta+w<x\leq m+l-w,\\
&1\leq D_xf^w_{l,\delta}\leq \frac{\beta l+\delta(1-\beta)}{\delta} &\text{ if }\quad &m+l-w<x\leq m+l+w,\\
&D_xf^w_{l,\delta}=1 &&\text{ otherwise. }
\end{aligned}
\end{equation}
\end{lemma}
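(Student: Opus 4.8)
The plan is to verify each claim in turn, with the main work being the explicit derivative bounds \eqref{derivative_twist}. First I would observe that $\hat f_{l,\delta}$, as the lift of a continuous piecewise-linear circle map of degree one, is a continuous function on $\mathbb R$ with $\hat f_{l,\delta}(x+1)=\hat f_{l,\delta}(x)+1$; in fact it is Lipschitz, and it is locally a polynomial (indeed affine) away from the three points $m$, $m+l-\delta$, $m+l$ modulo $1$. Convolving with the compactly supported smooth bump $\theta_w$ yields $\hat f^w_{l,\delta}\in C^\infty(\mathbb R)$ by the standard fact that $\theta_w * g$ is smooth whenever $\theta_w$ is smooth with compact support and $g$ is, say, locally integrable; differentiating under the integral gives $D^k \hat f^w_{l,\delta}(x)=\int_{\mathbb R}\hat f_{l,\delta}(x-y)\,\theta_w^{(k)}(y)\,dy$. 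The degree-one periodicity $\hat f^w_{l,\delta}(x+1)=\hat f^w_{l,\delta}(x)+1$ passes through the convolution since $\int_{\mathbb R}\theta_w=1$, so $f^w_{l,\delta}:=\hat f^w_{l,\delta}\bmod 1$ is a well-defined smooth map on $\mathbb R/\mathbb Z$.

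Next I would prove the locality statement: $f^w_{l,\delta}(x)=f_{l,\delta}(x)$ whenever $x$ is at distance at least $w$ from each of $m$, $m+l-\delta$, $m+l$ (mod $1$). The point is that on any open interval $I$ on which $\hat f_{l,\delta}$ is affine, say $\hat f_{l,\delta}(t)=\alpha t+c$, and for $x$ with $(x-w,x+w)\subset I$, we get
\begin{equation*}
\hat f^w_{l,\delta}(x)=\int_{-w}^{w}\big(\alpha(x-y)+c\big)\theta_w(y)\,dy=\alpha x+c,
\end{equation*}
using $\int\theta_w=1$ and $\int y\,\theta_w(y)\,dy=0$ (evenness of $\theta_w$). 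Since $w<\delta/4$, the three singular points are separated by more than $2w$ from each other (the gaps being $l-\delta$, $\delta$, and $1-l$, all exceeding $2w$ once $\delta$ itself does — here one uses $l-\delta>0$ and that $\delta>4w$), so the complement of the three $w$-neighborhoods is a union of intervals each lying inside a single affinity interval of $\hat f_{l,\delta}$, and the displayed computation applies.

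For \eqref{derivative_twist}, I would differentiate once: $D_x f^w_{l,\delta}(x)=\int_{-w}^{w} D\hat f_{l,\delta}(x-y)\,\theta_w(y)\,dy$, which is legitimate since $\hat f_{l,\delta}$ is Lipschitz, hence differentiable a.e., with $D\hat f_{l,\delta}$ bounded and equal a.e. to the piecewise-constant function taking the values $1$, $1-\beta$, $\frac{\beta l+\delta(1-\beta)}{\delta}$, $1$ on the successive pieces. (One can justify this either by approximating $\theta_w$ or by noting $D(\theta_w*g)=\theta_w*(Dg)$ for Lipschitz $g$.) On the interior intervals where $x-y$ stays within one affinity piece for all $|y|\le w$, the integral collapses to the constant slope of that piece, giving the four exact-value lines. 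On a $w$-neighborhood of a singular point, $D\hat f_{l,\delta}(x-y)$ takes the two slope values of the adjacent pieces as $y$ ranges over $(-w,w)$, so $D_x f^w_{l,\delta}(x)$ is a weighted average (with weights summing to $1$ and nonnegative) of those two values, hence lies between them; this yields exactly the three interval bounds, e.g. near $x=m$ the value lies in $[1-\beta,1]$, near $x=m+l-\delta$ it lies in $[1-\beta,\frac{\beta l+\delta(1-\beta)}{\delta}]$ (note $1-\beta$ is the smaller since $\frac{\beta l+\delta(1-\beta)}{\delta}=1-\beta+\frac{\beta l}{\delta}>1-\beta$), and near $x=m+l$ it lies in $[1,\frac{\beta l+\delta(1-\beta)}{\delta}]$ (and $\frac{\beta l+\delta(1-\beta)}{\delta}\ge 1$ iff $l\ge\delta$, which holds). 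The only mildly delicate point — the main obstacle, such as it is — is bookkeeping the separation of the singular points so that each $w$-neighborhood meets exactly two affinity pieces and the averaging argument is clean; this is where the hypothesis $w<\delta/4$ (together with $0<\delta<l<1-m$) is used, and one should check the three neighborhoods are pairwise disjoint and disjoint from the "slope $1$" region except at their own endpoints.
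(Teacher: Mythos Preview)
Your proposal is correct and is the standard mollification argument one would expect; the paper itself does not give a proof but simply refers the reader to Lemma~3.1 in \cite{E}, so there is no meaningful comparison of approaches to make. One small imprecision: you write that the gaps $l-\delta$, $\delta$, $1-l$ ``all exceed $2w$ once $\delta$ itself does,'' but $l-\delta>2w$ does not follow from $w<\delta/4$ alone---it is, however, implicit in the lemma statement (the interval $(m+w,\,m+l-\delta-w]$ in the second line of \eqref{derivative_twist} must be nonempty), and in any case is covered by the clause ``$w$ sufficiently small.''
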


\begin{proof}
See the proof of Lemma 3.1 in \cite{E}.
\end{proof}

We define $f^0_{l,\delta}=f_{l,\delta}$. For any sufficiently small $w\geq 0$, we consider a family of maps $F^w_{l,\delta}\colon \mathbb T^2\rightarrow \mathbb T^2$, where 
\begin{equation}\label{definition_twist_maps}
F^w_{l,\delta}(x,y) = \begin{pmatrix}
    (a-|b|)x+by+|b|f^w_{l,\delta}(x)\\(c-\sgn(b)d)x+dy+\sgn(b)df^w_{l,\delta}(x)\end{pmatrix} \pmod 1.
\end{equation}

In particular, we have $F^w_{l,\delta} = \begin{pmatrix} a & b\\ c & d\end{pmatrix}$ on $\left\{(x,y)\in\mathbb T^2| x\in[0,m-w]\cup[m+l+w,1)\right\}$.

By the choice of $\beta$, Lemma~\ref{twist_function}, and Lemma~\ref{Anosov diffeomorphism} below, we have that for all $l\in(0,1-m)$, $\delta\in(0,l)$, and sufficiently small $w>0$ the map $F^w_{l,\delta}$ is an area-preserving Anosov diffeomorphism homotopic to $L_A$.

We will need the following general facts.

\begin{proposition}\label{eigenvector_eigenvalue_proposition}
Let 
\begin{equation}\label{definition of A(t)}
A(t) = \begin{pmatrix} a+|b|(t-1) & b\\ c+\sgn(b)d(t-1) & d \end{pmatrix}
\end{equation}
 where $a,b,c,d,t\in\mathbb R$, $ad-bc=1$, and $a+d+(t-1)|b|>2$. Then, $\det(A)=1$ and the eigenvalues of $A(t)$ are
\begin{equation}\label{eigenvalue}
\mu^\pm(t) = \frac{1}{2}\left((a+d+|b|(t-1)\pm\sqrt{\left(a+d+|b|(t-1)\right)^2-4}\right)
\end{equation}
with corresponding eigenvectors
\begin{equation}\label{eigenvector}
    \pmb e^{\pm}(t) = \begin{pmatrix} 2b\\ \phi^\pm\left(a+d+|b|(t-1)\right) \end{pmatrix},
\end{equation}
where
$\phi^{\pm}(u) = 2d-u\pm\sqrt{u^2-4}$ . In particular, $\mu^+(t)>1$ and $0<\mu^-(t)<1$.
\end{proposition}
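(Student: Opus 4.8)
The assertion is pure linear algebra, so the plan is simply to compute, in order, the determinant, the characteristic polynomial, its roots, and the eigenvectors. First I would compute $\det A(t)$: from \eqref{definition of A(t)},
$\det A(t) = \bigl(a+|b|(t-1)\bigr)d - b\bigl(c+\sgn(b)d(t-1)\bigr) = ad - bc + d(t-1)\bigl(|b| - b\,\sgn(b)\bigr)$,
and since $b\,\sgn(b) = |b|$ the $t$-dependent term vanishes, leaving $\det A(t) = ad - bc = 1$. This cancellation is exactly the point of the otherwise artificial-looking definition \eqref{definition of A(t)}.

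Next, set $u := a + d + |b|(t-1)$, which is $\operatorname{trace}A(t)$ and which by hypothesis satisfies $u > 2$. The characteristic polynomial is then $\mu^2 - u\mu + 1$, whose roots are precisely those in \eqref{eigenvalue}. The discriminant $u^2 - 4$ is positive, so the two roots are real and distinct; by Vieta's formulas $\mu^+(t)\mu^-(t) = 1$ and $\mu^+(t) + \mu^-(t) = u > 0$, so both roots are positive, and combined with $\mu^+(t) > \mu^-(t)$ this forces $0 < \mu^-(t) < 1 < \mu^+(t)$.

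Finally, for the eigenvectors I would first note that for the matrices of interest $b \neq 0$ — indeed for an integer matrix with $ad - bc = 1$, $b = 0$ would force $ad = 1$ and hence $|a+d| = 2$, which is excluded; under the weaker hypotheses of the proposition one may simply take $b \neq 0$, the only case that is used. Then the first row $\bigl(a + |b|(t-1) - \mu^\pm(t),\ b\bigr)$ of $A(t) - \mu^\pm(t)I$ is nonzero, and since $\det\bigl(A(t) - \mu^\pm(t)I\bigr) = 0$ this matrix has rank $1$, so its eigenspace for $\mu^\pm(t)$ is exactly the kernel of that first row. It then remains to check that $\pmb e^\pm(t)$ from \eqref{eigenvector} lies in this kernel, i.e. that $\bigl(a + |b|(t-1) - \mu^\pm(t)\bigr)\cdot 2b + b\,\phi^\pm(u) = 0$; dividing by $b$, substituting $a + |b|(t-1) = u - d$ and $2\mu^\pm(t) = u \pm \sqrt{u^2 - 4}$, this collapses to $\phi^\pm(u) = 2d - u \pm \sqrt{u^2-4}$, which is the definition of $\phi^\pm$. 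Since $2b \neq 0$, the vector $\pmb e^\pm(t)$ is nonzero, hence a genuine eigenvector. There is no serious obstacle in any of this; the only things to watch are the cancellation $|b| - b\,\sgn(b) = 0$ in the determinant and keeping the $\pm$ bookkeeping consistent between $\mu^\pm$ and $\phi^\pm$.
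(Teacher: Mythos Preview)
Your proof is correct and is precisely the ``straightforward computation'' that the paper alludes to without writing out; the paper's own proof consists of the single sentence ``Follows from straightforward computations.'' Your explicit treatment of the cancellation $|b|-b\,\sgn(b)=0$, the Vieta argument for $0<\mu^-(t)<1<\mu^+(t)$, and the remark that $b\neq 0$ in the cases of interest are exactly what is needed.
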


\begin{proof}
Follows from straightforward computations.
\end{proof}

\begin{lemma}\label{Anosov diffeomorphism}
Suppose $L_A$ is an Anosov area-preserving automorphism on $\mathbb T^2$ which is induced by a matrix $A = \begin{pmatrix} a & b\\ c & d \end{pmatrix}\in SL(2,\mathbb Z)$ with $\text{trace}(A)>2$. Let $f\colon \mathbb R/\mathbb Z \rightarrow \mathbb R/ \mathbb Z$ be a smooth diffeomorphism such that $f(0)=0$, $0<\alpha_1\leq D_xf$ for all $x\in\mathbb R/\mathbb Z$, and $a+d+(\alpha_1-1)|b|>2$. Define a map $F\colon \mathbb T^2\rightarrow\mathbb T^2$ in the following way:
\begin{equation*}
    F(x,y) = \begin{pmatrix}
    (a-|b|)x+by+|b|f(x)\\(c-\sgn(b)d)x+dy+\sgn(b)df(x)\end{pmatrix} \pmod 1.
\end{equation*}
Then, $F$ is a smooth area-preserving Anosov diffeomorphism on $\mathbb T^2$.

Moreover, let $\tilde\alpha>0$ such that $\tilde\alpha<\alpha_1$ and $a+d+(\tilde\alpha-1)|b|>2$. Define the following vectors:
\begin{align}\label{bounds_of_cones}
&\pmb v^+_{min}(t)  = \begin{pmatrix} 2b\\ \phi^+\left(a+d+|b|t\right) \end{pmatrix}, \qquad \pmb v^+_{max} = \begin{pmatrix}
b\\ d
\end{pmatrix}, \\ 
&\pmb v^-_{min}(t) = \begin{pmatrix}
2b\\ \phi^-\left(a+d+|b|t\right)\end{pmatrix},\qquad \text{ and }\qquad \pmb v^-_{max} = \begin{pmatrix}
0\\-1
\end{pmatrix},\nonumber
\end{align}
where $\phi^\pm$ as in Proposition~\ref{eigenvector_eigenvalue_proposition} and $t>\frac{2-(a+d)}{|b|}$.
Let $\mathcal C^\pm$ be the union of the positive cone spanned by $\pmb v^\pm_{min}(\tilde\alpha-1)$ and $\pmb v^\pm_{max}$  and its symmetric complement with vertex at $(0,0)$ in $\mathbb R^2$, respectively. Then, the cones $\mathcal C^\pm$ in $T_{(x,y)}\mathbb T^2$ for all $(x,y)\in\mathbb T^2$ define a system of invariant cones for $F$. Also, there exist $\mu>1>\nu>0$ that depend only on the entries of $A$ and $\tilde\alpha$ such that for all $\pmb v\in\mathcal C^+$ we have $\|D_{(x,y)}F\pmb v\|\geq \mu\|\pmb v\|$ and for all $\pmb v\in\mathcal C^-$ we have $\|D_{(x,y)}F^{-1}\pmb v\|\geq \nu^{-1}\|\pmb v\|$ for any $(x,y)\in\mathbb T^2$.
\end{lemma}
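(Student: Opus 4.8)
The plan is to verify directly that the cones $\mathcal C^\pm$ are invariant under $DF$ and $DF^{-1}$ respectively, with uniform expansion/contraction, since this is the standard cone criterion (Katok--Hasselblatt, \S 6.2) guaranteeing that $F$ is Anosov; area preservation and smoothness are immediate from the explicit formula. First I would compute the differential
\begin{equation*}
D_{(x,y)}F = \begin{pmatrix} a - |b| + |b|D_xf & b \\ c - \sgn(b)d + \sgn(b)dD_xf & d \end{pmatrix} = \begin{pmatrix} a + |b|(D_xf - 1) & b \\ c + \sgn(b)d(D_xf-1) & d \end{pmatrix},
\end{equation*}
and observe that this is exactly the matrix $A(t)$ of Proposition~\ref{eigenvector_eigenvalue_proposition} with the parameter $t = D_xf(x) \in [\alpha_1, \sup D_xf]$ — in particular $t \geq \alpha_1$, so $a+d+|b|(t-1) > 2$ pointwise. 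Thus at every point $D_{(x,y)}F$ is a hyperbolic matrix of determinant one whose unstable eigendirection is $\pmb e^+(t) = \binom{2b}{\phi^+(a+d+|b|t)}$ and whose stable eigendirection is $\pmb e^-(t)$, and whose eigenvalues $\mu^\pm(t)$ are given by \eqref{eigenvalue}.

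The key geometric point is that, as $t$ ranges over all admissible values $[\,\tilde\alpha,\infty)$, the unstable eigendirections $\pmb e^+(t)$ sweep out a cone bounded on one side by $\pmb v^+_{min}(\tilde\alpha-1)$ (corresponding to $t=\tilde\alpha$) and converging, as $t\to\infty$, to the direction $\binom{b}{d}=\pmb v^+_{max}$ (since $\phi^+(u)=2d-u+\sqrt{u^2-4} = 2d - 2/(u+\sqrt{u^2-4}) \to 2d$ divided against $2b$, i.e. slope $\to d/b$); monotonicity of $\phi^+$ in $u$ makes this sweep monotone. Hence the closed cone $\mathcal C^+$ spanned by $\pmb v^+_{min}(\tilde\alpha-1)$ and $\pmb v^+_{max}$ contains every unstable eigendirection $\pmb e^+(t)$ with $t\geq\tilde\alpha$ in its interior, and is strictly forward-invariant: $D_{(x,y)}F(\mathcal C^+)\subset \Int\mathcal C^+$ — this is a linear-algebra fact about a single hyperbolic matrix $A(t)$ applied to a cone that already contains its unstable direction and whose boundary rays $A(t)$ pushes toward that direction. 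Similarly, writing $D F^{-1}$ at the image point as $A(t)^{-1}$ and noting $A(t)^{-1}$ has the same eigendirections with reciprocal eigenvalues, the stable cone $\mathcal C^-$ (spanned by $\pmb v^-_{min}(\tilde\alpha-1)$ and $\pmb v^-_{max}=\binom{0}{-1}$, the latter being the $t\to\infty$ limit of $\pmb e^-(t)$) is strictly forward-invariant under every $DF^{-1}$. The uniform hyperbolicity constants $\mu>1>\nu>0$ are then extracted by a compactness argument: for $t$ in a compact interval the expansion of $A(t)$ on $\mathcal C^+$ is bounded below by a continuous positive function of $t$, and for $t\to\infty$ the matrix $A(t)$ only expands more on $\mathcal C^+$ (its smaller singular value on that cone stays bounded below, since the cone is bounded away from the strongly-contracted direction), so $\inf_t \min_{\pmb v\in\mathcal C^+,\|\pmb v\|=1}\|A(t)\pmb v\| =: \mu > 1$; the estimate depends only on the entries of $A$ and on $\tilde\alpha$ (which fixes the opening of the cone), not on $f$. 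The analogous bound for $A(t)^{-1}$ on $\mathcal C^-$ gives $\nu^{-1}>1$.

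I would carry this out in the order: (i) record $D_{(x,y)}F = A(D_xf(x))$ and invoke Proposition~\ref{eigenvector_eigenvalue_proposition}; (ii) prove the monotone-sweep statement for $\pmb e^\pm(t)$ and identify the cone boundaries $\pmb v^\pm_{min},\pmb v^\pm_{max}$ as the $t=\tilde\alpha$ and $t\to\infty$ limiting slopes; (iii) deduce strict invariance $D F(\mathcal C^+)\subset\Int\mathcal C^+$ and $DF^{-1}(\mathcal C^-)\subset\Int\mathcal C^-$ from the single-matrix cone lemma; (iv) extract uniform $\mu,\nu$ by the compactness-plus-asymptotics argument; (v) conclude via the cone criterion that $F$ is Anosov, with area preservation following from $\det DF = ad-bc = 1$ and the shear structure of the map. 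The main obstacle is step (iv): one must check that expansion on $\mathcal C^+$ does not degenerate as $D_xf\to\infty$, i.e. that although $A(t)$ becomes very hyperbolic its weak (near-stable) direction stays strictly outside $\mathcal C^+$ so the worst-case expansion on $\mathcal C^+$ stays above $1$ uniformly; this is where the choice $\tilde\alpha<\alpha_1$ with strict inequality $a+d+(\tilde\alpha-1)|b|>2$ is used, giving a definite angular gap between $\partial\mathcal C^+$ and the contracting directions $\pmb e^-(t)$ for all $t\geq\alpha_1$.
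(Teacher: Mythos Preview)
Your proposal is correct and follows the same overall strategy as the paper: identify $D_{(x,y)}F = A(D_xf)$, use the explicit eigendata from Proposition~\ref{eigenvector_eigenvalue_proposition} to see that the cone boundaries are the extremal unstable (resp.\ stable) eigendirections as $t$ ranges over $[\tilde\alpha,\infty)$, and conclude via the invariant-cone criterion.

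The one place your argument differs substantively from the paper is step~(iv), the uniform expansion bound. You propose a compactness-plus-asymptotics argument: continuity on bounded $t$, and for $t\to\infty$ the observation that the contracting direction $\pmb e^-(t)\to\binom{0}{-1}$ stays uniformly transverse to $\mathcal C^+$. This is correct in principle, but the paper instead carries out an \emph{explicit} computation on the two boundary vectors of $\mathcal C^+$. Writing $A(t)=A(\tilde\alpha)+(t-\tilde\alpha)\begin{psmallmatrix}|b|&0\\\sgn(b)d&0\end{psmallmatrix}$ and using that $\pmb v^+_{min}(\tilde\alpha-1)$ is an eigenvector of $A(\tilde\alpha)$, one gets $\|A(t)\pmb v^+_{min}(\tilde\alpha-1)\|\geq\mu^+(\tilde\alpha)\|\pmb v^+_{min}(\tilde\alpha-1)\|$ for \emph{all} $t\geq\tilde\alpha$, after checking $\langle\pmb v^+_{min}(\tilde\alpha-1),\pmb v^+_{max}\rangle>0$ (this uses $d\in\mathbb Z$). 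A separate direct calculation handles $\pmb v^+_{max}$. Then the paper observes that for each fixed $t$ the set of vectors expanded by at least $\mu$ is itself a cone containing $\pmb e^+(t)$, hence contains all of $\mathcal C^+$ once it contains both boundary rays. This gives an explicit $\mu=\min\{\mu^+(\tilde\alpha),\mu^+_{max}\}$ depending only on $A$ and $\tilde\alpha$, which is exactly the form needed downstream (e.g.\ in Lemma~\ref{lemma: bound on abs in I}). Your compactness route would also deliver such a $\mu$, but less explicitly, and you would still need to justify carefully that the minimum over the unit sphere in $\mathcal C^+$ does not drop toward $1$ along some sequence $t_n\to\infty$; the paper's direct computation sidesteps that entirely.
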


\begin{remark}
We apply Lemma~\ref{Anosov diffeomorphism} for the family of functions $f^w_{l,\delta}$ (see \eqref{definition of f_w_l_delta}) which satisfy $f^w_{l,\delta}(0)=0$ and $D_xf^w_{l,\delta}\geq 1-\beta$ for all $x\in\mathbb R/\mathbb Z$ (see \eqref{derivative_twist}). Recall that we only work with $\beta\in(0,1)$ such that $a+d-|b|\beta>2$. 
\end{remark}

\begin{figure}[ht]
\centering
\begin{tikzpicture}
\draw (0,1) -- (2,1) -- (2,3) -- (0,3) -- (0,1) node at (1,2) {$\mathbb T^2$};
\draw[->] (3,2) -- (4,2) node at (3.5,2.3) {$F$};

\draw[dashed] (5,0) -- (11,0) -- (11,4) -- (5,4) -- (5,0);
\draw[dashed] (5,2) -- (11,2);
\draw[dashed] (7,0) -- (7,4);
\draw[dashed] (9,0) -- (9,4);

\draw (7,1) -- (5,0) -- (7,2) -- (9,3);
\draw (10,3.5) -- (11,4) -- (9,2) -- (8,1.5);
\draw (9.45,3.075) -- (9,3);
\draw (7.45,1.075) -- (7,1);
\draw (9.45,3.075) -- (10,3.5);
\draw (7.45,1.075) -- (8,1.5);
\node at (8.5,2.2) {$F(\mathbb T^2)$};
\end{tikzpicture}
\caption{The image of $\mathbb T^2$ under $F$ obtained from $A=\usebox{\smlmat}$ and $f = f^w_{\frac{1}{4},\frac{1}{16}}$ with parameters $w>0$ and $m=\frac{1}{2}$.}
\end{figure}

\begin{proof}
Notice that $F$ is a smooth map from $\mathbb T^2$ to $\mathbb T^2$ with Jacobian equal to $1$, so it is an area-preserving diffeomorphism.

We show that $F$ is Anosov using invariant cones \cite[Corollary 6.4.8]{KatokHasselblatt}. 

Define $A(t) = \begin{pmatrix} a+|b|(t-1) & b\\ c+\sgn(b)d(t-1) & d \end{pmatrix}$. In particular, we have $D_{(x,y)}F = A(D_xf)$ for any $(x,y)\in\mathbb T^2$.

We point out some properties of $\phi^\pm$. First, we have that $\phi^+(u)$ is monotonically increasing and $\phi^-(u)$ is monotonically decreasing for $u>2$. In particular, $2(d-1)\leq\phi^+(u)<2d$ for $u>2$. Also, both functions are smooth for $u>2$. Moreover, $\lim\limits_{u\rightarrow +\infty}\phi^+(u)=2d$ and $\lim\limits_{u\rightarrow +\infty}\phi^-(u)=-\infty$. Therefore, we obtain that $\mathcal C^+\cap\mathcal C^- = \emptyset$. Moreover, for any $t\geq\alpha_1>\tilde\alpha$, we have  
\begin{equation}\label{invariant cone}
    A(t)\mathcal C^+\subset \Int\, (\mathcal C^+) \qquad \text{and} \qquad A(t)^{-1}\mathcal C^-\subset \Int\, (\mathcal C^-),
\end{equation}
where $\Int$ stands for the interior of the set. See Figure~\ref{figure: cones} for an example of $\mathcal C^\pm$.

We consider the inner product of $\pmb v^+_{min}(\tilde\alpha-1)$ and $\pmb v^+_{max}$
\begin{align}\label{dot product minmax}
\langle\pmb v^+_{min}(\tilde\alpha-1), \pmb v ^+_{max}\rangle =  2b^2+d\phi^+(a+d+|b|(\tilde\alpha-1))\geq \left\{\begin{aligned}&2b^2+2d(d-1) &\text { if } d>0,\\ &2b^2+2d^2 &\text{ if } d\leq 0.\end{aligned}\right.
\end{align}
By \eqref{dot product minmax}, using the fact that $d\in\mathbb Z$, we have that $\langle\pmb v^+_{min}(\tilde\alpha-1), \pmb v ^+_{max}\rangle >0$.

Recall that $\pmb v^+_{min}(\tilde\alpha-1)$ is an eigenvector of $A(\tilde\alpha)$ with an eigenvalue $\mu^+(\tilde\alpha)>1$. Therefore, 
\begin{align*}
\|A(t)\pmb v^+_{min}(\tilde\alpha-1)\|^2 &= \left\|\left[A(\tilde\alpha)+\begin{pmatrix}|b|(t-\tilde\alpha) &0\\ \sgn(b)d(t-\tilde\alpha) &0 \end{pmatrix}\right]\pmb v^+_{min}(\tilde\alpha-1)\right\|^2 \\&= \left\|A(\tilde\alpha)\pmb v^+_{min}(\tilde\alpha-1)+2(t-\tilde\alpha)b\begin{pmatrix}|b|\\\sgn(b)d\end{pmatrix}\right\|^2 \\&= \left\|\mu^+(\tilde\alpha)\pmb v^+_{min}(\tilde\alpha-1)+2|b|(t-\tilde\alpha)\pmb v^+_{max}\right\|^2\\&= \left(\mu^+(\tilde\alpha)\|\pmb v^+_{min}(\tilde\alpha-1)\|\right)^2+4\mu^+(\tilde\alpha)(t-\tilde\alpha)|b|\langle \pmb v^+_{min}(\tilde\alpha-1), \pmb v^+_{max}\rangle+\left(2(t-\tilde\alpha)|b|\|\pmb v^+_{max}\|\right)^2.    
\end{align*}

Therefore, since $\mu^+(\tilde\alpha)>1$ and $\langle\pmb v^+_{min}(\tilde\alpha-1), \pmb v ^+_{max}\rangle >0$, for any $t>\tilde\alpha$ we have $\|A(t)\pmb v^+_{min}(\tilde\alpha-1)\|\geq \mu^+(\tilde\alpha)\|\pmb v^+_{min}(\tilde\alpha-1)\|$.

Moreover, using that $ad-bc=1$, $d\in\mathbb Z$, and $a+|b|(t-1)+d>2$ for any $t>\tilde\alpha$, we have
\begin{align*}
\|A(t)\pmb v^+_{max}\|^2 &= \left\|\begin{pmatrix} a+|b|(t-1) & b\\ c+\sgn(b)d(t-1) & d \end{pmatrix}\begin{pmatrix}b\\d\end{pmatrix}\right\|^2 = \left\|\begin{pmatrix}b(a+|b|(t-1)+d)\\bc+|b|d(t-1)+d^2\end{pmatrix}\right\|^2 \\ &= \left\|\begin{pmatrix}b(a+|b|(t-1)+d)\\-1+d(a+|b|(t-1)+d)\end{pmatrix}\right\|^2\\&= b^2\left(a+d+|b|(t-1)\right)^2+\left(d\left(a+d+|b|(t-1)\right)-1\right)^2\geq 4b^2+(2d-1)^2 \\&= \left(1+\frac{3b^2+3d^2-4d+1}{b^2+d^2}\right)\|\pmb v^+_{max}\|^2.
\end{align*}
In particular, $\|A(t)\pmb v^+_{max}\|\geq \mu^+_{max}\|\pmb v^+_{max}\|$, where $\mu^+_{max} = (1+\frac{3b^2+3d^2-4d+1}{b^2+d^2})>1$ because $3d^2-4d+1\geq 0$ for $d\in\mathbb Z$ and $b\neq 0$ as $a,d\in\mathbb Z$, $ad-bc=1$, and $\text{trace}(A)>2$. 

Let $\mu = \min\{\mu^+(\tilde\alpha), \mu^+_{max}\}>1$. By the properties of $\phi^+$ and the fact that $\text{trace}(A(t))>2$ for $t>\tilde\alpha$, we have that the expanding eigenvectors of $A(t)$ for any $t>\tilde\alpha$ belong to $\mathcal C^+$ (see \eqref{eigenvector}). Moreover, for any $t>\tilde\alpha$, if the set of all vectors in $\mathbb R^2$ that expand at least $\mu$ times by $A(t)$ is non-empty, then it is a cone containing an expanding eigenvector of $A(t)$. As shown above, $\pmb v^+_{min}(\tilde\alpha-1)$ and $\pmb v^+_{max}$ expand at least $\mu$ times by $A(t)$ for any $t>\tilde\alpha$. Since $\mathcal C^+$ is the cone spanned by  $\pmb v^+_{min}(\tilde\alpha-1)$ and $\pmb v^+_{max}$, we have $\|A(t)\pmb v\|\geq \mu\|\pmb v\|$ for all $\pmb v\in\mathcal C^+$ and for all $t>\tilde\alpha$.

Similarly, it can be shown that there exists $\nu\in(0,1)$ such that for all $\pmb v\in\mathcal C^-$, we have $\|A(t)^{-1}\pmb v\|\geq\nu^{-1}\|\pmb v\|$ for all $t>\tilde\alpha$.

Since $D_{(x,y)}F = A(D_xf)$ for any $(x,y)\in\mathbb T^2$ and $D_xf\geq\alpha_1>\tilde\alpha$, by the criterion for map to be Anosov using invariant cones, we obtain that $F$ is Anosov. 
\end{proof}

\begin{figure}[ht]
\centering
\begin{tikzpicture}
\draw[fill=gray!30, line width=0.3mm, gray!30] (0,0)--(3,3)--(3,1.5)--(0,0);
\draw[fill=gray!30, line width=0.3mm, gray!30] (0,0)--(-3,-3)--(-3,-1.5)--(0,0);

\draw[fill=gray!10, line width=0.3mm, gray!10] (0,0)--(0,-3)--(3,-3)--(0,0);
\draw[fill=gray!10, line width=0.3mm, gray!10] (0,0)--(0,3)--(-3,3)--(0,0);

\node at (-0.1,-0.15) {\footnotesize $0$};
\draw [->] (-3, 0)--(3,0);
\draw [->] (0,-3)--(0,3);
\draw (-3,-3)--(3,3);
\draw (-3,-1.5)--(3,1.5);
\draw (3,-3)--(-3,3);

\draw[line width=0.3mm,->] (0,0)--(1,1) node at (0.4,0.9) {\footnotesize $\pmb v^+_{max}$};
\draw[line width=0.3mm,->] (0,0)--(2,1) node at (2,0.5) {\footnotesize $\pmb v^+_{min}(\tilde\alpha-1)$};
\draw[line width=0.3mm,->] (0,0)--(2,1.236) node at (2.2,1.4) {\footnotesize $\pmb e^+(1)$};
\node at (2.6,2) {\footnotesize $\mathcal C^+$};

\draw[line width=0.3mm,->] (0,0)--(0,-1) node at (-0.4,-0.9) {\footnotesize $\pmb v^-_{max}$};
\draw[line width=0.3mm,->] (0,0)--(2,-2) node at (2.85,-1.8) {\footnotesize $\pmb v^-_{min}(\tilde\alpha-1)$};
\draw[line width=0.3mm,->] (0,0)--(2,-3.236) node at (2.45,-3) {\footnotesize $\pmb e^-(1)$};
\node at (-1,2.7) {\footnotesize $\mathcal C^-$};

\end{tikzpicture}
\caption{The cones $\mathcal C^\pm$ for $F$ obtained from $A=\usebox{\smlmat}$ with $\tilde\alpha=0.5$. Here $\pmb e^\pm(1)$ are eigenvectors of $A$.}
\label{figure: cones}
\end{figure}

\subsection{Estimation of $\lambda_{abs}$ in Construction I}\label{section: estimate_abs}
The goal of this section is to prove the following lemma.
\begin{lemma}\label{lemma: bound on abs in I}
 Consider the smooth area-preserving Anosov diffeomorphisms $F^w_{l,\delta}\colon\mathbb T^2\rightarrow \mathbb T^2$ defined in \eqref{definition_twist_maps} (see also Lemma~\ref{twist_function}) using $A=\begin{pmatrix}a & b\\ c& d\end{pmatrix}\in SL(2,\mathbb Z)$ with $a+d>2$. Then, for any $\eps>0$ there exists $l_\eps = l_{\eps}>0$ such that for any $0<l<l_{\eps}$, any $0<\beta<\frac{a+d-2}{|b|}$, any $\delta\in(0,l)$, and any $w\in(0,\frac{\delta}{4})$, we have  $\lambda_{abs}(F^w_{l,\delta})>\Lambda-\eps$, where $\Lambda = h_{top}(L_A) = \log(\mu^+(1))$ (see \eqref{eigenvalue}).
\end{lemma}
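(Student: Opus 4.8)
The plan is to work with the formula $\lambda_{abs}(F^w_{l,\delta})=\int_{\mathbb T^2}\log\|DF^w_{l,\delta}|_{E^u}\|\,d\Leb$, which is the case $n=2$, $u=1$ of \eqref{Lyapunov exponent through integral}, and to exploit that $F:=F^w_{l,\delta}$ satisfies $D_{(x,y)}F=A$ on the "good set" $G:=\{(x,y)\in\mathbb T^2:x\notin(m-w,m+l+w)\bmod 1\}$ (by the last line of \eqref{derivative_twist}, where $D_xf^w_{l,\delta}=1$, and $D_{(x,y)}F=A(D_xf^w_{l,\delta})$). Its complement $B:=\mathbb T^2\setminus G$ has $\Leb(B)=l+2w<\tfrac32 l$ since $w<\delta/4<l/4$. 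The heuristic is that at a point $x$ whose backward orbit $x,F^{-1}x,\dots,F^{-N}x$ stays in $G$, the unstable line $E^u(x)$ has been squeezed by $A^N$ toward the expanding eigendirection $\mathbb R\pmb e^+(1)$ of $A$, along which the expansion factor equals $\mu^+(1)=e^\Lambda$; since $\log\|DF|_{E^u}\|\ge\log\mu>0$ everywhere by Lemma~\ref{Anosov diffeomorphism}, the points with "bad" backward history contribute non-negatively, so if there are few of them the integral is forced close to $\Lambda$.

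First I would fix a single cone $\mathcal C^*$, depending only on $A$, with $\pmb e^+(1)$ in its interior, $\pmb e^-(1)\notin\mathcal C^*$, and $\mathcal C^+(\beta)\subseteq\mathcal C^*$ for every admissible $\beta\in(0,\tfrac{a+d-2}{|b|})$ and every valid $\tilde\alpha<1-\beta$, where $\mathcal C^+(\beta)$ is the invariant cone of Lemma~\ref{Anosov diffeomorphism}. This follows by inspecting the boundary vectors \eqref{bounds_of_cones} and the monotonicity of $\phi^\pm$: the cones $\mathcal C^+(\beta)$ grow with $\beta$ and stay inside the cone spanned by $\pmb v^+_{max}=(b,d)$ and $(2b,2d-2)$, which avoids $\pmb e^-(1)=(2b,\phi^-(a+d))$ because $\phi^-(a+d)<\phi^-(2)=2d-2<\phi^+(a+d)$, and contains $\pmb e^+(1)=(2b,\phi^+(a+d))$ since $\phi^+(a+d)\in(2d-2,2d)$. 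On $\mathbb{RP}^1$ the matrix $A$ has $[\pmb e^+(1)]$ as attracting and $[\pmb e^-(1)]$ as repelling fixed point, so $A^n(\mathcal C^*)\to[\pmb e^+(1)]$ uniformly, while $\pmb v\mapsto\log(\|A\pmb v\|/\|\pmb v\|)$ is continuous with value $\Lambda$ at $[\pmb e^+(1)]$. Hence, given $\eps>0$ (we may assume $\eps<\Lambda$, as $\lambda_{abs}>0$ always), there exist $N=N(A,\eps)\in\mathbb N$ — independent of $l,\beta,\delta,w$ — and a neighborhood $U$ of $[\pmb e^+(1)]$ with $A^N(\mathcal C^*)\subset U$ and $\log(\|A\pmb v\|/\|\pmb v\|)>\Lambda-\tfrac{\eps}{2}$ for all $[\pmb v]\in U$.

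Now combine this with an orbit count. Put $B_N:=\bigcup_{j=0}^N F^j(B)$; since $F$ preserves $\Leb$, $\Leb(B_N)\le(N+1)\Leb(B)<\tfrac32(N+1)l$. For $x\notin B_N$ one has $x,F^{-1}x,\dots,F^{-N}x\in G$, hence $D_{F^{-j}x}F=A$ for $0\le j\le N$, so by invariance of $E^u$ and the cone criterion ($E^u\subseteq\mathcal C^+(\beta)$),
\[
E^u(x)=A^N\!\big(E^u(F^{-N}x)\big)\subseteq A^N(\mathcal C^+(\beta))\subseteq A^N(\mathcal C^*)\subseteq U,
\]
and therefore $\log\|D_xF|_{E^u(x)}\|>\Lambda-\tfrac{\eps}{2}$. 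On $B_N$ we use only $\log\|D_xF|_{E^u(x)}\|\ge\log\mu>0$. Splitting the integral,
\[
\lambda_{abs}(F)\ \ge\ (\Lambda-\tfrac{\eps}{2})\,\Leb(\mathbb T^2\setminus B_N)\ \ge\ \Lambda-\tfrac{\eps}{2}-\Lambda\,\Leb(B_N)\ \ge\ \Lambda-\tfrac{\eps}{2}-\tfrac32\Lambda(N+1)\,l,
\]
so setting $l_\eps:=\dfrac{\eps}{3\Lambda(N+1)}$ yields $\lambda_{abs}(F^w_{l,\delta})>\Lambda-\eps$ for all $0<l<l_\eps$ and all admissible $\beta,\delta,w$, which is the claim.

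The explicit cone inspection and the contraction/continuity on $\mathbb{RP}^1$ are routine. The one point needing care is that $N$ must be uniform in $\beta$, even though the map degenerates toward a parabolic one as $\beta\uparrow\tfrac{a+d-2}{|b|}$; this is precisely the role of the fixed enveloping cone $\mathcal C^*$, which need not be $A(t)$-invariant for the large $t$ arising in the twist — only the map-dependent $\mathcal C^+(\beta)$ must be — but only has to contain $\pmb e^+(1)$ in its interior, contain every $\mathcal C^+(\beta)$, and avoid $[\pmb e^-(1)]$.
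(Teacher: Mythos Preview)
Your proof is correct and takes a genuinely different route from the paper's argument.

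The paper proceeds via orbit decomposition: it writes $\lambda_{abs}(F^w_{l,\delta})=\lim_n \tfrac1n\log\|D_pF^n\pmb v\|$ for a Lebesgue-typical $p$, splits the orbit $p,Fp,\dots,F^{n-1}p$ into maximal blocks lying in the bad strip $\mathcal S=\{m-w\le x\le m+l+w\}$ and its complement, establishes a uniform cone estimate $\|A^n\pmb v\|\ge Ce^{\Lambda n}\|\pmb v\|$ (with $C=C(A)$ obtained by a fairly detailed computation in the $(\pmb v^u,\pmb v^s)$ basis), and then invokes Birkhoff's ergodic theorem to conclude that the asymptotic proportion of time in $\mathcal S$ equals $l+2w$, yielding $\lambda_{abs}\ge (l+2w)\min\{\log C,0\}+\Lambda(1-l-2w)$.

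Your argument instead works directly with the integral $\int\log\|DF|_{E^u}\|\,d\Leb$ and performs a \emph{space} decomposition according to finite backward history: at points whose last $N$ preimages avoid $B$ you use only the projective contraction $A^N(\mathcal C^*)\to[\pmb e^+(1)]$ and continuity of $\pmb v\mapsto\log(\|A\pmb v\|/\|\pmb v\|)$, and you bound the measure of the remaining set by a plain union bound $\Leb(B_N)\le(N+1)\Leb(B)$. This is more elementary---no ergodic theorem, no explicit computation of the constant $C$---and the introduction of a single enveloping cone $\mathcal C^*$ depending only on $A$ cleanly handles uniformity in $\beta$. The paper's orbit-splitting method, on the other hand, is the same template it reuses for the $\lambda_{mme}$ estimates in Sections~\ref{section: estimate_mme} and~\ref{section: mme in II}, so it fits naturally into the surrounding arguments even if it is slightly heavier here.
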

\begin{proof}
By Lemma~\ref{Anosov diffeomorphism}, we have the following. For each point $p\in\mathbb T^2$, let $\mathcal C_p^\pm$ be the union of the positive cone in the tangent space at $p$ spanned by $\pmb v^\pm_{min}(-\beta)$ and $\pmb v^\pm_{max}$ (see \eqref{bounds_of_cones}) and its symmetric complement. Then, $\mathcal C_p^+\cap \mathcal C_p^- = \emptyset$, 
\begin{equation}\label{invariant cone direction}
D_p(F^w_{l,\delta})\mathcal C_p^+\subset \mathcal C_{F^w_{l,\delta}(p)}^+, \,\text{ and } \,D_p\left(F^w_{l,\delta}\right)^{-1}\mathcal C_p^-\subset \mathcal C_{\left(F^w_{l,\delta}\right)^{-1}(p)}^-. 
\end{equation}
Moreover, there exists $\mu>1$ that depends only on $A$ and $\beta$ such that 
\begin{equation}\label{general expansion}
\|D_p(F^w_{l,\delta})\pmb v\|\geq \mu\|\pmb v\| \text{ for any } \pmb v\in\mathcal C_p^+.
\end{equation}

Let $\pmb v^u = \pmb e^+(1)$ and $\pmb v^s = \pmb e^-(1)$ (see \eqref{eigenvector}). Then,

\begin{align}\label{boundaries of cone in basis of eigenvectors}
\pmb v^+_{max} &= c^u_{max}\pmb v^u+ c^s_{max}\pmb v ^s, \text{ where } \begin{pmatrix} c^u_{max}\\c^s_{max} \end{pmatrix} = \frac{1}{4
\sqrt{(a+d)^2-4}}\begin{pmatrix} (a+d)+\sqrt{(a+d)^2-4}\\-(a+d)+\sqrt{(a+d)^2-4}\end{pmatrix},\\
\pmb v^+_{min}(-\beta) &= c^u_{min}\pmb v^u+ c^s_{min}\pmb v ^s, \text{ where } \begin{pmatrix} c^u_{min}\\c^s_{min} \end{pmatrix} = \frac{1}{2
\sqrt{(a+d)^2-4}}\begin{pmatrix} \phi^+(a+d-|b|\beta)-\phi^-(a+d)\\\phi^+(a+d)-\phi^+(a+d-|b|\beta)\end{pmatrix},\nonumber
\end{align}
where $\phi^+, \phi^-$ are as in Proposition~\ref{eigenvector_eigenvalue_proposition}.

Moreover, any $\pmb v\in\mathcal C_p^+$ can be written in the form $\pmb v = \alpha_1\pmb v^+_{max}+\alpha_2\pmb v^+_{min}(-\beta)$, where $\alpha_1\alpha_2\geq 0$. In particular, for any $n\in\mathbb N$, we have $\frac{\|A^n\pmb v\|}{\|\pmb v\|}\geq\min\left\{\frac{\|A^n\pmb v^+_{max}\|}{\|\pmb v^+_{max}\|},\frac{\|A^n\pmb v^+_{min}(-\beta)\|}{\|\pmb v^+_{min}(-\beta)\|}\right\}$ if $\pmb v \neq \pmb 0$. Thus, for any $\pmb v\in\mathcal C_p^+$

\begin{equation*}
\|A^n\pmb v\|\geq e^{\Lambda n} \|\pmb v^u\| \left|\sin\angle(\pmb v^u, \pmb v^s)\right|\min\left\{\frac{c^u_{max}}{\|\pmb v^+_{max}\|},\frac{c^u_{min}}{\|\pmb v^+_{min}(-\beta)\|}\right\} \|\pmb v\|.
\end{equation*}

Notice that $\frac{c^u_{\max}}{\|\pmb v^+_{\max}\|}$ depends only on $A$. Also, we have
\begin{equation*}
\frac{c^u_{\min}}{\|\pmb v^+_{\min}(-\beta)\|} = \|\pmb v^u\|^{-1}\left(1+2\frac{c^s_{\min}}{c^u_{\min}}\cdot\frac{\|\pmb v^s\|}{\|\pmb v^u\|}\cos\angle(\pmb v^u,\pmb v^s)+\left(\frac{c^s_{\min}}{c^u_{\min}}\cdot\frac{\|\pmb v^s\|}{\|\pmb v^u\|}\right)^2\right)^{-\frac{1}{2}},
\end{equation*}
where
\begin{equation*}
\frac{c^s_{\min}}{c^u_{\min}} = \frac{2\sqrt{(a+d)^2-4}}{\phi^+(a+d-|b|\beta)-\phi^-(a+d)}-1.
\end{equation*}
Since $\phi^+(u)$ is monotonically increasing and $2(d-1)\leq \phi^+(u)<2d$ for $u>2$, we have that 
\begin{equation*}
\frac{2\sqrt{(a+d)^2-4}}{(a+d)+\sqrt{(a+d)^2-4}}-1<\frac{c^s_{\min}}{c^u_{\min}}\leq\frac{2\sqrt{(a+d)^2-4}}{(a+d)-2+\sqrt{(a+d)^2-4}}-1.
\end{equation*}

As a result, there exists a constant $C>0$ that depends only on $A$ such that
\begin{equation}\label{expansion under A}
\|A^n\pmb v\|\geq e^{\Lambda n} C\|\pmb v\|.
\end{equation}

By the Oseledec multiplicative ergodic theorem, we obtain that 
\begin{equation}\label{lambda_abs}
\lambda_{abs}(F^w_{l,\delta}) = \lim\limits_{n\rightarrow \infty}\frac{\log\|D_{p}\left(F^w_{l,\delta}\right)^n\pmb v\|}{n} 
\end{equation}
for almost every $p\in\mathbb T^2$ with respect to the Lebesgue measure and $\pmb v\in\mathcal C^+_p$. 

Let $\mathcal S = \left\{(x,y)\in\mathbb T^2|m-w\leq x\leq m+l+w\right\}$. We have $\mathrm{Leb}(\mathcal S)=l+2w$, where $\mathrm{Leb}$ is the normalized Lebesgue measure. Moreover, we recall that $F^w_{l,\delta}=L_A$ on $\mathbb T^2\setminus\mathcal S$, in particular, $D_xF^w_{l,\delta}=A$ for all $x\in\mathbb T^2\setminus \mathcal S$. Consider $p\in\mathbb T^2$ and a natural number $n$. We write
\[
n = \sum\limits_{j=1}^sn_j,
\]
where $n_1\in\{0\}\cup\mathbb N$ and the numbers $n_j\in\mathbb N$ for $j=1,\ldots,s$ are chosen in the following way:
\begin{enumerate}
    \item The number $n_1$ is the first moment when $\left(F^w_{l,\delta}\right)^{n_1}(p)\in \mathcal S$;
    \item The number $n_2$ is such that the number $n_1+n_2$ is the first moment when $\left(F^w_{l,\delta}\right)^{n_1+n_2}(p)\in \mathbb T^2\setminus \mathcal S$;
    \item The rest of the numbers are defined in the same way. For any $k\in\mathbb N$, the number $n_{2k+1}$ is such that the number $\sum\limits_{j=1}^{2k+1}n_j$ is the first moment when $\left(F^w_{l,\delta}\right)^{\sum\limits_{j=1}^{2k+1}n_j}(p)\in \mathcal S$, and the number $n_{2k+2}$ is such that the number $\sum\limits_{j=1}^{2k+2}n_j$ is the first moment when $\left(F^w_{l,\delta}\right)^{\sum\limits_{j=1}^{2k+2}n_j}(p)\in \mathbb T^2\setminus\mathcal S$.
\end{enumerate}

Let $\pmb v\in\mathcal C_p^+$ and $\|\pmb v\|=1$. Then, we have
\begin{equation*}
    \log\|D_{p}\left(F^w_{l,\delta}\right)^n\pmb v\| =\sum\limits_{j=1}^s\log\|D_{\left(F^w_{l,\delta}\right)^{n_1+n_2+\ldots+n_{j-1}}(p)}\left(F^w_{l,\delta}\right)^{n_j}\pmb v_j\|,
\end{equation*}
where $\pmb v_1=\pmb v$, $\pmb v_2 = \frac{D_{p}\left(F^w_{l,\delta}\right)^{n_1}\pmb v_1}{\|D_{p}\left(F^w_{l,\delta}\right)^{n_1}\pmb v_1\|}$, and $\pmb v_j = \frac{D_{\left(F^w_{l,\delta}\right)^{n_1+n_2+\ldots+n_{j-2}}(p)}\left(F^w_{l,\delta}\right)^{n_{j-1}}\pmb v_{j-1}}{\|D_{\left(F^w_{l,\delta}\right)^{n_1+n_2+\ldots+n_{j-2}}(p)}\left(F^w_{l,\delta}\right)^{n_{j-1}}\pmb v_{j-1}\|}$ for $j=3, \ldots, s.$ In particular, $\|\pmb v_j\|=1$ for $j=1,\ldots, s$. 

Using \eqref{general expansion} and \eqref{expansion under A}, we obtain for $k\in\mathbb N$
\begin{align*}
\|D_{\left(F^w_{l,\delta}\right)^{n_1+n_2+\ldots+n_{j-1}}(p)}\left(F^w_{l,\delta}\right)^{n_j}\pmb v_j\|&\geq e^{\Lambda n_j}C \qquad &\text{if}\qquad &j=2k-1,\\
\|D_{\left(F^w_{l,\delta}\right)^{n_1+n_2+\ldots+n_{j-1}}(p)}\left(F^w_{l,\delta}\right)^{n_j}\pmb v_j\|&\geq \mu^{n_j} \qquad &\text{if}\qquad &j=2k.
\end{align*}

As a result,
\begin{equation*}
    \log\|D_{p}\left(F^w_{l,\delta}\right)^n\pmb v\| \geq \left[\frac{s}{2}\right]\log C+\Lambda\sum\limits_{k=1}^{[\frac{s}{2}]}n_{2k-1}+(\log\mu)\sum\limits_{k=1}^{[\frac{s}{2}]}n_{2k}.
\end{equation*}

Since $F^w_{l,\delta}$ is a smooth Anosov diffeomorphism, by Birkhoff's Ergodic Theorem we obtain, that 
\begin{equation*}
\frac{1}{n}\sum\limits_{k=1}^{[\frac{s}{2}]}n_{2k-1}\rightarrow (1-l-2w) \,\,\text{ and } \,\,\frac{1}{n}\sum\limits_{k=1}^{[\frac{s}{2}]}n_{2k}\rightarrow (l+2w)\,\, \text{ as }\,\, n\rightarrow\infty. 
\end{equation*}
Moreover, each visit to $\mathcal S$ is at least one iterate, so $\limsup\limits_{n\rightarrow\infty}\left(\frac{1}{n}[\frac{s}{2}]\right)\leq (l+2w)$.

Therefore, since $\mu>1$,
\begin{align*}
    \lambda_{abs}(F^w_{l,\delta})&\geq (l+2w)\min\{\log(C\mu),\log\mu\} + \Lambda(1-l-2w)\\ &\geq (l+2w)\min\{\log(C),0\} + \Lambda(1-l-2w)\\&\rightarrow \Lambda \text{ as } l\rightarrow 0 \text{ since } 0<w<\frac{l}{4}.
\end{align*}

\end{proof}

\subsection{Estimation of $\lambda_{mme}$ in Construction I}\label{section: estimate_mme}

The results of this section can be summarized in the following lemma.
\begin{lemma}\label{lemma: bound on mme in I}
Suppose $L_A$ is an Anosov area-preserving automorphism on $\mathbb T^2$ which is induced by a matrix $A=\begin{pmatrix} a & b\\ c& d\end{pmatrix}\in SL(2,\mathbb Z)$ with $\text{trace}(A)>2$. Fix $m\in(0,1)$ and let $l\in(0,1-m)$ (see the setting of Section~\ref{construction family}). There exists $\beta_0\in\left(0,\frac{a+d-2}{|b|}\right)$ such that there exist $C$ and $\delta_0\in(0,l)$ such that there exists $Q>0$ with the following property. Let $\delta\in(0,\delta_0)$. Then, there exists $w_0=w_0(\delta)$ such that for any $w\in(0,w_0]$ a smooth area-preserving Anosov diffeomorphisms $F^w_{l,\delta}\colon\mathbb T^2\rightarrow \mathbb T^2$ defined in \eqref{definition_twist_maps} with the parameter $\beta=\beta_0$ has the following properties:
\begin{enumerate}
\item $F^w_{l,\delta} = A\left(\frac{\beta_0 l+\delta(1-\beta_0)}{\delta}\right)$ in $S_2^{w_0}(\delta)$ where $A$ is defined in \eqref{definition of A(t)} and $S_2^{w_0}(\delta)$ is defined in \eqref{fast strip in smoothed};
\item $\nu_{F^w_{l,\delta}}(S_2^{w_0}(\delta))\geq Q$ where $\nu_{F^w_{l,\delta}}$ is the measure of maximal entropy of $F^w_{l,\delta}$;
\item $\lambda_{mme}(F^w_{l,\delta})\geq Q\log\mu^+\left(\frac{\beta_0l+\delta(1-\beta_0)}{\delta}\right)+(1-Q)\log C$ where $\mu^+(\cdot)$ is defined in \eqref{eigenvalue}.
\end{enumerate} 


\end{lemma}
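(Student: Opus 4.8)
\emph{Proof strategy.} Write $F := F^w_{l,\delta}$, $t_1 := 1-\beta_0$ and $t_2 := \frac{\beta_0 l+\delta(1-\beta_0)}{\delta}$. By \eqref{derivative_twist}, away from the three $w$-neighbourhoods of the break points $F$ coincides with one of the linear automorphisms $L_A$, $A(t_1)$ (on a ``slow'' strip of $x$-width $\approx l$) and $A(t_2)$ (on $S_2^{w_0}(\delta)$, of $x$-width $\approx\delta$), where $A(\cdot)$ is as in \eqref{definition of A(t)}. The plan is: fix a suitable $\beta_0$ (the freedom in this choice is used to keep the Markov partition below manageable), deduce item (1) directly, reduce item (3) to item (2) by an elementary estimate of the Lyapunov integral, and prove item (2) — the only substantial point — by building an explicit Markov partition for $F$, identifying $\nu_F$ with the associated Parry (Bowen--Margulis) measure, and bounding below the mass it assigns to the fast strip.

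\emph{Items (1) and (3).} Item (1) is immediate once $S_2^{w_0}(\delta)$ is taken to be the sub-strip on which $D_x f^w_{l,\delta}\equiv t_2$ (non-empty once $w_0<\tfrac{\delta}{4}$ is small relative to $\delta$): there $D_{(x,y)}F = A(D_xf^w_{l,\delta}) = A(t_2)$ by \eqref{derivative_twist}. For item (3), I would use \eqref{Lyapunov exponent through integral} together with $F_*\nu_F=\nu_F$ to write $\lambda_{mme}(F) = \int_{\mathbb T^2}\log\|DF|_{E^u}\|\,d\nu_F$ and split the integral over $S_2^{w_0}(\delta)$ and its complement. On the complement, $\log\|DF|_{E^u}\|$ is bounded below by a constant depending only on $A,\beta_0$, since $E^u\subset\mathcal C^+$ and \eqref{general expansion} holds. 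On $S_2^{w_0}(\delta)$, where $DF=A(t_2)$, I would use the elementary observation that $A(t_2) = A + (t_2-1)\left(\begin{smallmatrix}|b|&0\\ \sgn(b)d&0\end{smallmatrix}\right)$, whose leading term as $t_2\to\infty$ is rank one with kernel direction $(0,1)$ transverse to the fixed cone $\mathcal C^+$ of Lemma~\ref{Anosov diffeomorphism}, so that $\|A(t_2)\pmb v\|\geq c_0^{-1}\mu^+(t_2)\|\pmb v\|$ for all $\pmb v\in\mathcal C^+$ once $t_2$ is large, with $c_0$ depending only on $A,\beta_0$ and $\mu^+$ as in \eqref{eigenvalue}. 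Choosing $\delta_0$ small enough (after $c_0$ and the complementary constant) that $t_2 = \tfrac{\beta_0 l}{\delta}+(1-\beta_0)$ is large throughout $(0,\delta_0)$, the resulting lower bound for $\lambda_{mme}(F)$ is increasing in $\nu_F(S_2^{w_0}(\delta))$, so item (2) yields item (3) for a suitable constant $C$.

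\emph{Item (2): the main point.} What remains is the lower bound $\nu_F(S_2^{w_0}(\delta))\geq Q>0$, uniform over $\delta\in(0,\delta_0)$ and $w\leq w_0(\delta)$. Here I would argue as follows. For $w=0$ the map $F$ is piecewise linear, hence its stable and unstable manifolds are broken line segments whose directions take only the three values given by the eigendirections of $L_A$, $A(t_1)$, $A(t_2)$, changing at the strip boundaries; this makes it possible to write down an explicit Markov partition $\mathcal R=\{R_1,\dots,R_N\}$ of $F$ with boundaries along such broken invariant segments, adapted to the strip decomposition, so that $S_2^{w_0}(\delta)$ is a union of some of its elements $\{R_i\}_{i\in I}$ (for $w\leq w_0(\delta)$ small the partition persists after perturbing off the $w=0$ model). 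Then $\nu_F$ is the Parry measure of the subshift of finite type $(\Sigma_M,\sigma)$ determined by $\mathcal R$, whose transition matrix $M$ has Perron eigenvalue $e^{h_{top}(F)} = e^{h_{top}(L_A)} = e^{\Lambda}$ by topological conjugacy, and $\nu_F(S_2^{w_0}(\delta)) = \big(\textstyle\sum_{i\in I}u_iv_i\big)/\langle u,v\rangle$ for the left/right Perron eigenvectors $u,v$ of $M$. The hard part will be to show this quantity does not degenerate as $\delta\to 0$: the large twist forces the number of ``columns'' of the coding that meet the fast strip to grow in proportion to $t_2$, while the Perron eigenvalue stays pinned at $e^\Lambda$ and the relevant ratios of the Perron data remain controlled, so that a definite fraction of the symbolic dynamics is forced to pass through $S_2^{w_0}(\delta)$. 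Quantifying the combinatorics of the Markov partition and extracting this uniform-in-$\delta$ bound from Perron--Frobenius theory is the crux of Section~\ref{section: estimate_mme}. (The same explicit partition also yields the continuity of $\nu_F$ in the parameters asserted in Remark~\ref{remark about continuity}, since $M$ is locally constant in $(l,\delta,w)$ while the rectangles move continuously.)
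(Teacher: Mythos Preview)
Your treatment of items (1) and (3) is fine and essentially matches the paper; for (3) the paper uses an orbit-decomposition/Birkhoff argument rather than the direct integral split, but these are equivalent.

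The gap is in item (2). You propose a Markov partition $\mathcal R$ with $S_2^{w_0}(\delta)$ equal to a union of partition elements, but this cannot be arranged: the boundary of a Markov rectangle must lie in stable and unstable manifolds, whereas $\partial S_2^{w_0}(\delta)$ consists of the vertical lines $x=m+l-\delta+w$ and $x=m+l-w$, which are not leaves of either foliation. Your final parenthetical (``$M$ is locally constant in $(l,\delta,w)$'') is also in tension with your earlier picture of the number of columns growing with $t_2\sim\delta^{-1}$; a partition whose transition matrix is locally constant cannot simultaneously have $S_2^{w_0}(\delta)$ as an exact union of a $\delta$-dependent number of elements. So as written the plan for the uniform lower bound does not get off the ground.

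What the paper does instead is the following geometric observation, which you are missing: although the strip $S_2(\delta)$ has $x$-width $\delta\to 0$, any segment of $W^s((0,0))$ crossing $S_2(\delta)$ has \emph{vertical} extent at least $\tfrac12\beta_0 l$, independent of $\delta$ (because on $S_2$ the stable direction is nearly vertical with slope $\sim t_2$). Combined with Lemmas~\ref{stable_size} and~\ref{unstable_size}, which show that the $s$- and $u$-sizes of the Adler--Weiss refinements $\mathcal R^n$ contract by a factor $\nu<1$ independent of $\delta$, this yields a fixed refinement level $n_0$ (depending only on $\beta_0,\delta_0$, not on $\delta$) such that some $R\in\mathcal R^{n_0}$ lies entirely inside $S_2(\delta)$ (Lemma~\ref{element inside}). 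Since $n_0$ is fixed, the Parry measure of such an $R$---hence $\nu_F(S_2^{w_0}(\delta))$---is bounded below by some $Q>0$ uniformly in $\delta$. The passage to $w>0$ is then a straightforward perturbation (Lemma~\ref{element_inside_for_smooth}). The point is not to track Perron data through a $\delta$-dependent matrix, but to find one rectangle of bounded coding depth inside the fast strip.
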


The key ingredient to estimate $\lambda_{mme}$ is the construction of a Markov partition, which allows us to represent dynamical systems by symbolic systems (see \cite[Section 18.7]{KatokHasselblatt} for more details). We use the Adler-Weiss construction of a Markov partition \cite{AdlerWeiss} to construct a Markov partition of $F^w_{l,\delta}$.

Let $p\in\mathbb T^2$. Then, 
\begin{equation*}
W_w^s(p) = \{z\in\mathbb T^2\,|\, \lim\limits_{n\rightarrow\infty}\dist\left(\left(F^w_{l,\delta}\right)^n(z),\left(F^w_{l,\delta}\right)^n(p)\right)=0\}
\end{equation*}
 and 
\begin{equation*}
W_w^u(p) = \{z\in\mathbb T^2\,|\, \lim\limits_{n\rightarrow\infty}\dist\left(\left(F^w_{l,\delta}\right)^{-n}(z),\left(F^w_{l,\delta}\right)^n(p)\right)=0\}
\end{equation*}
are the stable and unstable manifolds of $F^w_{l,\delta}$ at $p$, respectively. Moreover, for any $\eps>0$ let $W_w^{i}(p,\eps)$ be the $\eps$-neighborhood of $p$ in $W_w^{i}(p)$, where $i=u,s$. We denote $F_{l,\delta}=F^0_{l,\delta}$, and $W^i(p)=W^i_0(p)$ for $i=u,s$. 

Let $\pmb v^u = \pmb e^+(1)$ and $\pmb v^s = \pmb e^-(1)$ (see \eqref{eigenvector}). Since $F^w_{l,\delta}=L_A$ in a neighborhood of $(0,0)$ by the construction of $F^w_{l,\delta}$, there exists $\kappa>0$ such that for any sufficiently small $w\geq 0$, we have 
\begin{equation*}
W_w^{i}((0,0),\kappa) = \left\{(x,y)\,|\, \dist((x,y),(0,0))\leq\kappa, \, \left\langle \begin{pmatrix}-y\\x\end{pmatrix},\pmb v^i\right\rangle=0\right\} \text{ for } i=u,s. 
\end{equation*}
Moreover, since $(0,0)$ is a fixed point for $F^w_{l,\delta}$, we have 
\begin{equation}\label{stable_unstable_mfld_image}
W^s_w((0,0)) = \bigcup\limits_{n\in\mathbb N}\left(F^w_{l,\delta}\right)^{-n}\left(W_w^{s}((0,0),\kappa)\right)\, \text{ and } \, W^u_w((0,0)) = \bigcup\limits_{n\in\mathbb N}\left(F^w_{l,\delta}\right)^{n}\left(W_w^{u}((0,0),\kappa)\right).
\end{equation}

\subsubsection*{Markov partition for $F_{l,\delta}$.}

Let us draw segments of $W^u((0,0))$ and $W^s((0,0))$ until they cross sufficiently many times and separate $\mathbb T^2$ into two disjoint (curvilinear) rectangles $R_1, R_2$. Define $\mathcal R$ to be the partition into rectangles determined by $R_i\cap F_{l,\delta}(R_j)$, where $i,j=1,2$. See Figure~\ref{initial_Markov} for an example. For $n\in\mathbb N$ let $\mathcal R^n$ be the partition into rectangles generated by $\left(F_{l,\delta}\right)^i(R')\cap \left(F_{l,\delta}\right)^j(R'')$, where $R', R''\in\mathcal R$ and $i,j=-n,-n+1,\ldots, n-1,n$. Let $\mathcal R^0 = \mathcal R$. Note that the $\mathcal R^n$ depend on $A$, $\beta$, $m$, $l$, and $\delta$ even though we do not emphasize this in the notation. By the construction, for each $R\in\mathcal R^n$, we have that two opposite boundaries of $R$ are contained in $W^u((0,0))$ and the other two are contained in $W^s((0,0))$.

\begin{figure}[ht]
      \centering
			\begin{tikzpicture}[scale=0.4]
			\draw[dashed] (1.708,-2.764) -- (13.42,4.47);
			\draw[dashed] (1.055,-1.707) -- (12.76,5.528);
			\draw[dashed] (2.764,5.528) -- (10,10);
			\draw[dashed] (2.764, -4.48) -- (14.472,2.764);
			\draw[dashed] (2.764, -4.48) -- (0,0);
			\draw[dashed] (2.764,5.528) -- (4.47,2.76);
			\draw[dashed] (0,0)--(11.708,7.236);
			\draw[dashed] (10,10)--(14.472,2.764);
			\draw[dashed] (3.4,4.5) -- (10.63,8.98);
			\draw[blue] (0,0)--(10,0)--(10,10)--(0,10)--(0,0);
			\draw[line width=0.2mm] (0,0)--(4.47,2.76)--(11.71,7.24);
			\draw[line width=0.2mm] (1.065,-1.723) --(2.88,-0.58) --(6.91,1.91)--(11.16,4.242)--(12.839,5.406);
			\draw[line width=0.2mm] (1.708,-2.764)--(3.09,-1.91)--(7.305,0.425)--(9.64,2.13)--(13.42,4.47);
			\draw[line width=0.2mm] (2.78,-4.5)--(3.46,-4.04)--(14.472,2.764);
			\draw[line width=0.2mm] (3.4,4.5) -- (10.63,8.98);
			\draw[line width=0.2mm] (2.78,5.5) -- (3.46,5.96) -- (10,10);
			\draw[line width=0.2mm] (0,0)--(2.78,-4.5);
			\draw[line width=0.2mm] (2.78,5.5)--(4.47,2.76);
			\draw[line width=0.2mm] (10,10) -- (14.472, 2.764);
			\node at (-0.5,-0.5) {\footnotesize (0,0)};
			\node at (10.5,10.5) {\footnotesize (1,1)};
			\end{tikzpicture}
      \caption{The image of the partition $\mathcal R$ (in black, solid) for $F_{l,\delta}$ which is a perturbation of $L_A$ where $A=\usebox{\smlmat}$. Some boundaries are compared to the partition of $L_A$ (in black, dashed). $\mathbb T^2$ is shown in blue.}
      \label{initial_Markov}
\end{figure}

We have the following property for the constructed partition.

\begin{lemma}\label{element inside}
There exists $\beta_0\in(0,\frac{a+d-2}{|b|})$ and there exists $\delta_0(\beta_0)\in(0,l)$ such that there exists $n_0\in\mathbb N$ with the following property. Let $\delta\in(0,\delta_0)$ and $\mathcal R$ be the Markov partition for $F_{l,\delta}$ (described above) with $\beta=\beta_0$. Then, there exists $R\in\mathcal R^{n_0}$ such that $R\subset \{(x,y)\in\mathbb T^2| x\in(m+l-\delta,m+l)\}$.
\end{lemma}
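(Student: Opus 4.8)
The plan is to exploit the explicit description of the map $F_{l,\delta}$ on the thin strip $\{m+l-\delta<x\leq m+l\}$, where it is exactly the linear map $A(\tau)$ with $\tau=\frac{\beta l+\delta(1-\beta)}{\delta}$, together with the fact that $A(\tau)$ has a very large expansion rate $\mu^+(\tau)$ when $\delta$ is small (since $\tau\to\infty$ as $\delta\to 0$ for fixed $\beta, l$). The key geometric point is that on this strip the unstable direction of $F_{l,\delta}$ is the fixed direction $\pmb e^+(\tau)$ (the expanding eigenvector of $A(\tau)$), which by Proposition~\ref{eigenvector_eigenvalue_proposition} is close to the horizontal direction $\pmb v^+_{max}=(b,d)^{\top}$ for $\delta$ small; meanwhile the stable direction is $\pmb e^-(\tau)$, which is close to vertical. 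So an unstable segment of $W^u((0,0))$ passing through the strip is nearly horizontal there, while the strip has width $\delta$ in the $x$-direction, hence length $\asymp \delta$ along a nearly-horizontal curve. Applying a single iterate of $F_{l,\delta}$ stretches it by $\mu^+(\tau)\asymp \beta l/\delta$, i.e.\ by a factor that grows like $1/\delta$, independently of how small $\delta$ is.

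Concretely, I would first \textbf{fix $\beta_0\in(0,\frac{a+d-2}{|b|})$} and analyze the partition $\mathcal R$ (drawn from finitely many arcs of $W^u((0,0))$ and $W^s((0,0))$). Because these manifolds are obtained from \eqref{stable_unstable_mfld_image} by iterating the local pieces through $(0,0)$, and because $F_{l,\delta}$ depends smoothly on $\delta$ with $F_{0^+}$ a mild perturbation of $L_A$, for $\delta$ small the arcs of $W^u$ and $W^s$ used in building $\mathcal R$ cross the strip $\{m+l-\delta<x<m+l\}$ almost horizontally (resp.\ almost vertically), with controlled geometry. Then: \emph{Step 1} — show there is a fixed $n_1$ such that some element $R_1\in\mathcal R^{n_1}$ lies in the slightly larger region $\{m+l-\delta_0<x<m+l\}$ and meets the thin strip with full ``unstable width'', i.e.\ $R_1$'s unstable boundary arcs cross the strip; this is a statement about $L_A$'s Markov partition perturbed slightly, uniform in $\delta<\delta_0$. \emph{Step 2} — iterate forward: since on the strip the unstable expansion is $\mu^+(\tau)\to\infty$ as $\delta\to 0$, after finitely many more refinements (say up to $\mathcal R^{n_0}$ with $n_0$ still uniform in $\delta$, once $\delta_0$ is chosen small enough) the refinement partition subdivides $R_1$ along stable leaves finely enough that at least one sub-rectangle $R\in\mathcal R^{n_0}$ is entirely contained in $\{m+l-\delta<x<m+l\}$. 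The role of the huge expansion is precisely to guarantee that the stable-direction ``cuts'' coming from applying $(F_{l,\delta})^{-1}$ to the partition boundaries produce a piece small enough in the $x$-direction to fit inside the strip of width $\delta$.

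The \textbf{main obstacle} is making Step 2 uniform in $\delta$: a priori the number of refinements needed to trap a rectangle inside a strip of width $\delta$ could grow as $\delta\to 0$, which would be useless since we need a single $n_0$ working for all $\delta\in(0,\delta_0)$. The resolution is that each backward iterate of a partition boundary that passes through the strip gets its transverse ($x$-direction) extent \emph{divided} by $\mu^+(\tau)\asymp \beta_0 l/\delta$, so a single backward iterate already cuts the $x$-width down by a factor $\asymp \delta$; thus \emph{one} extra refinement beyond Step 1 suffices, provided $\delta_0$ is taken small enough that $\mu^+\!\big(\tfrac{\beta_0 l + \delta_0(1-\beta_0)}{\delta_0}\big)$ exceeds the relevant comparison constant (the ratio of the diameter of $R_1$ to $\delta$, which is itself $\lesssim 1/\delta$ with a $\delta$-uniform implied constant). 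Balancing these two $1/\delta$ factors is where one must be slightly careful, but it closes: choose $\delta_0$ so small that after one controlled backward cut the resulting sub-rectangle has $x$-extent $<\delta$ and lies in the strip, and set $n_0=n_1+1$. The remaining verifications — that the objects produced really are elements of $\mathcal R^{n_0}$ (not just of some ad hoc refinement), and that $\beta_0,\delta_0,n_0$ can be fixed before $\delta$ — are routine given the Adler–Weiss construction and the explicit linearity of $F_{l,\delta}$ on the strip.
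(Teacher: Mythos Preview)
Your approach differs substantially from the paper's, and the sketch has a real gap at exactly the point you flag as the ``main obstacle'': making $n_0$ independent of $\delta$. Your proposed resolution --- balancing two $1/\delta$ factors via a single backward cut --- does not close as written. For it to work you would need a stable boundary arc added at level $n_1+1$ to land inside the thin strip and slice $R_1$ at $x$-scale $\lesssim\delta$; you give no mechanism guaranteeing such an arc falls there. Your Step~1 is also shaky: you produce $R_1$ by asserting the partition is ``$L_A$'s Markov partition perturbed slightly, uniform in $\delta<\delta_0$'', but as $\delta\to 0$ the map $F_{l,\delta}$ does \emph{not} converge to $L_A$ --- it converges to the piecewise-linear map equal to $L_A$ on $S_3$ and to $A(1-\beta_0)$ on the full strip $\{m<x<m+l\}$ --- so the perturbation heuristic needs a different anchor.

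The paper sidesteps the balancing problem entirely via a geometric observation you do not use. In $S_2(\delta)$ the \emph{stable} direction is nearly vertical: computing $\tilde A^{-1}\pmb v$ for $\pmb v\in\mathcal C^-$ with $\tilde A=A(\tau)$ shows the slope of any stable segment there is $\asymp\beta_0(l-\delta)/\delta$, so a segment of $W^s((0,0))$ crossing $S_2(\delta)$ from $x=m+l-\delta$ to $x=m+l$ has \emph{vertical extent at least $\tfrac12\beta_0 l$}, a fixed lower bound independent of $\delta$. This converts the task from ``trap a rectangle of $x$-width $<\delta$'' into a fixed-scale problem: find two stable boundary segments $I,J\subset\partial_s(\mathcal R^{n_1})\cap S_2(\delta)$ vertically separated by $\geq\tfrac14\beta_0 l$, and two unstable boundary segments $W,V\subset\partial_u(\mathcal R^{n_2})\cap S_2(\delta)$ crossing both of them inside the strip. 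The paper then invokes Lemmas~\ref{stable_size} and~\ref{unstable_size}, proved just beforehand, which give contraction rates $\nu_s,\nu_u\in(0,1)$ for the $s$- and $u$-sizes of $\mathcal R^n$ that are \emph{uniform in $\delta<\delta_0$}. These two lemmas are the real engine behind the $\delta$-uniformity, and your proposal does not mention them. Once one has $\nu_s,\nu_u$, choosing $n_1,n_2$ so that the $u$- and $s$-sizes of the refinements drop below the fixed threshold $\tfrac14\beta_0 l$ gives $n_0=\max\{n_1,n_2\}$ working for every $\delta<\delta_0$.
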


Lemma~\ref{element inside} will follow from the lemmas below. First, we introduce some notation and definitions.

Denote
\begin{align}\label{different regions}
   S_1(\delta)=\{(x,y)\in\mathbb T^2&|x\in[m,m+l-\delta]\}; \; S_2(\delta)=\{(x,y)\in\mathbb T^2|x\in[m+l-\delta,m+l]\}; 
\\
    &S_3=\mathbb T^2\setminus\{(x,y)\in\mathbb T^2|x\in(m,m+l)\}.\nonumber
		\end{align}

\begin{definition}
Let $n\in\mathbb N$ and $\mathcal R^n$ be as described above. Let $R\in\mathcal R^n$. The \underline{$s$-size of $R$}, denoted by $d_s(R)$, is the distance in the $\pmb v^s$ direction between the two opposite boundaries (or their extensions) of $R$ that are contained in $W^u((0,0))$. The \underline{$s$-size of $\mathcal R^n$} is $\tilde d_s(\mathcal R^n) = \max\limits_{R\in\mathcal R^n} d_s(R)$.
\end{definition}

\begin{definition}
Let $n\in\mathbb N$ and $\mathcal R^n$ be as described above. Let $R\in\mathcal R^n$. The \underline{$u$-size of $R$}, denoted by $d_u(R)$, is the distance in the $\pmb v^u$ direction between the two opposite boundaries (or their extensions) of $R$ that are contained in $W^s((0,0))$. The \underline{$u$-size of $\mathcal R^n$} is $\tilde d_u(\mathcal R^n) = \max\limits_{R\in\mathcal R^n} d_u(R)$. 
\end{definition}

\begin{lemma}\label{stable_size}
Consider the setting above. Then, there exists $\beta_s\in(0,\frac{a+d-2}{|b|})$ such that for any $\beta\in(0,\beta_s)$ there exists $\delta_s=\delta_s(\beta)\in(0,l)$ such that there exists a constant $\nu_s\in(0,1)$ with the following properties. Let $\delta\in(0,\delta_s)$ and $\mathcal R$ be the partition for $F_{l,\delta}$. Then, for any $n\in\mathbb N$, and any $R\in\mathcal R^n$ we have
$$d_s(F_{l,\delta}(R))<\nu_s d_s(\mathcal R^n) .$$
\end{lemma}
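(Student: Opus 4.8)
The plan is to follow how $F_{l,\delta}$ distorts the $\pmb v^s$-direction, using that (with $w=0$) the map is \emph{piecewise affine}: its linear part is $A=A(1)$ on $S_3$, $A(1-\beta)$ on $S_1(\delta)$, and $A\!\left(\tfrac{\beta l+\delta(1-\beta)}{\delta}\right)$ on $S_2(\delta)$ (see \eqref{definition of A(t)}, \eqref{derivative_twist}). First I would work in the basis $(\pmb v^u,\pmb v^s)=(\pmb e^+(1),\pmb e^-(1))$, in which $A=\operatorname{diag}(\mu^+(1),\mu^-(1))$ with $\mu^+(1)=e^\Lambda$, and write the matrix of $A(t)$ in this basis as $\left(\begin{smallmatrix}p(t)&q(t)\\ r(t)&s(t)\end{smallmatrix}\right)$. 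Since $W^u((0,0))$ and $W^s((0,0))$ are invariant, the unstable boundary curves of any $R\in\mathcal R^n$ are broken line segments with directions in the unstable cone $\mathcal C^+$ of Lemma~\ref{Anosov diffeomorphism}; in this basis their slopes lie in a bounded interval $I=I(A,\beta)$ that avoids the slope $-1$ of the vertical direction (the vertical line is not in $\mathcal C^+$ because the bounding rays of $\mathcal C^+$, one of which is $\pmb v^+_{max}=(b,d)$, have first coordinates of the same nonzero sign). A short computation with an affine map of linear part $A(t)$ shows it sends a rectangle with unstable-boundary slope $\mu$ to one with slope $\tfrac{r(t)+s(t)\mu}{p(t)+q(t)\mu}$, multiplying the $\pmb v^s$-gap between the two unstable boundaries by
\begin{equation*}
\kappa(t,\mu)=\frac{1}{|p(t)+q(t)\mu|}.
\end{equation*}
So the lemma reduces to producing $\beta_s,\ \delta_s=\delta_s(\beta),\ \nu_s\in(0,1)$ with $\kappa(t,\mu)\le\nu_s$ for all $\mu\in I$ and all $t\in\{1,\,1-\beta,\,\tfrac{\beta l+\delta(1-\beta)}{\delta}\}$.

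The useful observation is that $p(t)+q(t)\mu$ is the $\pmb v^u$-coordinate of $A(t)(\pmb v^u+\mu\pmb v^s)$. On $S_3$ ($t=1$): $\kappa(1,\mu)=e^{-\Lambda}<1$ for all $\mu$. On $S_1$ ($t=1-\beta$): $(p,q)$ depends continuously on $t$ with $(p(1),q(1))=(e^\Lambda,0)$, and $I$ remains bounded as $\beta\to0$, so $|p(1-\beta)+q(1-\beta)\mu|\to e^\Lambda$ uniformly in $\mu\in I$; picking $\beta_s$ small makes $\kappa(1-\beta,\cdot)<1$ — this is the role of $\beta_s$. On $S_2$ ($t=T_0:=\tfrac{\beta l+\delta(1-\beta)}{\delta}$): by \eqref{definition of A(t)}, $A(T_0)\pmb w=A\pmb w+(T_0-1)\sgn(b)\,w_1\,\pmb v^+_{max}$ with $w_1$ the first (standard) coordinate of $\pmb w$; for $\pmb w=\pmb v^u+\mu\pmb v^s$ we get $|w_1|=2|b|\,|1+\mu|$, bounded below uniformly over $\mu\in I$ since $-1\notin I$, so $\|A(T_0)(\pmb v^u+\mu\pmb v^s)\|\to\infty$ uniformly as $\delta\to0$; as $A(T_0)(\pmb v^u+\mu\pmb v^s)$ stays in $\mathcal C^+$, a fixed fraction of its norm is carried by its $\pmb v^u$-coordinate, hence $\kappa(T_0,\mu)\to0$ uniformly and a small $\delta_s$ makes $\kappa(T_0,\cdot)<1$ — this is the role of $\delta_s$. (Lemma~\ref{Anosov diffeomorphism} already gives $|p(T_0)+q(T_0)\mu|$ at least a constant times the cone-expansion factor $\mu_0>1$; shrinking $\delta$ merely absorbs the aperture constant.) Take $\nu_s$ to be the largest of the three bounds.

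To finish, for $R\in\mathcal R^n$ I would cut $R$ along the vertical lines $x=m,\ m+l-\delta,\ m+l$ into pieces lying in single linearity regions, apply the affine identity on each (at a fixed position the two unstable boundaries lie in the same region, being joined by a short stable segment), use $\kappa\le\nu_s$ everywhere together with the fact that $F_{l,\delta}$ is a homeomorphism whose image of $R$ is again a rectangle with unstable boundary a broken line of slopes in $I$, and read off $d_s(F_{l,\delta}(R))\le\nu_s\,d_s(R)\le\nu_s\,\tilde d_s(\mathcal R^n)$. I expect the main obstacle to be the geometric bookkeeping hidden in this last step and in the slope bounds: the unstable boundaries of the $\mathcal R^n$ are neither exactly straight nor exactly parallel (their directions merely lie in the genuinely wide cone $\mathcal C^+$), and a rectangle can straddle the interfaces between $S_1,S_2,S_3$; turning the exact affine identity into the stated inequality with a single $\nu_s$ requires quantifying these deviations via the uniform separation of $\mathcal C^+$ and $\mathcal C^-$ and the bounded geometry of $W^{u/s}((0,0))$ — taking $\beta$ small to confine the $S_1$-distortion and $\delta$ small so that the strong $S_2$-contraction absorbs whatever remains.
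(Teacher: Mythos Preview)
Your region-by-region analysis is the same as the paper's: in the eigenbasis your contraction factor $\kappa(t,\mu)=|p(t)+q(t)\mu|^{-1}$ is exactly the paper's ratio $\|\pmb v^s\|/\|\pmb u(t)\|$ (with $\pmb u(t)=A(t)^{-1}\pmb v^s$), and your three bounds (trivial on $S_3$, continuity in $\beta$ on $S_1$, blow-up as $\delta\to0$ on $S_2$) correspond to the paper's Cases~1--3. Your check that $-1\notin I$ (so $|w_1|=2|b|\,|1+\mu|$ is bounded below) is correct and is implicit in the paper's use of the cone $\mathcal C^+$.

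The gap is in your gluing step, and it is not just bookkeeping. You propose to cut $R$ in the \emph{domain} by the vertical lines $x=m,\,m+l-\delta,\,m+l$ and apply the affine identity on each piece; you justify this with ``at a fixed position the two unstable boundaries lie in the same region, being joined by a short stable segment.'' That claim is false in the relevant sense: the quantity $d_s(F_{l,\delta}(R))$ is realized by a segment $[p_1,p_2]$ in the \emph{image} with direction $\pmb v^s$, and its preimage is \emph{not} in the $\pmb v^s$-direction (it has direction $\pmb u(t_i)$ on each piece), so the preimages of $p_1$ and $p_2$ can lie in different $S_i$ even though $[p_1,p_2]$ itself is a single stable segment. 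Your $\kappa(t,\mu)$ identity holds only for two \emph{parallel} lines inside a single linearity region; when $[p_1,p_2]$ crosses several $F_{l,\delta}(S_i)$, the preimage is a broken line and the pieces do not add up to a single segment with length $\le d_s(R)$ without further argument.

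The paper therefore works in the opposite direction: fix $[p_1,p_2]\subset F_{l,\delta}(R)$, partition it by the images $F_{l,\delta}(S_i)$, pull each piece back to a segment of direction $\pmb u(t_i)$, and then compare that broken preimage to an auxiliary $\pmb v^s$-segment $Q$ in the domain joining the two $W^u$-boundaries (so $L(Q)\le \tilde d_s(\mathcal R^n)$). This comparison is genuinely geometric (triangles and trapezoids with sides along $\pmb v^s$, $\pmb u(t_i)$, and the cone edges $\pmb v^+_{\min}(-\beta)$, $\pmb v^+_{\max}$), and it occupies the paper's Cases~4--7; the worst case is when $[p_1,p_2]$ meets all three regions. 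Note also that the paper only proves (and you only need) $d_s(F_{l,\delta}(R))\le\nu_s\,\tilde d_s(\mathcal R^n)$; the intermediate inequality $d_s(F_{l,\delta}(R))\le\nu_s\,d_s(R)$ that you write is stronger than what your argument would deliver once you carry out the comparison via $Q$. Your instincts about the roles of $\beta_s$ and $\delta_s$ are right, but the actual proof lives in this pull-back-and-compare step, not in the domain-side cutting you sketched.
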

\begin{proof}
Let $\delta\in(0,l)$ and $\beta\in(0,\frac{a+d-2}{|b|})$. Consider the partition $\mathcal R$ for $F_{l,\delta}$. Let $R\in\mathcal R^n$ and $d_s(R)=r$. Define $p_1, p_2\in W^u((0,0))$ be the points on the opposite boundaries of $F_{l,\delta}(R)$ such that the segment $[p_1,p_2]$ has direction $\pmb v^s$. Partition $[p_1,p_2]$ into the minimal number of segments such that each segment is fully contained in one of the regions $F_{l,\delta}(S_1)$, $F_{l,\delta}(S_2)$, and $F_{l,\delta}(S_3)$.

Recall that $F_{l,\delta}$ is linear in $S_1$, $S_2$, and $S_3$, so it takes a piece of a line to a piece of a line.

First, we find the directions of lines in $S_1$ and $S_2$ that are taken to lines with the direction $\pmb v^s$. Let $A(t)$ be as in Proposition~\ref{eigenvector_eigenvalue_proposition}. By the definition of $F_{l,\delta}$, we have $DF_{l,\delta} = A(1-\beta)$ on $S_1$ and $DF_{l,\delta} = A\left(\frac{\beta l+\delta(1-\beta)}{\delta}\right)$ on $S_2$.

It is easy to see that $A(t)\pmb u(t) = \pmb v^s$ if and only if 
\begin{equation*}
\pmb u (t)= \begin{pmatrix}u_1(t)\\u_2(t) \end{pmatrix}= \left(A(t)\right)^{-1}\pmb v^s = e^\Lambda \pmb v^s+\sgn(b)(t-1)\begin{pmatrix}0\\ -dv^s_1+bv^s_2\end{pmatrix},
\end{equation*}
where $\pmb v^s = \begin{pmatrix}v^s_1\\v^s_2\end{pmatrix}$ and $\Lambda = h_{top}(L_A)$.

As a result, for any $\eps_1>0$ there exists $\beta_1=\beta_1(\eps_1, A)\in\left(0,\frac{a+d-2}{|b|}\right)$ such that for any $\beta\in (0,\beta_1(\eps_1))$ we have $\frac{\|\pmb v^s\|}{\|\pmb u(1-\beta)\|}< e^{-\Lambda}+\eps_1$. Moreover, for any $\eps_2>0$ there exists $\delta_1=\delta_1(\eps_2,A,l,\beta)\in(0,l)$ such that for any $\delta\in(0,\delta_1(\eps_2))$, we have $\frac{\|\pmb v^s\|}{\left\|\pmb u\left(\frac{\beta l +\delta(1-\beta)}{\delta}\right)\right\|}<\eps_2$.

Also,
\begin{align}\label{slope of u(t)}
    \frac{u_2(t)}{u_1(t)} &= \frac{v_2^s}{v_1^s} +\sgn(b)(t-1)e^{-\Lambda}\left(-d+b\frac{v_2^s}{v_1^s}\right) \\&=\frac{v_2^s}{v_1^s} -\sgn(b)(t-1)\frac{1}{4}(d+a-\sqrt{(a+d)^2-4})(d+a+\sqrt{(a+d)^2-4})\nonumber\\&= \frac{v_2^s}{v_1^s} -\sgn(b)(t-1).\nonumber
\end{align}

According to the definition of $F_{l,\delta}$, the construction of $\mathcal R^n$, and \eqref{invariant cone direction}, $W^u((0,0))$ is a piecewise linear curve with linear pieces having directions in the cone spanned by $\pmb v^+_{min}(-\beta)$ and $\pmb v^+_{max}$.

Denote by $L(I)$ the length of $I\subset \mathbb R^2$ in the standard Euclidean metric. We estimate $L([p_1,p_2])$. In the following discussion the phrase ``a side has direction $\pmb v$" means ``a side is contained in a line with direction $\pmb v$".

\underline{Case 1:} Assume $[p_1,p_2]\subset F_{l,\delta}(S_3)$. Then, $\left(F_{l,\delta}\right)^{-1}([p_1,p_2])$ has direction $\pmb v^s$ and $F_{l,\delta} = L_{A}$ on $S_3$, so $L([p_1,p_2])\leq e^{-\Lambda}d_s(\mathcal R^n)$.

\underline{Case 2:} Assume $[p_1,p_2]\subset F_{l,\delta}(S_1)$. 

Let $\Delta D_1D_2D_3$ be a triangle such that the following hold: $D_1D_2$ has direction $\pmb u(1-\beta)$, $D_1D_3$ has direction $\pmb v^s$, $L(D_1D_3)=d_s(\mathcal R^n)$, and $D_2D_3$ has direction $\pmb v^+_{min}(-\beta)$. Then, for sufficiently small $\beta$ which depends only on $A$,

\begin{equation*}
L(D_1D_2)= d_s(\mathcal R^n)\left(1+\frac{2|b|\beta}{\sqrt{(a+d)^2-4}+\sqrt{(a+d-|b|\beta)^2-4}-|b|\beta}\right)\sqrt{\frac{1+\left(\frac{v^s_2}{v^s_1}+\sgn(b)\beta\right)^2}{1+\left(\frac{v^s_2}{v^s_1}\right)^2}}.
\end{equation*}

Let $\Delta T_1T_2T_3$ be a triangle such that the following hold: $T_1T_2$ has direction $\pmb u(1-\beta)$, $T_1T_3$ has direction $\pmb v^s$, $L(T_1T_3)=d_s(\mathcal R^n)$, and $T_2T_3$ has direction $\pmb v^+_{max}$. Then, for sufficiently small $\beta$ which depends only on $A$,

\begin{equation*}
L(T_1T_2)= d_s(\mathcal R^n)\left(1+\frac{2|b|\beta}{(a+d)+\sqrt{(a+d)^2-4}-2|b|\beta}\right)\sqrt{\frac{1+\left(\frac{v^s_2}{v^s_1}+\sgn(b)\beta\right)^2}{1+\left(\frac{v^s_2}{v^s_1}\right)^2}}.
\end{equation*}

Using \eqref{slope of u(t)}, we obtain that $L\left(\left(F_{l,\delta}\right)^{-1}([p_1,p_2])\right)\leq \max\{L(D_1D_2), L(T_1T_2)\}$ and $$L([p_1,p_2])\leq \max\{L(D_1D_2), L(T_1T_2)\}\frac{\|\pmb v^s\|}{\|\pmb u(1-\beta)\|},$$ where $\frac{\max\{L(D_1D_2), L(T_1T_2)\}}{d_s(\mathcal R^n)}\rightarrow 1$ as $\beta\rightarrow 0$. Therefore, there exists $\beta_2 = \beta_2(A)\in\left(0,\frac{a+d-2}{|b|}\right)$ such that there exists $\nu_2=\nu_2(\beta_2)\in(0,1)$ such that for any $\beta\in(0,\beta)$ we have $L([p_1,p_2])\leq \nu_2d_s(\mathcal R^n)$.

\underline{Case 3:} Assume $[p_1,p_2]\subset F_{l,\delta}(S_2)$.

Let $\Delta D_1D_2D_3$ be a triangle such that the following hold: $D_1D_2$ has direction $\pmb u\left(\frac{\beta l +\delta(1-\beta)}{\delta}\right)$, $D_1D_3$ has direction $\pmb v^s$, $L(D_1D_3)=d_s(\mathcal R^n)$, and $D_2D_3$ has direction $\pmb v^+_{min}(-\beta)$. Then, 

\begin{equation*}
L(D_1D_2)= d_s(\mathcal R^n)\left|1-\frac{\frac{\sgn(b)\beta(l-\delta)}{\delta}}{\frac{\sqrt{(a+d)^2-4}+\sqrt{(a+d-|b|\beta)^2-4}+|b|\beta}{2b}+\frac{\sgn(b)\beta(l-\delta)}{\delta}}\right|\sqrt{\frac{1+\left(\frac{v^s_2}{v^s_1}-\frac{\sgn(b)\beta(l-\delta)}{\delta}\right)^2}{1+\left(\frac{v^s_2}{v^s_1}\right)^2}}.
\end{equation*}

Let $\Delta T_1T_2T_3$ be a triangle such that the following hold: $T_1T_2$ has direction $\pmb u\left(\frac{\beta l +\delta(1-\beta)}{\delta}\right)$, $T_1T_3$ has direction $\pmb v^s$, $L(D_1D_3)=d_s(\mathcal R^n)$, and $D_2D_3$ has direction $\pmb v^+_{max}$. Then, 

\begin{equation*}
L(T_1T_2)= d_s(\mathcal R^n)\left|1-\frac{\frac{\sgn(b)\beta(l-\delta)}{\delta}}{\frac{d}{b}-\frac{v^s_2}{v^s_1}+\frac{\sgn(b)\beta(l-\delta)}{\delta}}\right|\sqrt{\frac{1+\left(\frac{v^s_2}{v^s_1}-\frac{\sgn(b)\beta(l-\delta)}{\delta}\right)^2}{1+\left(\frac{v^s_2}{v^s_1}\right)^2}}.
\end{equation*}

Using \eqref{slope of u(t)}, we obtain that $L\left(\left(F_{l,\delta}\right)^{-1}([p_1,p_2])\right)\leq \max\{L(D_1D_2), L(T_1T_2)\}$. In particular, for any fixed $\beta$, there exists $\delta_3=\delta_3(\beta,A,l)\in(0,l)$ such that for any $\delta\in(0,\delta_3)$, we have 
\begin{equation*}
L([p_1,p_2])=L\left(\left(F_{l,\delta}\right)^{-1}([p_1,p_2])\right)\frac{\|\pmb v^s\|}{\left\|\pmb u\left(\frac{\beta l +\delta(1-\beta)}{\delta}\right)\right\|}\leq e^{-\Lambda}d_s(\mathcal R^n).
\end{equation*}

\underline{Case 4:} Assume $[p_1,p_2]\subset F_{l,\delta}(S_1) \cup F_{l,\delta}(S_3)$ and $[p_1,p_2]\cap \Int(F_{l,\delta}(S_i))\neq\emptyset$ for $i=1,3$. Combining Cases 1 and 2, we obtain that there exists $\beta_2\in\left(0,\frac{a+d-2}{|b|}\right)$ and $\nu_2 = \nu_2(\beta_2)\in(0,1)$ such that for any $\beta\in(0,\beta_2)$ we have 
$L([p_1,p_2])\leq \max\{e^{-\Lambda},\nu_2\}d_s(\mathcal R^n).$

\underline{Case 5:} Assume $[p_1,p_2]\subset F_{l,\delta}(S_2) \cup F_{l,\delta}(S_3)$ and $[p_1,p_2]\cap \Int(F_{l,\delta}(S_i))\neq\emptyset$ for $i=2,3$. Combining Cases 1 and 3, we obtain that for any $\beta\in(0,\beta_2)$ (where $\beta_2$ defined in Case 4) we have that there exists $\delta_3=\delta_3(\beta,A,l)\in(0,l)$ such that for any $\delta\in(0,\delta_3)$ we have $L([p_1,p_2])\leq e^{-\Lambda}d_s(\mathcal R^n).$

\underline{Case 6:} Assume $[p_1,p_2]\subset F_{l,\delta}(S_1) \cup F_{l,\delta}(S_2)$ and $[p_1,p_2]\cap \Int(F_{l,\delta}(S_i))\neq\emptyset$ for $i=1,2$. There is a piece of a line with direction $\pmb v^s$ that intersects two opposite boundaries of a rectangle in $\mathcal R^n$ that are contained in $W^u((0,0))$ and passes through a point of $\left(F_{l,\delta}\right)^{-1}([p_1,p_2])$ that belongs to the line $x=m+l-\delta$ (on $\mathbb T^2$). Then, using Cases 2 and 3, we obtain that $L([p_1,p_2])\leq \max\{\nu_2, e^{-\Lambda}\}d_s(\mathcal R^n)$. 

\underline{Case 7:} Assume $[p_1,p_2]\subset F_{l,\delta}(S_1)\cup F_{l,\delta}(S_2)\cup F_{l,\delta}(S_3)$ and $[p_1,p_2]\cap \Int(S_i)\neq \emptyset$ for $i=1,2,3$. Let $q_j=\left(F_{l,\delta}\right)^{-1}(p_j)$ where $j=1, 2$

\underline{Subcase 7.1:} Assume $q_j\in S_3$ where $j=1, 2$.

Recall that since $F_{l,\delta}=L_A$ on $S_3$ and $F_{l,\delta}$ is a piecewise linear map, if two segments $\gamma_1,\gamma_2\subset F_{l,\delta}(S_3)\cap[p_1,p_2]$, then the segments $\left(F_{l,\delta}\right)^{-1}(\gamma_1)$ and $\left(F_{l,\delta}\right)^{-1}(\gamma_2)$ are subsets of a line with direction $\pmb v^s$ connecting the opposite boundaries of a rectangle in $\mathcal R^n$ that are contained in $W^u((0,0))$.

Let $Q_i$ be the maximal segment in $[q_1,q_2]$ such that $Q_i\subset S_i$, where $i=1,2$. We denote $r_1=L(Q_1)$ and $r_2=L(Q_2)$. Let $Q'_i$ be the maximal segment in $\left(F_{l,\delta}\right)^{-1}([p_1,p_2])$ such that $Q'_i\subset S_i$, where $i=1,2$. We denote $r'_1=L(Q'_1)$ and $r'_2=L(Q'_2)$. By the definition of $F_{l,\delta}$ and \eqref{slope of u(t)}, we have
\begin{align*}
    &r_1= (l-\delta)\sqrt{1+\left(\frac{v_2^s}{v_1^s}\right)^2}, \qquad r_2= \delta \sqrt{1+\left(\frac{v_2^s}{v_1^s}\right)^2}, \\
    &r'_1= (l-\delta)\sqrt{1+\left(\frac{v_2^s}{v_1^s}+\sgn(b)\beta\right)^2} = r_1\frac{\sqrt{1+\left(\frac{v_2^s}{v_1^s}+\sgn(b)\beta\right)^2}}{\sqrt{1+\left(\frac{v_2^s}{v_1^s}\right)^2}}, \\
    &r'_2=\delta\sqrt{1+\left(\frac{v_2^s}{v_1^s}-C_\delta\right)^2} = r_2\frac{\sqrt{1+\left(\frac{v_2^s}{v_1^s}-C_\delta\right)^2}}{\sqrt{1+\left(\frac{v_2^s}{v_1^s}\right)^2}},
\end{align*}
where $C_\delta = \sgn(b)\frac{\beta(l-\delta)}{\delta}$. Then,
\begin{equation}\label{max_length_in_s}
L(F_{l,\delta}(Q'_1)) = r'_1\frac{\|\pmb v^s\|}{\|\pmb u(1-\beta)\|} = r_1e^{-\Lambda}\sqrt{\frac{1+\left(\frac{v_2^s}{v_1^s}+\sgn(b)\beta\right)^2}{1+\left(\frac{v_2^s}{v_1^s}-e^\Lambda\beta\sgn(b)\left(-d+b\frac{v_2^s}{v_1^s}\right)\right)^2}}
\end{equation}
and
\begin{equation}\label{max_length_in_f}
L(F_{l,\delta}(Q_2'))=r'_2\frac{\|\pmb v^s\|}{\left\|\pmb u\left(\frac{\beta l+\delta(1-\beta)}{\delta}\right)\right\|} = r_2e^{-\Lambda}\sqrt{\frac{1+\left(\frac{v_2^s}{v_1^s}-C_\delta\right)^2}{1+\left(\frac{v_2^s}{v_1^s}+C_\delta e^{-\Lambda}\left(-d+b\frac{v_2^s}{v_1^s}\right)\right)^2}}.
\end{equation}

Choose $K>0$ such that $e^{-\Lambda}(1+K)<1$ and $\left(-d+b\frac{v_2^s}{v_1^s}\right)^{-1}+K = \frac{2}{a+d+\sqrt{(a+d)^2-4}}+K<1$ which is possible since $\Lambda>0$ and $a+d>2$. Let $\nu_{7} = \max\{e^{-\Lambda}(1+K), \frac{2}{a+d+\sqrt{(a+d)^2-4}}+K\}$. Then, there exists $\beta_{7}=\beta_7(A,K)\in(0,\frac{a+d-2}{|b|})$ such that for any $\beta\in(0,\beta_7)$ there exists $\delta_7 = \delta_7(\beta,K,A,l)\in(0,l)$ such that for any $\delta\in(0,\delta_7)$ we have $L(F_{l,\delta}(Q'_i))\leq r_i\nu_7$ for $i=1,2$ because we have \eqref{max_length_in_s} and \eqref{max_length_in_f}. Combining this with Case 1 and the fact that $L([q_1,q_2])\leq d_s(\mathcal R^n)$, we obtain that for the specified choice of $\beta_7$ and $\delta_7$, we have that $L([p_1,p_2])\leq \nu_7d_s(\mathcal R^n)$ for any $\delta\in(0,\delta_7)$.

\underline{Subcases 7.2-7.3} Let $Q'_i$ be the maximal piece of $\left(F_{l,\delta}\right)^{-1}([p_1,p_2])$ in $S_i$ for $i=1,2,3$. Notice that $Q'_3$ has direction $\pmb v^s$. Let $Q$ be a piece of a line with direction $\pmb v^s$ connecting the opposite boundaries of a rectangle in $\mathcal R^n$ that are contained in $W^u((0,0))$ which passes through a point of $\left(F_{l,\delta}\right)^{-1}([p_1,p_2])$ that belongs to the line $x=m+l-\delta$.

\underline{Subcase 7.2:} Assume $q_1\in S_3$ and $q_2\in S_2$. 

Let $Q_1$ be the maximal piece of $Q$ in $\left\{(x,y)\in\mathbb T^2| 0\leq x\leq m+l-\delta\right\}$ and $Q_2=Q\setminus Q_1$. 

Let $D_1D_2D_3D_4(\pmb z)$ be a trapezoid such that $D_1D_2$ and $D_3D_4$ have direction $\pmb v^s$, $L(D_1D_2)=L(Q_1)$, $D_2D_3$ has direction $\pmb u(1-\beta)$, $L(D_2D_3)=(l-\delta)\sqrt{1+\left(\frac{v_2^s}{v_1^s}+\sgn(b)\beta\right)^2}$, and $D_1D_4(\pmb z)$ has direction $\pmb z = \begin{pmatrix}z_1\\z_2\end{pmatrix}$. In our case, we consider $\pmb z=\pmb v^+_{max}, \pmb v^+_{min}(-\beta)$. Moreover, to have a trapezoid for our choice of $\pmb z$, we should have $\frac{L(Q_1)}{\sqrt{1+\left(\frac{v^s_2}{v^s_1}\right)^2}}\geq(l-\delta)\left(1-\frac{\sgn(b)\beta}{\frac{z_2}{z_1}-\frac{v_2^s}{v_1^s}}\right)$. Then, we obtain 
\begin{equation*}
L(D_2D_3)+L(D_3D_4) = L(Q_1)+(l-\delta)\left(\sqrt{1+\left(\frac{v^s_2}{v^s_1}+\sgn(b)\beta\right)^2}-\left(1-\frac{\sgn(b)\beta}{\frac{z_2}{z_1}-\frac{v_2^s}{v_1^s}}\right)\sqrt{1+\left(\frac{v^s_2}{v^s_1}\right)^2}\right).
\end{equation*}
In particular, for sufficiently small $\beta>0$ (depending on $A$), we have
\begin{equation}\label{length_in_trapezoid_before}
L(D_2D_3)+L(D_3D_4) \leq L(Q_1)\max\left\{1,\max\limits_{\pmb z\in\{\pmb v^+_{max}, \pmb v^+_{min}(-\beta)\}}\left\{\frac{\sqrt{1+\left(\frac{v^s_2}{v^s_1}+\sgn(b)\beta\right)^2}}{\left(1-\frac{\sgn(b)\beta}{\frac{z_2}{z_1}-\frac{v_2^s}{v_1^s}}\right)\sqrt{1+\left(\frac{v^s_2}{v^s_1}\right)^2}}\right\}\right\}.
\end{equation}
Moreover,
\begin{align}\label{length_in_trapezoid_after}
L(F_{l,\delta}(Q'_3\cup Q'_1)) &= e^{-\Lambda}L(Q'_3)+\frac{\|\pmb v^s\|}{\|\pmb u(1-\beta)\|}L(Q'_1)\leq \max\left\{e^{-\Lambda}, \frac{\|\pmb v^s\|}{\|\pmb u(1-\beta)\|}\right\}L(Q'_3\cup Q'_1)\nonumber\\&\leq \max\left\{e^{-\Lambda}, \frac{\|\pmb v^s\|}{\|\pmb u(1-\beta)\|}\right\}(L(D_2D_3)+L(D_3D_4)).
\end{align}

Let $\nu'_7\in(e^{-\Lambda},1)$. Combining \eqref{length_in_trapezoid_before} and \eqref{length_in_trapezoid_after}, we obtain that there exists $\beta'_7=\beta'_7(A,\nu'_7)$ such that for any $\beta\in(0,\beta'_7)$, $L(F_{l,\delta}(Q'_3\cup Q'_1))\leq \nu'_7L(Q_1)$. 

Also, for any $\beta\in(0,\beta'_7)$ there exists $\delta_3 = \delta_3(\beta,A,l)\in(0,l)$ such that for any $\delta\in(0,\delta_3)$ we have $L(F_{l,\delta}(Q'_2))\leq e^{-\Lambda}L(Q_2)$ (see Case 3). Then, for the above choice of parameters, we have 
\begin{equation*}
L([p_1,p_2])\leq \nu'_7L(Q_1)+e^{-\Lambda}L(Q_2)\leq \nu'_7L(Q_1\cup Q_2)\leq\nu'_7d_s(\mathcal R^n).
\end{equation*}

\underline{Subcase 7.3:} Assume $q_1\in S_1$ and $q_2\in S_3$. 

Let $Q_2$ be the maximal piece of $Q$ in $\left\{(x,y)\in\mathbb T^2| m+l-\delta\leq x\leq 1\right\}$ and $Q_1=Q\setminus Q_2$. 

Let $D_1D_2D_3D_4(\pmb z)$ be a trapezoid such that $D_1D_2$ and $D_3D_4$ have directions $\pmb v^s$, $L(D_1D_2)=L(Q_2)$, $D_2D_3$ has direction $\pmb u\left(\frac{\beta l+\delta(1-\beta)}{\delta}\right)$ and $L(D_2D_3)=\delta\sqrt{1+\left(\frac{v_2^s}{v_1^s}-\frac{\sgn(b)\beta(l-\delta)}{\delta}\right)^2}$, and $D_1D_4(\pmb z)$ has direction $\pmb z = \begin{pmatrix}z_1\\z_2\end{pmatrix}$. In our case, we consider $\pmb z=\pmb v^+_{max}, \pmb v^+_{min}(-\beta)$. Moreover, to have a trapezoid for our choice of $\pmb z$, we should have $\frac{L(Q_2)}{\sqrt{1+\left(\frac{v^s_2}{v^s_1}\right)^2}}\geq\frac{\sgn(b)\beta(l-\delta)}{\frac{z_2}{z_1}-\frac{v_2^s}{v_1^s}}+\delta$. Moreover, we have 
\begin{equation*}
\sgn(b)\left(\frac{2d}{2b}-\frac{v_2^s}{v_1^s}\right) = \frac{(a+d)+\sqrt{(a+d)^2-4}}{2|b|}>0 \,\text{ and }
\end{equation*}
\begin{equation*}
\sgn(b)\left(\frac{2d-(a+d-|b|\beta)+\sqrt{(a+d-|b|\beta)^2-4}}{2b}-\frac{v_2^s}{v_1^s}\right) = \frac{|b|\beta+\sqrt{(a+d-|b|\beta)^2-4}+\sqrt{(a+d)^2-4}}{2|b|}>0.
\end{equation*}
Thus, we obtain 
\begin{equation*}
L(D_3D_4)= L(Q_2) - \delta\sqrt{1+\left(\frac{v_2^s}{v_1^s}\right)^2}\left(1+\frac{\sgn(b)\beta(l-\delta)}{\delta\left(\frac{z_2}{z_1}-\frac{v_2^s}{v_1^s}\right)}\right)\leq L(Q_2).
\end{equation*}
Then, for the choice of parameters as in Case 7.1, we have
\begin{align}\label{length_in_trapezoid_after_f}
L(F_{l,\delta}(Q'_2))+L(F_{l,\delta}(Q'_3)) &\leq \frac{\|\pmb v^s\|}{\left\|\pmb u\left(\frac{\beta l +\delta(1-\beta)}{\delta}\right)\right\|}L(Q'_2)+e^{-\Lambda}L(Q_2) \leq\nu_7L(Q_2).
\end{align}

Let $\nu''_7=\max\{\nu_7, \nu_2\}$ and $\beta''_7 = \min\{\beta_2,\beta_7\}$. Then, for any $\beta\in(0,\beta''_7)$ there exists $\delta_7=\delta_7(\beta,K,l,A)\in(0,l)$ such that for any $\delta\in(0,\delta_7)$ we have $L([p_1,p_2])\leq \nu''_7d_s(\mathcal R^n)$.

Combining Cases 1-7, we obtain the statement of the lemma.
\end{proof}

Similarly to Lemma~\ref{stable_size}, we can obtain the following lemma.

\begin{lemma}\label{unstable_size}
Consider the setting above. Then, there exists $\beta_u\in(0,\frac{a+d-2}{|b|})$ such that for any $\beta\in(0,\beta_u)$ there exist $\delta_u=\delta_u(\beta)\in(0,l)$ and $\nu_u = \nu_u (\delta_u) \in(0,1)$ with the following properties. Let $\delta\in(0,\delta_u)$ and $\mathcal R$ be the partition for $F_{l,\delta}$. Then, for any $n\in\mathbb N$, and any $R\in\mathcal R^n$ we have
$$d_u(\left(F_{l,\delta}\right)^{-1}(R))<\nu_u d_u(\mathcal R^n) .$$
\end{lemma}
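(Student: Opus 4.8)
The plan is to mirror the proof of Lemma~\ref{stable_size}, replacing the stable direction and the forward dynamics by the unstable direction and the inverse dynamics. The key observation is that $F_{l,\delta}^{-1}$ is again a piecewise linear area-preserving map: on each of the three strips $F_{l,\delta}(S_1)$, $F_{l,\delta}(S_2)$, $F_{l,\delta}(S_3)$ the differential $DF_{l,\delta}$ is one of the constant matrices $A(1-\beta)$, $A\left(\frac{\beta l+\delta(1-\beta)}{\delta}\right)$, $A$ from Proposition~\ref{eigenvector_eigenvalue_proposition}, so $DF_{l,\delta}^{-1}$ is the corresponding inverse, and the boundaries of elements of $\mathcal R^n$ parallel to $W^s((0,0))$ are piecewise linear curves whose linear pieces lie in the stable cone $\mathcal C^-$ (the one spanned by $\pmb v^-_{min}(-\beta)$ and $\pmb v^-_{max}$).

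First I would fix $R\in\mathcal R^n$, take the two opposite boundaries of $\left(F_{l,\delta}\right)^{-1}(R)$ that lie in $W^s((0,0))$, and consider the segment $[q_1,q_2]$ in direction $\pmb v^u$ joining them; its $F_{l,\delta}$-image $[p_1,p_2]$ (more precisely the corresponding segment in $R$) has length $\le d_u(\mathcal R^n)$, and I want to bound $L([q_1,q_2]) = L\left(\left(F_{l,\delta}\right)^{-1}([p_1,p_2])\right)$ by $\nu_u\,d_u(\mathcal R^n)$. As in Lemma~\ref{stable_size} I would compute, for each $t\in\left\{1-\beta,\ \frac{\beta l+\delta(1-\beta)}{\delta},\ 1\right\}$, the preimage direction $\pmb u^u(t) = A(t)^{-1}\pmb v^u$ of a line that $A(t)$ sends to direction $\pmb v^u$, together with the contraction factor $\frac{\|\pmb v^u\|}{\|\pmb u^u(t)\|}$; on the ``slow'' strip $S_1$ this factor is $e^{-\Lambda}+O(\beta)$ (so $<1$ for small $\beta$), on $S_3$ it is exactly $e^{-\Lambda}<1$, and on the ``fast'' strip $S_2$, where $DF_{l,\delta}$ has a large norm, the factor is small provided $\delta$ is small enough. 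Then I would partition $[q_1,q_2]$ (equivalently partition $[p_1,p_2]$ by the images of the strips) into at most finitely many pieces, each contained in one strip, and run exactly the same seven-case analysis (Cases 1--7, with Subcases 7.1--7.3) as in Lemma~\ref{stable_size}: in each case one compares the length of the preimage segment, broken over the strips, to $d_u(\mathcal R^n)$ using triangles/trapezoids whose sides have directions $\pmb v^u$, $\pmb u^u(t)$, $\pmb v^-_{min}(-\beta)$, $\pmb v^-_{max}$. This produces $\beta_u\in\left(0,\frac{a+d-2}{|b|}\right)$ and, for each $\beta\in(0,\beta_u)$, a threshold $\delta_u(\beta)$ and a contraction constant $\nu_u(\delta_u)\in(0,1)$ so that $L([q_1,q_2])\le\nu_u\,d_u(\mathcal R^n)$, which is the assertion $d_u\left(\left(F_{l,\delta}\right)^{-1}(R)\right)<\nu_u\,d_u(\mathcal R^n)$.

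The main technical obstacle is the same as in Lemma~\ref{stable_size}: on the fast strip $S_2$ the map $F_{l,\delta}$ has derivative of order $l/\delta$, so a priori its \emph{inverse} could stretch the $\pmb v^u$-segment by a large factor; one must check that the relevant preimage direction $\pmb u^u\left(\frac{\beta l+\delta(1-\beta)}{\delta}\right)$ is close to $\pmb v^u$ while its norm grows, so that the actual contraction factor $\frac{\|\pmb v^u\|}{\|\pmb u^u(\,\cdot\,)\|}$ tends to $0$ as $\delta\to 0$, and that the geometry of the trapezoids in the mixed cases (7.2, 7.3) does not lose this gain. Once the $\delta\to 0$ behaviour on $S_2$ and the $\beta\to 0$ behaviour on $S_1$ are pinned down exactly as in Lemma~\ref{stable_size}, the combination of the cases is routine, so I would simply indicate the modifications rather than repeat the full case analysis.
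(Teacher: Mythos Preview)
Your plan is exactly what the paper does: it simply states that Lemma~\ref{unstable_size} follows ``similarly to Lemma~\ref{stable_size}'', and your outline of mirroring the seven-case analysis with $F\leftrightarrow F^{-1}$ and $\pmb v^s\leftrightarrow \pmb v^u$, using the stable cone $\mathcal C^-$ for the $W^s$-boundaries, is the intended argument.

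One point to straighten out when you write it up: your formula $\pmb u^u(t)=A(t)^{-1}\pmb v^u$ is inconsistent with the factors you then claim. With that formula, on $S_3$ (i.e.\ $t=1$) you would get $\frac{\|\pmb v^u\|}{\|A^{-1}\pmb v^u\|}=e^{\Lambda}>1$, not $e^{-\Lambda}$. The correct object is the $F$-\emph{image} direction $A(t)\pmb v^u$: you take the $\pmb v^u$-segment $[q_1,q_2]$ in $F_{l,\delta}^{-1}(R)$, partition it by the strips $S_i$ (where $F_{l,\delta}$ is linear), push each piece forward by $A(t_i)$ into $R$, and bound its length there using triangles/trapezoids with sides in directions $\pmb v^u$, $A(t_i)\pmb v^u$, $\pmb v^-_{\min}(-\beta)$, $\pmb v^-_{\max}$. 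The relevant contraction factor is then $\frac{\|\pmb v^u\|}{\|A(t_i)\pmb v^u\|}$, which does equal $e^{-\Lambda}$ on $S_3$, is $e^{-\Lambda}+O(\beta)$ on $S_1$, and tends to $0$ on $S_2$ as $\delta\to 0$ --- exactly your stated behaviour. Relatedly, $F_{l,\delta}([q_1,q_2])$ is a piecewise linear curve, not a single segment $[p_1,p_2]$, so the phrase ``its $F_{l,\delta}$-image $[p_1,p_2]$\ldots has length $\le d_u(\mathcal R^n)$'' needs to be replaced by the triangle/trapezoid comparisons, just as in Lemma~\ref{stable_size}. Once this is set up correctly the rest of your sketch goes through.
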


\begin{proof}[Proof of Lemma~\ref{element inside}]
First, let $\beta_0=\frac{1}{2}\min\{\beta_s,\beta_u\}$, where $\beta_s$ and $\beta_u$ are as in Lemmas~\ref{stable_size} and \ref{unstable_size}. Then, $\delta' = \frac{1}{2}\min\{\delta_s(\beta_0),\delta_u(\beta_0)\}$.

Let $\delta\in(0,\delta')$. Denote $\tilde A=A\left(\frac{\beta_0l+\delta(1-\beta_0)}{\delta}\right)$. Then, for any vector $\pmb v = \alpha_1 \pmb v^-_{min}(-\beta_0)+ \alpha_2 \pmb v^-_{max}$, where $\alpha_1\alpha_2\geq 0$ and $\alpha^2_1+\alpha_2^2\neq 0$, we have
\begin{align*}
\pmb {\hat v} = \begin{pmatrix}\hat v_1\\\hat v_2 \end{pmatrix}&=\tilde A^{-1}\pmb v = \begin{pmatrix}d & -b\\ -c-\sgn(b)d\frac{\beta_0(l-\delta)}{\delta} & a+|b|\frac{\beta_0(l-\delta)}{\delta} \end{pmatrix}\left(\alpha_1\begin{pmatrix}2b\\\phi^-(a+d-|b|\beta_0) \end{pmatrix}+\alpha_2\begin{pmatrix}0\\-1 \end{pmatrix}\right)\\
&=\frac{\beta_0(l-\delta)}{\delta}|b|\left(\alpha_1\left(2d-\phi^{-}(a+d-|b|\beta_0)\right)+\alpha_2\right)\begin{pmatrix} 0\\ -1\end{pmatrix}+A^{-1}\begin{pmatrix}2b\alpha_1\\\alpha_1\phi^{-}(a+d-|b|\beta_0)-\alpha_2\end{pmatrix}.
\end{align*}

Thus, 
\begin{equation*}
\min\left\{0, \frac{2}{b\left(2d-\phi^{-}(a+d-|b|\beta_0)\right)}\right\}\leq\frac{\hat v_2}{\hat v_1}+\frac{\sgn(b)\beta_0(l-\delta)}{\delta}+\frac{a}{b}\leq \max\left\{0,\frac{2}{b\left(2d-\phi^{-}(a+d-|b|\beta_0)\right)}\right\}.
\end{equation*}

Therefore, there exists $\delta_0\in(0,\delta')$ such that the following holds. Let $\delta\in (0,\delta_0)$ and $I$ be a segment of $W^s((0,0))$ contained in $S_2(\delta)$ with endpoints $(m+l-\delta,y_1)$ and $(m+l,y_2)$ on its boundary. Then, $|y_2-y_1|\geq\frac{1}{2}\beta_0 l$.

Let $\mathcal R$ be the Markov partition for $F_{l,\delta}$. Denote by $\partial_j(\mathcal R)$ the boundary of $\mathcal R$ that is contained in $W^j((0,0))$, where $j=s,u$. By Lemmas ~\ref{Anosov diffeomorphism} and ~\ref{unstable_size}, there exists $n_1=n_1(\delta_0)\in\mathbb N$ such that there are two distinct segments $I, J\subset\partial_s(\mathcal R^n_1)\cap S_2(\delta)$ with endpoints $(m+l-\delta, y^I_1)$, $(m+l, y^I_2)$ and $(m+l-\delta, y^J_1)$, $(m+l, y^J_2)$, respectively, with the property that $\min\{|y^I_1-y^J_2|,|y^J_1-y^I_2|\}\geq\frac{1}{4}\beta_0 l$. Then, using Lemma~\ref{stable_size}, there exists also $n_2=n_2(\delta_0)\in\mathbb N$ such that there are two distinct segments $W, V\subset\partial_u(\mathcal R^n_2)\cap S_2(\delta)$ with endpoints $(m+l-\delta, y^W_1)$, $(m+l, y^W_2)$ and $(m+l-\delta, y^V_1)$, $(m+l, y^V_2)$, respectively, with the property that $W\cap I\neq\emptyset$, $W\cap J\neq\emptyset$, $V\cap I\neq\emptyset$, $V\cap J\neq\emptyset$, and the intersection does not contain the endpoints of $I,J, W, V$. Thus, for $n_0=\max\{n_1,n_2\}$ there exists $R\in\mathcal R^{n_0}$ such that $R\subset \{(x,y)\in\mathbb T^2| x\in(m+l-\delta,m+l)\}$.
\end{proof}

\subsubsection*{Markov partition for $F^w_{l,\delta}$ when $w>0$.}

We construct the Markov partition for $F^w_{l,\delta}$ in the same way as for $F_{l,\delta}$. We quickly recall it while introducing some notations. Let us draw segments of $W_w^u((0,0))$ and $W_w^s((0,0))$ until they cross sufficiently many times and separate $\mathbb T^2$ into two disjoint (curvilinear) rectangles $R_1, R_2$. Define $\mathcal R_w$ to be the partition into rectangles determined by $R_i\cap F^w_{l,\delta}(R_j)$, where $i,j=1,2$. For $n\in\mathbb N$ let $\mathcal R_w^n$ be the partition into rectangles generated by $\left(F^w_{l,\delta}\right)^i(R)\cap \left(F^w_{l,\delta}\right)^j(T)$, where $S, T\in\mathcal R_w$ and $i,j=-n,-n+1,\ldots, n-1,n$. Let $\mathcal R^0 = \mathcal R$. 

\begin{lemma}\label{element_inside_for_smooth}
Let $\beta_0, \delta_0$, and $n_0$ be as in Lemma~\ref{element inside}. Let 
\begin{equation}\label{fast strip in smoothed}
S^w_2(\delta) = \{(x,y)\in\mathbb T^2| x\in(m+l-\delta+w,m+l-w)\}. 
\end{equation}
Then, for any $\delta\in(0,\delta_0)$ there exists $w_0=w_0(\delta)\in(0,\frac{\delta}{4})$ such that there exists $R\in\mathcal R_{w_0}^{n_0}$ such that $R\subset S^{w_0}_2(\delta)$, where $\mathcal R_{w_0}$ is the Markov partition described above for $F^{w_0}_{l,\delta}$ with $\beta=\beta_0$. In particular, there exists $Q=Q(\beta_0,\delta_0)>0$ such that if $\nu^{w_0}_{l,\delta}$ is the measure of maximal entropy for $F^{w_0}_{l,\delta}$, then $\nu^{w_0}_{l,\delta}(S^{w_0}_2(\delta))\geq Q$. 
\end{lemma}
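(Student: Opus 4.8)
The plan is to deduce the lemma from Lemma~\ref{element inside} by perturbing in the smoothing parameter $w$, and then to bound $\nu^{w_0}_{l,\delta}\bigl(S^{w_0}_2(\delta)\bigr)$ from below by identifying the measure of maximal entropy with the Parry measure of the associated subshift of finite type. Throughout we take $\beta=\beta_0$, and $\beta_0,\delta_0,n_0$ denote the quantities furnished by Lemma~\ref{element inside}.

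Fix $\delta\in(0,\delta_0)$ and let $R^*\in\mathcal R^{n_0}$ be the rectangle provided by Lemma~\ref{element inside}, so that $R^*\subset\{x\in(m+l-\delta,m+l)\}$ (we suppress the $y$-coordinate when describing such vertical strips). Being compact and contained in this open strip, $R^*$ lies in $\{x\in(m+l-\delta+2\eps_0,\,m+l-2\eps_0)\}$ for some $\eps_0=\eps_0(\delta)>0$. Now all the data used to cut out $\mathcal R^{n_0}_w$ vary continuously with $w$ near $0$: by Lemma~\ref{twist_function}, $F^w_{l,\delta}\to F^0_{l,\delta}$ uniformly as $w\to 0$; every $F^w_{l,\delta}$ fixes $(0,0)$ and coincides with $L_A$ near it, so the local pieces $W^{u,s}_w((0,0),\kappa)$ are $w$-independent straight segments; and by \eqref{stable_unstable_mfld_image} the finite arcs of $W^{u}_w((0,0))$ and $W^{s}_w((0,0))$ used to construct $\mathcal R_w$, together with their images under $(F^w_{l,\delta})^{\pm i}$ for $|i|\le n_0$, are obtained from those segments by a bounded number of iterations. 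Since the transverse crossings defining $\mathcal R$ persist under small perturbations, for $w$ small $\mathcal R_w$ has the same combinatorial type as $\mathcal R$, and each atom of $\mathcal R^{n_0}_w$ converges, in the Hausdorff metric as $w\to 0$, to the corresponding atom of $\mathcal R^{n_0}$. Let $R^*_w\in\mathcal R^{n_0}_w$ be the atom corresponding to $R^*$, and choose $w_0=w_0(\delta)\in(0,\min\{\eps_0,\delta/4\})$ so small that the Hausdorff distance between $R^*_{w_0}$ and $R^*$ is at most $\eps_0$. Then $R^*_{w_0}\subset\{x\in(m+l-\delta+\eps_0,\,m+l-\eps_0)\}\subset S^{w_0}_2(\delta)$, the last inclusion because $w_0<\eps_0$. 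This proves the first assertion, with $R=R^*_{w_0}$.

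For the second assertion, $F^{w_0}_{l,\delta}$ is an Anosov diffeomorphism of $\mathbb T^2$ topologically conjugate to $L_A$, hence $h_{top}(F^{w_0}_{l,\delta})=\Lambda$, and the Adler--Weiss coding by $\mathcal R_{w_0}$ identifies $(F^{w_0}_{l,\delta},\nu^{w_0}_{l,\delta})$, up to a set of $\nu^{w_0}_{l,\delta}$-measure zero, with a two-sided topologically mixing subshift of finite type on a transition matrix $M$ whose Perron eigenvalue is $e^{\Lambda}$, equipped with its Parry measure. The atom $R^*_{w_0}\in\mathcal R^{n_0}_{w_0}$ corresponds to a cylinder $[a_{-n_0}\cdots a_{n_0}]$, so the explicit formula for the Parry measure gives
\[
\nu^{w_0}_{l,\delta}(R^*_{w_0})=\frac{u_{a_{-n_0}}\,v_{a_{n_0}}}{e^{2n_0\Lambda}}\ \ge\ \frac{\bigl(\min_i u_i\bigr)\bigl(\min_j v_j\bigr)}{e^{2n_0\Lambda}}\ =:\ Q\ >\ 0,
\]
where $u,v$ are the left and right Perron eigenvectors of $M$, normalized so that $\sum_i u_iv_i=1$. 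Since $R^*_{w_0}\subset S^{w_0}_2(\delta)$, we conclude $\nu^{w_0}_{l,\delta}\bigl(S^{w_0}_2(\delta)\bigr)\ge Q$.

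It remains to check that $Q$ can be chosen independently of $\delta\in(0,\delta_0)$. The exponent $n_0$ is already uniform by Lemma~\ref{element inside}. As for the eigenvectors, $\mathcal R$ is a one-step refinement of a partition of $\mathbb T^2$ into at most two curvilinear rectangles, so the alphabet of the subshift has at most four symbols for every $\delta$; hence $M$ ranges over a fixed finite collection of $0$--$1$ matrices as $\delta$ varies, and one may take $Q$ to be the minimum of the quantity displayed above over the finitely many such matrices that are primitive with Perron eigenvalue $e^{\Lambda}$ — a positive constant depending only on $\beta_0$ and $\delta_0$. The main obstacle, then, is not this estimate but the perturbation step of the preceding paragraph: since the maps with $w=0$ are only piecewise smooth, some care is required to justify the Hausdorff-continuity in $w$ of the relevant finite arcs of $W^{u,s}_w((0,0))$ and the persistence of the Markov combinatorics of $\mathcal R_w$ and of $\mathcal R^{n_0}_w$ as $w\to 0$. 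That is the step I would handle most carefully.
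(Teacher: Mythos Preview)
Your argument follows essentially the same route as the paper's: a perturbation in $w$ using \eqref{stable_unstable_mfld_image} and the uniform convergence $F^w_{l,\delta}\to F^0_{l,\delta}$ to push the atom $R^*\in\mathcal R^{n_0}$ into $S^{w_0}_2(\delta)$, followed by the identification of $\nu^{w_0}_{l,\delta}$ with the Parry measure of the coded subshift. One small correction: the Adler--Weiss refinement $\mathcal R$ need not have at most four atoms (for instance, the cat map gives five), since $R_i\cap F_{l,\delta}(R_j)$ may be disconnected; the reason $Q$ is uniform in $\delta$ is rather that every $F^{w}_{l,\delta}$ is topologically conjugate to $L_A$, so the transition matrix $M$ (and hence the Perron data $u,v,e^{\Lambda}$) is the \emph{same} for all $\delta$, not merely drawn from a finite set.
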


\begin{proof}
Recall that in a neighborhood of $(0,0)$ we have $F_{l,\delta}=F^w_{l,\delta}=L_A$ for $w\in(0,\frac{\delta}{4})$. In particular, the corresponding stable and unstable manifolds coincide in a neighborhood of $(0,0)$. Let $\mathcal R$ be the constructed Markov partition for $F_{l,\delta}$. From \eqref{stable_unstable_mfld_image}, there exists $N=N(n_0,\kappa)\in\mathbb N$ such that for any point $p\in\partial_j(\mathcal R^{n_0})$ for $j=s,u$ there exist $q\in W^j_{w}((0,0),\kappa)=W^j_{0}((0,0),\kappa)$ and $n\in\mathbb Z$ such that $|n|\leq N$ and $p=\left(F_{l,\delta}\right)^n(q)$. Moreover, $\left(F^w_{l,\delta}\right)^n(q)\in W^j_w((0,0))$. By Lemma~\ref{twist_function}, we obtain that $\dist\left(p,\left(F^w_{l,\delta}\right)^n(q)\right)$ can be made arbitrarily small by choosing a sufficiently small $w$, and this choice can be made in a uniform way on compact sets for $|n|\leq N$. Therefore, using Lemma~\ref{element inside}, we obtain that there exists a sufficiently small $w_0>0$ such that there exists $R\in\mathcal R_{w_0}^{n_0}$ with $R\subset S^{w_0}_2(\delta)$.

The statement about the measure of maximal entropy follows from the fact that $F^{w_0}_{l,\delta}$  is topologically semiconjugate to the topological Markov chain defined using the constructed Markov partition. Moreover, the semiconjugacy is one-to-one on all periodic points except for the fixed points. In particular, the measure of maximal entropy for $F^{w_0}_{l,\delta}$ is defined by the measure of maximal entropy for the topological Markov shift (Parry measure, see, for example, \cite[Proposition 4.4.2]{KatokHasselblatt}) using the topological semiconjugacy.
\end{proof}

\subsubsection*{Lower bound on $\lambda_{mme}(F^w_{l,\delta})$.}

We now work towards obtaining a lower bound on $\lambda_{mme}(F^w_{l,\delta})$.

We are in the setting of Lemma~\ref{element_inside_for_smooth}. We will use the same notation as in Proposition~\ref{eigenvector_eigenvalue_proposition} and Lemma~\ref{Anosov diffeomorphism}.

Let $\pmb v_\delta^u = \pmb e^+\left(\frac{\beta_0 l +\delta(1-\beta_0)}{\delta}\right)$ and $\pmb v_\delta^s = \frac{\pmb e^-\left(\frac{\beta_0 l +\delta(1-\beta_0)}{\delta}\right)}{\left\|\pmb e^-\left(\frac{\beta_0 l +\delta(1-\beta_0)}{\delta}\right)\right\|}$. Denote by $\hat\delta = a+d+|b|\left(\frac{\beta_0 l +\delta(1-\beta_0)}{\delta}-1\right)$ and $\hat\beta_0 = a+d-|b|\beta_0$.
Then,
\begin{align*}
&\pmb v^+_{max} = c_{max}^{u,\delta}\pmb v_\delta^u+c_{max}^{s,\delta}\pmb v_{\delta}^s, \text{ where }
\\ &\begin{pmatrix}c_{max}^{u,\delta}\\c_{max}^{s,\delta}\end{pmatrix} = \frac{1}{4\sqrt{\hat\delta^2-4}}\begin{pmatrix}\hat\delta+\sqrt{\hat\delta^2-4}\\\left(\sqrt{\hat\delta^2-4}-\hat\delta\right)\left\|\pmb e^-\left(\frac{\beta_0 l +\delta(1-\beta_0)}{\delta}\right)\right\|\end{pmatrix}\rightarrow \begin{pmatrix}\frac{1}{2}\\0\end{pmatrix} \text{ as } \hat\delta\rightarrow\infty, \text{ and }\\
&\pmb v^+_{min}(-\beta_0) = c_{min}^{u,\delta}\pmb v_\delta^u+c_{min}^{s,\delta}\pmb v_{\delta}^s, \text{ where } \\&\begin{pmatrix}c_{min}^{u,\delta}\\c_{min}^{s,\delta}\end{pmatrix} = \frac{1}{2\sqrt{\hat\delta^2-4}}\begin{pmatrix}\hat\delta-\hat\beta_0+\sqrt{\hat\delta^2-4}+\sqrt{\hat\beta_0^2-4}\\\left(\hat\beta_0-\hat\delta+\sqrt{\hat\delta^2-4}-\sqrt{\hat\beta_0^2-4}\right)\left\|\pmb e^-\left(\frac{\beta_0 l +\delta(1-\beta_0)}{\delta}\right)\right\| \end{pmatrix}\rightarrow\begin{pmatrix}1\\\hat\beta_0-\sqrt{\hat\beta_0^2-4}\end{pmatrix} \text{ as } \hat\delta\rightarrow\infty.
\end{align*}

As a result, if $\delta$ is sufficiently small, then for any $\pmb v = \alpha_1\pmb v^+_{max}+\alpha_2\pmb v^+_{min}(-\beta_0)$, where $\alpha_1\alpha_2\geq 0$ and $\alpha_1^2+\alpha_2^2\neq 0$, we have for any $n\in\mathbb N$

\begin{equation}\label{large expansion estimate}
\left\|\left(A\left(\frac{\beta_0l+\delta(1-\beta_0)}{\delta}\right)\right)^n\pmb v\right\|\geq \left(\mu^+\left(\frac{\beta_0l+\delta(1-\beta_0)}{\delta}\right)\right)^nC\|\pmb v\|,
\end{equation}
where $C = |b|\min\left\{\frac{1}{2\|\pmb v^+_{max}\|}, \frac{1}{\|\pmb v^+_{min}(-\beta_0)\|}\right\}\in(0,1)$. 

Let $S^{w_0}_2(\delta)$ be as in Lemma~\ref{element_inside_for_smooth}. Then, $\nu^{w_0}_{l,\delta}(S^{w_0}_2(\delta))\geq Q$. We obtain the lower bound on $\lambda_{mme}(F^w_{l,\delta})$ in a similar way as in Section~\ref{section: estimate_abs} by replacing $\mathcal S$ by $\mathbb T^2\setminus S^{w_0}_2(\delta)$ and $\mathrm Leb$ by $\nu^{w_0}_{l,\delta}$ and using Birkhoff's Ergodic Theorem for $\nu^{w_0}_{l,\delta}$. More precisely, 

Consider $p\in\mathbb T^2$ and a natural number $n$. We write
\[
n = \sum\limits_{j=1}^sn_j,
\]
where $n_1\in\{0\}\cup\mathbb N$ and the numbers $n_j\in\mathbb N$ for $j=1,\ldots,s$ are chosen in the following way:
\begin{enumerate}
    \item The number $n_1$ is the first moment when $\left(F^w_{l,\delta}\right)^{n_1}(p)\in \mathbb T^2\setminus S_2^{w_0}(\delta)$;
    \item The number $n_2$ is such that the number $n_1+n_2$ is the first moment when $\left(F^w_{l,\delta}\right)^{n_1+n_2}(p)\in S_2^{w_0}(\delta)$;
    \item The rest of the numbers are defined in the same way. For any $k\in\mathbb N$, the number $n_{2k+1}$ is such that the number $\sum\limits_{j=1}^{2k+1}n_j$ is the first moment when $\left(F^w_{l,\delta}\right)^{\sum\limits_{j=1}^{2k+1}n_j}(p)\in \mathbb T^2\setminus S_2^{w_0}(\delta)$, and the number $n_{2k+2}$ is such that the number $\sum\limits_{j=1}^{2k+2}n_j$ is the first moment when $\left(F^w_{l,\delta}\right)^{\sum\limits_{j=1}^{2k+2}n_j}(p)\in S_2^{w_0}(\delta)$.
\end{enumerate}

Let $\pmb v\in\mathcal C_p^+$ and $\|\pmb v\|=1$. Then, we have
\begin{equation*}
    \log\|D_{p}\left(F^w_{l,\delta}\right)^n\pmb v\| =\sum\limits_{j=1}^s\log\|D_{\left(F^w_{l,\delta}\right)^{n_1+n_2+\ldots+n_{j-1}}(p)}\left(F^w_{l,\delta}\right)^{n_j}\pmb v_j\|,
\end{equation*}
where $\pmb v_1=\pmb v$, $\pmb v_2 = \frac{D_{p}\left(F^w_{l,\delta}\right)^{n_1}\pmb v_1}{\|D_{p}\left(F^w_{l,\delta}\right)^{n_1}\pmb v_1\|}$, and $\pmb v_j = \frac{D_{\left(F^w_{l,\delta}\right)^{n_1+n_2+\ldots+n_{j-2}}(p)}\left(F^w_{l,\delta}\right)^{n_{j-1}}\pmb v_{j-1}}{\|D_{\left(F^w_{l,\delta}\right)^{n_1+n_2+\ldots+n_{j-2}}(p)}\left(F^w_{l,\delta}\right)^{n_{j-1}}\pmb v_{j-1}\|}$ for $j=3, \ldots, s.$ In particular, $\|\pmb v_j\|=1$ for $j=1,\ldots, s$. 

Recall that $DF^w_{l,\delta} = A\left(\frac{\beta_0l+\delta(1-\beta_0)}{\delta}\right)$ on $S_2^{w_0}(\delta)$. Thus, using \eqref{general expansion} and \eqref{large expansion estimate}, 
 we obtain for $k\in\mathbb N$
\begin{align*}
\|D_{\left(F^w_{l,\delta}\right)^{n_1+n_2+\ldots+n_{j-1}}(p)}\left(F^w_{l,\delta}\right)^{n_j}\pmb v_j\|&\geq \left(\mu^+\left(\frac{\beta_0l+\delta(1-\beta_0)}{\delta}\right)\right)^nC \qquad &\text{if}\qquad &j=2k-1,\\
\|D_{\left(F^w_{l,\delta}\right)^{n_1+n_2+\ldots+n_{j-1}}(p)}\left(F^w_{l,\delta}\right)^{n_j}\pmb v_j\|&\geq \mu^{n_j} \qquad &\text{if}\qquad &j=2k.
\end{align*}

As a result, using the fact that $\mu>1$, we have
\begin{align*}
    \log\|D_{p}\left(F^w_{l,\delta}\right)^n\pmb v\| &\geq \left[\frac{s}{2}\right]\log C+\mu^+\left(\frac{\beta_0l+\delta(1-\beta_0)}{\delta}\right)\sum\limits_{k=1}^{[\frac{s}{2}]}n_{2k-1}+(\log\mu)\sum\limits_{k=1}^{[\frac{s}{2}]}n_{2k}\\
		&\geq \left[\frac{s}{2}\right]\log C+\mu^+\left(\frac{\beta_0l+\delta(1-\beta_0)}{\delta}\right)\sum\limits_{k=1}^{[\frac{s}{2}]}n_{2k-1}.
\end{align*}

Since $F^w_{l,\delta}$ is a smooth Anosov diffeomorphism, by Birkhoff's Ergodic Theorem we obtain, that 
\begin{equation*}
\frac{1}{n}\sum\limits_{k=1}^{[\frac{s}{2}]}n_{2k-1}\rightarrow \nu^{w_0}_{l,\delta}(S_2^{w_0}(\delta)) \,\,\text{ and } \,\,\frac{1}{n}\sum\limits_{k=1}^{[\frac{s}{2}]}n_{2k}\rightarrow \left(1-\nu^{w_0}_{l,\delta}(S_2^{w_0}(\delta))\right)\,\, \text{ as }\,\, n\rightarrow\infty. 
\end{equation*}
Moreover, each visit to $\mathbb T^2\setminus S_2^{w_0}(\delta)$ is at least one iterate, so $\limsup\limits_{n\rightarrow\infty}\left(\frac{1}{n}[\frac{s}{2}]\right)\leq \left(1-\nu^{w_0}_{l,\delta}(S_2^{w_0}(\delta))\right)\leq (1-Q)$.

Therefore, we obtain
\begin{equation}\label{mme_lower_Fldelta}
\lambda_{mme}\left(F^{w_0}_{l,\delta}\right)\geq Q\log\mu^+\left(\frac{\beta_0 l+\delta(1-\beta_0)}{\delta}\right)+(1-Q)\log C \rightarrow +\infty \text{ as }\delta\rightarrow 0
\end{equation}

as $C\in(0,1)$ and $Q=Q(\beta_0,\delta_0)\in(0,1)$ are independent of $\delta$ while $w_0$ depends on $\delta$.

\section{Construction II}\label{section: decrease abs}

In this section, we show how to decrease the Lyapunov exponent with respect to the Lebesgue probability measure while controlling the Lyapunov exponent with respect to the measure of maximal entropy starting from the Anosov diffeomorphisms in Theorem~\ref{thm: increase max}. 
In this section, we use the construction described in \cite{HJJ} while providing estimates of the Lyapunov exponents. As before, we first state the main theorem of the section and give its proof before stating and proving the necessary lemmas.

\begin{theorem}\label{decrease abs for family}
Suppose $L_A$ and $\Lambda$ as in Theorem~\ref{increase max in intro}. For any $H$ such that $\Lambda<H$ and positive number $\gamma$, let $\{g_s\}_{s\in[0,1]}$ be a smooth family of area-preserving Anosov diffeomorphisms on $\mathbb T^2$ homotopic to $L_A$ from Theorem~\ref{increase max in intro} applied for $\gamma$ and $H$ with lower bound on $\lambda_{mme}(g_1)$ coming from Lemma~\ref{lemma: bound on mme in I} being larger than $H$. Then, there exists a constant $\tilde C$ such that for any $\sigma>0, S>\Lambda$ there exists 
a smooth family $\{f_{s,t}\}_{(s,t)\in[0,1]\times[0,1]}$ of Anosov diffeomorphisms on $\mathbb T^2$ homotopic to $L_A$ such that:
\begin{enumerate}[label=(\Alph*)]
\item $f_{s,0}=g_s$ for all $s\in[0,1]$;\label{A}
\item $f_{s,t}$ preserves a probability measure $\mu_{s,t}$ which is absolutely continuous with respect to the Lebesgue measure;\label{B}
\item $\lambda_{abs}(f_{s,1})<\gamma$ for all $s\in[0,1]$;\label{C}
\item $\lambda_{mme}(f_{0,t})<S$ for all $t\in[0,1]$;\label{D}
\item $\lambda_{mme}(f_{1,t})\geq H+\tilde C\sigma$.\label{E}
\end{enumerate}
\end{theorem}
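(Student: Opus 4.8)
The plan is to obtain $\{f_{s,t}\}$ by applying, in a fixed neighborhood of the origin, the slow-down deformation of \cite{HJJ} to each member of the Construction~I family $\{g_s\}$. By property~(A) of Theorem~\ref{thm: increase max} every $g_s$ agrees with $L_A$ on a common neighborhood $V$ of $(0,0)$, so the origin is a hyperbolic fixed point shared by the whole family and $g_s|_V$ is linear. On $V$ I would replace $g_s$ by a slowed-down version --- concretely, the time-one map of a flow following the $L_A$-orbits whose speed is damped by a bump function equal to $1$ near $\partial V$ and small near $(0,0)$, the damping being governed by the new parameter $t$ (with an auxiliary size parameter $\sigma$) --- and keep $f_{s,t}=g_s$ outside $V$. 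Since nothing changes outside $V$ and inside $V$ the differential stays in the $L_A$-invariant cones, Anosov-ness (which is detected by an invariant cone field) persists, $f_{s,t}$ stays homotopic to $L_A$, and it depends smoothly on $(s,t)$. Property~(A) holds because no damping occurs at $t=0$, and property~(B) holds because a slowed flow preserves a smooth measure with density inversely proportional to its speed, which is a probability measure absolutely continuous with respect to Lebesgue.

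For property~(C) I would invoke Lemma~\ref{lemma: decreasing abs path}: since $\lambda_{abs}(f_{s,t})=\int\log\|Df_{s,t}|_{E^u}\|\,d\mu_{s,t}$ and the invariant density places ever more mass near $(0,0)$, where the expansion tends to $1$, the exponent decreases along $t$. The only delicate point is uniformity in $s$, which follows because $\lambda_{abs}(g_s)\le\Lambda$ for all $s$ (property~(B) of Theorem~\ref{thm: increase max}), so a single damping strength at $t=1$ pushes $\lambda_{abs}(f_{s,1})$ below $\gamma$ for every $s$ at once.

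Properties~(D) and~(E) are the crux, and they are what the remaining two lemmas from the outline supply. For $s=0$ we start from the linear map $g_0=L_A$; Lemma~\ref{lemma: upper bound on lambda mme start from linear} bounds $\lambda_{mme}(f_{0,t})$ by a quantity that can be made as close to $\Lambda$ as desired by shrinking $V$, hence below the prescribed $S>\Lambda$. For $s=1$, $g_1$ was chosen (via Lemma~\ref{lemma: bound on mme in I}, as in the hypothesis) so that $\lambda_{mme}(g_1)$ exceeds $H$ by a quantitative margin coming from the high-expansion strip of Construction~I --- which sits well away from $V$ --- and Lemma~\ref{bound lambda_mme below with variables} then shows the localized slow-down moves $\lambda_{mme}$ only by a controlled amount, yielding $\lambda_{mme}(f_{1,t})\ge H+\tilde C\sigma$ for a constant $\tilde C$ depending only on $A$, $\gamma$ and $H$. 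Collecting these verifications gives Theorem~\ref{decrease abs for family}.

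The main obstacle is exactly this pair of estimates on $\lambda_{mme}$, since the measure of maximal entropy admits no soft description. The strategy I would use is the one from Construction~I: transport the Adler--Weiss Markov partition of $g_s$ (explicit for the relevant maps; see Section~\ref{section: estimate_mme} and Section~\ref{section: mme in II}) through the perturbation. Because the slow-down is supported near the single fixed point $(0,0)$ and only mildly distorts the local stable and unstable laminations there, the combinatorics of the partition --- and hence the Parry measure of the associated topological Markov shift --- are essentially unchanged; this pins down the $\nu_{f_{s,t}}$-mass of the small ``slow'' region (negligible when $s=0$) and of the high-expansion strip (bounded below when $s=1$), which is precisely what is needed to sandwich $\int\log\|Df_{s,t}|_{E^u}\|\,d\nu_{f_{s,t}}$ from the relevant side.
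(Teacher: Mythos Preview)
Your proposal is correct and follows essentially the same approach as the paper: apply the slow-down deformation of Construction~II uniformly in a small disk around the origin to the entire family $\{g_s\}$, then read off (A)--(E) from, respectively, the choice $t=0$, the explicit invariant density $\kappa_\eta$, Lemma~\ref{lemma: decreasing abs path}, Lemma~\ref{lemma: upper bound on lambda mme start from linear}, and Lemma~\ref{bound lambda_mme below with variables}. One small clarification: $\sigma$ is not a size parameter for the damping but the prescribed upper bound on $\nu_{G_{1,\eta}}(D_{r_\sigma})$ in Lemma~\ref{bound lambda_mme below with variables}; the actual radius $r_0$ is chosen small enough (depending on $\sigma$, $S$, and the $N$ from Lemma~\ref{expanding in better cone}) so that both mme lemmas apply, and $\tilde C=\log(K_1K_2^{-1})+\log C$ depends only on $A$ and the Construction~I parameter $\bar\beta$.
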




\begin{proof}
We define $f_{s,0}=g_s$ for all $s\in[0,1]$ so we automatically have \ref{A} in the theorem. Moreover, by Theorem~\ref{increase max in intro}, we have that $\lambda_{abs}(f_{s,0})>\Lambda-\gamma$ and $\lambda_{mme}(f_{1,0})>H$ due to the special form of the lower bound on $\lambda_{mme}(f_{1,0})$ (see the proof of Theorem~\ref{thm: increase max}). Let $\tilde C=\log(K_1K_2^{-1})+\log(C)$ be as in Lemma~\ref{bound lambda_mme below with variables}. Choose $r_0$ small enough such that Lemma~\ref{lemma: upper bound on lambda mme start from linear} and Lemma~\ref{bound lambda_mme below with variables} hold for desired estimates. Apply Construction II to the family $\{f_{s,0}\}$ with the chosen $r_0$ and $\eta$ changing up to $\frac{\eta_1}{2}$ where $\eta_1$ comes from Lemma~\ref{lemma: decreasing abs path}. Thus, we have \ref{B} in the theorem. Furthermore, the choice of $\eta_1$ guarantees that we have \ref{C} in the theorem. Also, by the choice of $r_0$ satisfying Lemma~\ref{lemma: upper bound on lambda mme start from linear} with appropriate choice of $\chi$, we have \ref{D} in the theorem. Finally, from the form of the lower bound \ref{lower bound in lemma on mme} in Lemma~\ref{bound lambda_mme below with variables}, we obtain \ref{E} in the theorem.
\end{proof}




\subsection{Construction II: Slow-down deformation near a fixed point}\label{section slowing}

We will now describe the slow-down deformation near a fixed point that we will use. The construction comes from \cite{HJJ} but was originally introduced by A. Katok to give an example of a Bernoulli area-preserving smooth diffeomorphism (on the boundary of the set of Anosov diffeomorphism) with non-zero Lyapunov exponents on any surface (see \cite{Katokmap}). For more explanation, see Remark~\ref{remark on slow down}.

Recall that the family of diffeomorphisms, $\{f_{s,0}\}_{s\in[0,1]}$, built by Construction I has the properties that each $f_{s,0}$ has $(0,0)$ as a fixed point and is equal to $L_A$ on $\mathbb T^2\setminus \{(x,y)\in\mathbb T^2 | m-w_0<x<m+l+w_0\}$ where $m\in(0,1)$, $l\in(0,1-m)$,  and $w_0>0$ (very small). We choose a coordinate system centered at $(0,0)$ with the basis consisting of eigenvectors $\pmb v^u=\pmb e^+(1)$ and $\pmb v^s=\pmb e^-(1)$ of $A$ (see \eqref{eigenvector}). In this coordinate system, $A = \begin{pmatrix}e^\Lambda & 0\\ 0 & e^{-\Lambda}\end{pmatrix}$. 

Let $D_r = \{(s_1, s_2)| s_1^2+s_2^2\leq r^2\}$ be a disk of radius $r$ centered at $(0,0)$. Choose $0<r_0<1$ and set $r_1=2r_0\Lambda$. Then, we have
\begin{equation*}
    D_{r_0}\subset \Int A(D_{r_1})\cap \Int A^{-1}(D_{r_1}).
\end{equation*}

The linear map $x\mapsto Ax$ is the time-one map of the local flow generated by the following system of differential equations in $D_{r_1}$:

\begin{equation}\label{original flow}
    \frac{ds_1}{dt}=s_1\Lambda, \qquad \frac{ds_2}{dt} = -s_2\Lambda.
\end{equation}

Let $\alpha\in(0,\frac{1}{3})$ and $\eps\in(0,1)$ such that $\frac{1+\alpha}{(1-\eps)^{2(1+\alpha)}}<\frac{4}{3}$. Choose a $C^{\infty}$ function $\psi_0:[0,1]\rightarrow[0,1]$ satisfying:
\begin{enumerate}
    \item $\psi_0(u)=1$ for $u\geq r_0^2$;
    \item \label{property_psi0} $\psi_0(u)=\left(\frac{u}{r_0^2}\right)^{1+\alpha}$ for $0\leq u\leq\left(\frac{r_0}{2}\right)^2$\label{psi0 near 0}
    \item  $\psi_0(u)>0$ and $\psi'_0(u)\geq 0$ for $0<u<r_0^2$. \label{monotonicity psi0}
		\item $\psi_0(u)\geq \left(\frac{u}{r_0^2}\right)^{1+\alpha}$ and $\psi_0'(u)\leq \frac{1+\alpha}{(1-\eps)^{2(1+\alpha)}r_0^2}\left(\frac{u}{r_0^2}\right)^\alpha$ for $u\in[\left(\frac{r_0}{2}\right)^2, r_0^2]$; \label{stronger bounds on psi0}
    \item  In particular, for considered $\alpha$ and $\eps$, we have $\psi'_0(u)\leq \frac{4}{3r_0^2}$ when $u\in[\left(\frac{r_0}{2}\right)^2, r_0^2]$.\label{bound derivative}
\end{enumerate}
Notice that by \eqref{property_psi0} the derivative of $\psi_0$ at $\left(\frac{r_0}{2}\right)^2$ is $\frac{1+\alpha}{2^{2\alpha}r_0^2}$. See Figure~\ref{psi_graph} for an example of $\psi_0(u)$.

\begin{figure}[ht]
			\begin{subfigure}{.5\textwidth}
			\centering
\begin{tikzpicture}
\begin{axis}[
         xmax=1.3,ymax=1.3,
          axis lines=middle,
					xtick=\empty,
					ytick=\empty,
					clip=false
					]
\draw[dashed] (1/8,1)--(1/8,0) node[below] {$\left(\frac{r_0}{2}\right)^2$};
\draw[dashed] (0.5,1)--(0.5,0) node[below] {$r_0^2$};
\draw[dashed] (1,1)--(0,1) node[left] {$1$};
\draw[dashed] (1,1)--(1,0) node[below] {$1$};

\draw (0,0)--(1.25,0) node[below] {$u$};
\draw (1,0)--(0,0) node[below left] {$0$};

\addplot[red, dashed, domain=0:((1-0.03)^2)*0.5,line width=0.3mm] {pow(2*x/(1-0.03)^2,7/6)} node[above] {$\left(\frac{u}{(1-\varepsilon)^2r_0^2}\right)^{1+\alpha}$};

\addplot[domain=0:1/8,line width=0.3mm]  {pow(2*x,7/6)};
\addplot[orange, dashed,domain=1/8:0.5, line width=0.3mm]  {pow(2*x,7/6)} node[below right] {$\left(\frac{u}{r_0^2}\right)^{1+\alpha}$};
\draw[line width=0.3mm] (0.5,1)--(1,1) node[above] {$\psi_0(u)$};
\draw[smooth, tension=2, line width=0.3mm] (0.125,0.1984)--(0.27,0.5)--(0.475,0.98)--(0.48,0.987)--(0.485,0.995)--(0.5,1)--(0.6,1);
\end{axis}
\end{tikzpicture}
\caption{An example of $\psi_0(u)$.}
\label{psi_graph}
\end{subfigure}
\begin{subfigure}{.5\textwidth}
\centering
\begin{tikzpicture}
\begin{axis}[
         xmax=1.3,ymax=1.3,
          axis lines=middle,
					xtick=\empty,
					ytick=\empty,
					clip=false
					]
\draw[dashed] (0.5,1)--(0.5,0) node[below] {$r_0^2$};
\draw[dashed] (0.49,1)--(0.49,0) node[above left] {$\eta$};
\draw[dashed] (1,1)--(0,1) node[left] {$1$};
\draw[dashed] (1,1)--(1,0) node[below] {$1$};

\draw (0,0)--(1.25,0) node[below] {$u$};
\draw (1,0)--(0,0) node[below left] {$0$};

\addplot[red, dashed, domain=0:((1-0.03)^2)*0.5,line width=0.3mm] {pow(2*x/(1-0.03)^2,7/6)} node[above] {$\left(\frac{u}{(1-\varepsilon)^2r_0^2}\right)^{1+\alpha}$};

\draw[blue, line width=0.3mm] (0.5,1)--(0.15,1) node[above] {$\psi_{2r_0^2}(u)$};
\draw[blue, line width=0.3mm] (0,1)--(0.2,1);
\draw(0.01,0.46)--(0,0.46) node[left] {$\psi_0\left(\frac{\eta}{2}\right)$};

\draw[smooth,tension=3,orange,line width=0.3mm] (0,0.46)--(0.1875,0.46)--(0.21875, 0.47)--(0.225,0.475)--(0.25, 0.5)--(0.26,0.51)--(0.3, 0.58)--(0.47,0.98)--(0.475,0.987)--(0.48,0.995)--(0.5,1)--(0.6,1);
\draw[orange] (0.01,0.46)--(0.015,0.46)  node[above right] {$\psi_{\eta}(u)$};
\addplot[domain=0:1/8,line width=0.3mm]  {pow(2*x,7/6)};
\draw[line width=0.3mm] (0.5,1)--(1,1) node[above] {$\psi_0(u)$};
\draw[smooth, tension=2, line width=0.3mm] (0.125,0.1984)--(0.27,0.5)--(0.475,0.98)--(0.48,0.987)--(0.485,0.995)--(0.5,1)--(0.6,1);

\draw[dashed] (0.25,1)--(0.25,0) node[below] {$\frac{\eta}{2}$};
\draw[dashed] (0.125,1)--(0.125,0) node[below] {$\frac{\eta}{4}$};
\end{axis}
\end{tikzpicture}
\caption{An example of $\psi_{\eta}(u)$.}
\label{psi_eta_graph}
\end{subfigure}
\caption{}
\end{figure}

Define a one-parameter family of $C^{\infty}$ functions $\psi_\eta:[0,1]\rightarrow[0,1]$, where $0\leq \eta\leq 2r_0^2$, such that:
\begin{enumerate}
    \item $\psi_\eta(u)>0$ and $\psi'_\eta(u)\geq 0$ for $0<u<r_0^2$; \label{increase psi_eta}
	  \item $\psi_\eta(u)=1$ for $u\geq r_0^2$; \label{right end psi_eta}
    \item $\psi_\eta(u) = \psi_0(\frac{\eta}{2})$ for $0\leq u\leq \frac{\eta}{4}$;
    \item $\psi_\eta(u)=\psi_0(u)$ for $u\geq \eta$;
    \item if $\eta_1\leq \eta_2$, then $\psi_{\eta_1}(u)\leq \psi_{\eta_2}(u)$ for every $u\in[0,1]$;\label{monotone}
		\item \label{derivative for all eta} $\psi'_\eta(u)\leq \frac{1+\alpha}{(1-\eps)^{2(1+\alpha)}}\left(\frac{u}{r_0^2}\right)^{\alpha}\leq\frac{4}{3r_0^2}$ for $u\in(0,1)$;
    \item $\psi_\eta(u)\rightarrow \psi_0(u)$ as $\eta\rightarrow 0$ pointwise on $[0,1]$;\label{convergence}
    \item The map $(\eta,u)\mapsto \psi_\eta(u)$ is $C^\infty$ smooth.\label{differentiable}
\end{enumerate}

Notice that $\psi_{2r_0^2}(u)\equiv 1$ for $u\in[0,1]$. See Figure~\ref{psi_eta_graph} for an example of $\psi_{\eta}(u)$. Also, we have
\begin{equation}\label{integral diverge}
    \int_0^1\frac{1}{\psi_0(u)}\,du \quad \text{diverges} \quad \text{and} \quad \int_0^1\frac{1}{\psi_\eta(u)}\,du<\infty\quad\text{for}\quad \eta>0.
\end{equation}

\begin{remark}\label{remark on slow down}
Note that in \cite{Katokmap} the function $\psi_0$ (in the notation above) is such that $\int_0^1\frac{1}{\psi_0(u)}du$ converges. In comparison to \cite{HJJ} and \cite{Katokmap}, we consider a more explicit choice of $\psi_0$ which allows for the estimation of Lyapunov exponents. Furthermore, the maps that we work with are Anosov as we are not considering the map coming from $\psi_0$ itself. This is analogous to \cite[Corollary 4.2]{Katokmap}.
\end{remark}
 
Consider the following slow-down deformation of the flow described by the system \eqref{original flow} in $D_{r_0}$:
\begin{equation}\label{slow down flow}
    \frac{ds_1}{dt}=s_1\psi_\eta(s_1^2+s_2^2)\Lambda, \qquad \frac{ds_2}{dt} = -s_2\psi_\eta(s_1^2+s_2^2)\Lambda.
\end{equation}
Let $g_\eta$ be the time-one map of this flow. Observe that $g_\eta$ is defined and of class $C^\infty$ in $D_{r_1}$, and it coincides with $L_A$ in a neighborhood of $\partial D_{r_1}$ by the choice of $\psi_\eta$, $r_0$, and $r_1$. As a result, for sufficiently small $r_0$ we obtain a $C^{\infty}$ diffeomorphism 
\begin{equation}\label{map G_w}
    G_{s,\eta}(x) = \left\{
    \begin{aligned}
    &f_{s,0}(x) \quad&\text{if}\quad &x\in \mathbb T^2\setminus D_{r_1},\\
    &g_\eta(x) \quad&\text{if}\quad&x\in D_{r_1}.
    \end{aligned}
    \right.
\end{equation}


Using \eqref{integral diverge} for $\eta>0$, we can define a positive $C^{\infty}$ function
\[
\kappa_\eta(s_1,s_2):=\left\{
    \begin{aligned}
    &\left(\psi_\eta(s_1^2+s_2^2)\right)^{-1} \quad&\text{if}\quad (s_1,s_2)\in D_{r_0},\\
    &1 \quad&\text{otherwise},
    \end{aligned}
    \right.
\]

and its average

\[
K_\eta:=\int_{\mathbb T^2}\kappa_\eta\,d\Leb.
\]

For $\eta>0$ and $s\in[0,1]$, the diffeomorphism $G_{s,\eta}$ preserves the probability measure $d\mu_\eta = K_\eta^{-1}\kappa_\eta\,d\Leb$, i.e., $\mu_\eta$ is absolutely continuous with respect to the Lebesgue measure. See the paragraph containing equation (3.2) in \cite{HJJ} for the idea of the proof.

Let $p\in\mathbb T^2$. Consider the cones 
\begin{equation}\label{bigger cones}
\mathcal K^+(p) = \{\xi_1\pmb v^u+\xi_2\pmb v^s \,|\, \xi_1,\xi_2\in\mathbb R, |\xi_2|\leq |\xi_1|\} \,\text{ and } \mathcal K^-(p)= \{\xi_1\pmb v^u+\xi_2\pmb v^s \,|\, \xi_1,\xi_2\in\mathbb R, |\xi_1|\leq |\xi_2|\}
\end{equation}
in $T_p\mathbb T^2$. 

\begin{lemma}(cf. Proposition 4.1 in \cite{Katokmap})\label{invariant_cones_katokmap}
For every $s\in[0,1]$, $\eta>0$, and $p\in\mathbb T^2$ we have
\begin{equation*}
DG_{s,\eta}\mathcal K^+(p)\subsetneq \mathcal K^+(G_{s,\eta}(p)) \, \text{ and }\, DG_{s,\eta}^{-1}\mathcal K^-(p)\subsetneq \mathcal K^-(G_{s,\eta}^{-1}(p)).
\end{equation*}

Moreover, $E_{s,\eta}^+(p) = \bigcap_{j=0}^{\infty}DG^j\mathcal K^+(G^{-j}(p))$ and $E_{s,\eta}^-(p) = \bigcap_{j=0}^{\infty}DG^{-j}\mathcal K^-(G^{j}(p))$ are one-dimensional subspaces of $T_p\mathbb T^2$.
\end{lemma}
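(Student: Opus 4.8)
The plan is to verify the two cone inclusions separately on the two regions out of which $G_{s,\eta}$ is built, and then to deduce the ``moreover'' statement by the standard cone-criterion argument. Recall that $G_{s,\eta}$ equals $f_{s,0}=F^{w_0}_{l,\delta}$ on $\mathbb T^2\setminus D_{r_1}$ and the time-one map $g_\eta$ of the flow \eqref{slow down flow} on $D_{r_1}$, and that both pieces coincide with $L_A$ near $\partial D_{r_1}$ (for $r_0$ small, since $f_{s,0}=L_A$ near $(0,0)$), so $G_{s,\eta}$ is globally well defined and it suffices to check $DG_{s,\eta}\mathcal K^+(p)\subsetneq\mathcal K^+(G_{s,\eta}(p))$ and the analogue for $\mathcal K^-$ separately for $p\in D_{r_1}$ and for $p\notin D_{r_1}$. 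Throughout I would work in the coordinates $(s_1,s_2)$ adapted to the eigenbasis $\pmb v^u=\pmb e^+(1)$, $\pmb v^s=\pmb e^-(1)$ of $A$, so that $\mathcal K^\pm$ of \eqref{bigger cones} are the cones $\{|s_2|\le|s_1|\}$ and $\{|s_1|\le|s_2|\}$. Note that one cannot simply quote Lemma~\ref{Anosov diffeomorphism}: the cones $\mathcal K^\pm$ are strictly wider than the cones $\mathcal C^\pm$ used there, so invariance has to be re-derived.

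For $p\in D_{r_1}$, differentiating \eqref{slow down flow} shows that the variational equation along a trajectory $(s_1(t),s_2(t))$ is $\dot v=\Lambda\left(\begin{smallmatrix}\psi_\eta+2s_1^2\psi_\eta' & 2s_1s_2\psi_\eta'\\ -2s_1s_2\psi_\eta' & -\psi_\eta-2s_2^2\psi_\eta'\end{smallmatrix}\right)v$, with $\psi_\eta,\psi_\eta'$ evaluated at $s_1^2+s_2^2$ (and equal to $1,0$ outside $D_{r_0}$). A direct computation then gives the identity
\begin{equation*}
\frac{d}{dt}\bigl(v_1^2-v_2^2\bigr)=2\Lambda\,\psi_\eta(s_1^2+s_2^2)\,(v_1^2+v_2^2)+4\Lambda\,\psi_\eta'(s_1^2+s_2^2)\,(s_1v_1+s_2v_2)^2,
\end{equation*}
valid along the whole trajectory (with $\psi_\eta\equiv1,\psi_\eta'\equiv0$ outside $D_{r_0}$), which is strictly positive for $v\ne0$ because $\Lambda>0$, $\psi_\eta>0$ and $\psi_\eta'\ge0$ (properties \eqref{increase psi_eta} and \eqref{derivative for all eta}). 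Hence $v_1^2-v_2^2$ is strictly increasing, so a vector with $|v_2(0)|\le|v_1(0)|$ satisfies $|v_2(1)|<|v_1(1)|$, i.e.\ $Dg_\eta\mathcal K^+(p)\subsetneq\mathcal K^+(g_\eta(p))$; running time backwards gives $Dg_\eta^{-1}\mathcal K^-(p)\subsetneq\mathcal K^-(g_\eta^{-1}(p))$.

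For $p\notin D_{r_1}$ one has $D_{(x,y)}F^{w_0}_{l,\delta}=A(D_xf^{w_0}_{l,\delta})$ in the notation of Proposition~\ref{eigenvector_eigenvalue_proposition}, and by \eqref{derivative_twist} the value $t=D_xf^{w_0}_{l,\delta}(x)$ lies in a compact subset of $\bigl[\,1-\beta_0,\ \tfrac{\beta_0l+\delta(1-\beta_0)}{\delta}\,\bigr]$, so it suffices to show $A(t)\mathcal K^+\setminus\{0\}\subset\Int\mathcal K^+$ and $A(t)^{-1}\mathcal K^-\setminus\{0\}\subset\Int\mathcal K^-$ for all such $t$. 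Writing $A(t)v=Av+2|b|(t-1)(\xi_1+\xi_2)\pmb v^+_{max}$ for $v$ with $(s_1,s_2)$-coordinates $(\xi_1,\xi_2)$ (using that the first standard coordinates of $\pmb v^u$ and $\pmb v^s$ agree, by \eqref{eigenvector}) and decomposing $\pmb v^+_{max}$ in the eigenbasis, each inclusion reduces to an elementary inequality in the slope $\xi_2/\xi_1$ and the scalar $\mu=|b|(t-1)/\sqrt{(a+d)^2-4}$. It holds for every $t\ge1$, and for $t<1$ provided $\beta_0$ is small enough (an extra smallness condition on $\beta_0$, compatible with its choice in Lemma~\ref{element inside}); for large $t$ the image $A(t)\mathcal K^+$ contracts toward the expanding eigenline of $A(t)$, which lies in $\Int\mathcal K^+$. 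I expect this uniform-in-$t$ verification to be the only genuinely delicate point, since the large twist pushes $D_xf^{w_0}_{l,\delta}$ through a wide range of values and cone invariance must be controlled over the whole range rather than merely near $t=1$.

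Combining the two cases gives the first assertion. For the ``moreover'' part, for each $p$ the sets $DG^{\,j}_{s,\eta}\mathcal K^+(G^{-j}_{s,\eta}(p))$, $j\ge0$, form a nested decreasing sequence of nonempty closed cones, so the intersection $E^+_{s,\eta}(p)$ is nonempty by compactness of the projectivized cone; it is exactly one-dimensional because $G_{s,\eta}$ is uniformly hyperbolic for $\eta>0$ --- on $\mathbb T^2\setminus D_{r_1}$ by Lemma~\ref{Anosov diffeomorphism}, and on $D_{r_1}$ because the expansion and contraction rates in \eqref{slow down flow} are bounded away from $1$ in view of $\psi_\eta(u)\ge\psi_0(\eta/2)>0$ for all $u$ --- so the projective diameter of the iterated cones tends to $0$, which forces $E^+_{s,\eta}(p)$ to be a single line; this is the standard argument behind the cone criterion \cite[Corollary~6.4.8]{KatokHasselblatt}. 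Applying the same reasoning to $G_{s,\eta}^{-1}$ and $\mathcal K^-$ gives the one-dimensionality of $E^-_{s,\eta}(p)$, and $E^\pm_{s,\eta}$ are then the unstable and stable line fields of $G_{s,\eta}$.
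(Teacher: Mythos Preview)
Your proposal is correct and follows the same two-region decomposition as the paper. The main points of comparison:

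\textbf{Inside $D_{r_1}$.} Your computation of $\frac{d}{dt}(v_1^2-v_2^2)$ is correct and gives a clean Lyapunov-function argument. The paper instead writes down the scalar ODE for the tangent $\zeta_\eta=v_2/v_1$ (equation~\eqref{tangent equation}) and checks its sign at $\zeta_\eta=\pm1$; the two computations are equivalent, but yours handles both boundary rays at once and makes the role of $\psi_\eta>0$, $\psi_\eta'\ge0$ more transparent.

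\textbf{Outside $D_{r_1}$.} Here you are actually more explicit than the paper. The paper only records that $(0,1)\propto \pmb v^u-\pmb v^s$ and that $\mathcal C^\pm\subset\mathcal K^\pm$, and then asserts the inclusion ``by the constructions\dots and Proposition~\ref{eigenvector_eigenvalue_proposition}''. Your decomposition $A(t)v=Av+2|b|(t-1)(\xi_1+\xi_2)\pmb v^+_{\max}$ is correct, and in the $(s_1,s_2)$-basis it gives the new slope $e^{-2\Lambda}\dfrac{\xi_2-\mu(\xi_1+\xi_2)}{\xi_1+\mu(\xi_1+\xi_2)}$ with $\mu=|b|(t-1)/\sqrt{(a+d)^2-4}$, from which the inclusion for $t\ge1$ is immediate and for $t=1-\beta$ requires exactly the smallness condition on $\beta_0$ that you flag. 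This extra condition is harmless (it can be folded into the choice of $\beta_0$ in Lemma~\ref{element inside}), and the paper implicitly relies on it as well. One small correction: the relevant positivity properties you need are both contained in item~\eqref{increase psi_eta} of the definition of $\psi_\eta$; item~\eqref{derivative for all eta} is only an upper bound on $\psi_\eta'$.

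\textbf{The ``moreover'' part.} Both you and the paper defer to the standard cone-criterion/Katok argument; your remark that uniform hyperbolicity for $\eta>0$ comes from $\psi_\eta\ge\psi_0(\eta/2)>0$ is the right observation to make the projective contraction quantitative.
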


\begin{proof}
The cases for $\mathcal K^+(p)$ and $\mathcal K^-(p)$ are similar. Thus, we restrict ourselves to the inclusion for $\mathcal K^+(p)$. 

Notice that the vector $\begin{pmatrix}0\\1\end{pmatrix}$ in the standard Euclidean coordinates is equal to $\frac{1}{2\sqrt{(a+d)^2-4}}(\pmb v^u-\pmb v^s)$. As a result, using \eqref{boundaries of cone in basis of eigenvectors}, we obtain that $\mathcal C^+_p\subset \mathcal K^+(p)$ and $\mathcal C^-_p\subset \mathcal K^-(p)$ (see Lemma~\ref{Anosov diffeomorphism} for notation). By the constructions of $f_{s,0}$ and $G_{s,\eta}$ and Proposition~\ref{eigenvector_eigenvalue_proposition}, the desired inclusion holds outside of the disk $D_{r_1}$.

The system of variational equations corresponding to the system~\eqref{slow down flow} implies that the following equation holds for the tangent $\zeta_\eta$:

\begin{equation}\label{tangent equation}
\frac{d\zeta_\eta}{dt} = -2\Lambda((\psi_\eta(s_1^2+s_2^2)+(s_1^2+s_2^2)\psi'_\eta(s_1^2+s_2^2))\zeta_\eta+s_1s_2\psi_{\eta}'(s_1^2+s_2^2)(\zeta_\eta^2+1)).
\end{equation}

Since $\psi_\eta(s_1^2+s_2^2)>0$ in $D_{r_1}$ for $\eta>0$, substituting $\zeta_\eta=1$ and $\zeta_\eta=-1$ in \eqref{tangent equation} gives, respectively,
\begin{align*}
\frac{d\zeta_\eta}{dt} = -2\Lambda(\psi_\eta(s_1^2+s_2^2)+(s_1+s_2)^2\psi'_\eta(s_1^2+s_2^2))<0,\\
\frac{d\zeta_\eta}{dt} = 2\Lambda(\psi_\eta(s_1^2+s_2^2)+(s_1-s_2)^2\psi'_\eta(s_1^2+s_2^2))>0.
\end{align*}.

Thus, the result about the desired inclusion follows.

The statement that $E_{s,\eta}^+(p)$ and $E_{s,\eta}^-(p)$ are one-dimensional subspaces follows from the argument in \cite[Proposition 4.1]{Katokmap}.
\end{proof}


Lemma~\ref{invariant_cones_katokmap} and the fact that $G_{s,\eta}$ preserves a smooth positive measure imply the following.

\begin{corollary}
$G_{s,\eta}$ is a $C^\infty$ Anosov diffeomorphism on $\mathbb T^2$ for any $\eta>0$.
\end{corollary}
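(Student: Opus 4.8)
The plan is to verify the invariant-cone criterion for Anosov diffeomorphisms (\cite[Corollary 6.4.8]{KatokHasselblatt}), in the same spirit as the proof of Lemma~\ref{Anosov diffeomorphism}: it suffices to produce a continuous family of transverse cones in $T_p\mathbb T^2$ that is strictly $DG_{s,\eta}^{\pm1}$-invariant and on which $DG_{s,\eta}$ (respectively $DG_{s,\eta}^{-1}$) is uniformly expanding. I would take the cones $\mathcal K^\pm$ of \eqref{bigger cones}, which are uniformly transverse since $\mathcal K^+(p)\cap\mathcal K^-(p)=\{\pmb 0\}$. By Lemma~\ref{invariant_cones_katokmap} the strict inclusions $\overline{DG_{s,\eta}(\mathcal K^+(p))}\subset\Int\mathcal K^+(G_{s,\eta}(p))$ and $\overline{DG_{s,\eta}^{-1}(\mathcal K^-(p))}\subset\Int\mathcal K^-(G_{s,\eta}^{-1}(p))$ already hold, and the corresponding invariant distributions $E^\pm_{s,\eta}$ are one-dimensional, hence continuous (as nested intersections of continuous cone fields whose widths shrink uniformly) and transverse with $E^+_{s,\eta}\oplus E^-_{s,\eta}=T\mathbb T^2$. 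Thus the only missing ingredient is uniform hyperbolicity of this splitting: constants $\mu>1>\nu>0$ (independent of $p$ and $s$) with $\|D_pG_{s,\eta}^n\pmb v\|\ge c\,\mu^n\|\pmb v\|$ for $\pmb v\in E^+_{s,\eta}(p)$ and $\|D_pG_{s,\eta}^{-n}\pmb v\|\ge c\,\nu^{-n}\|\pmb v\|$ for $\pmb v\in E^-_{s,\eta}(p)$, $n\ge 0$.

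This is exactly where the invariant measure is used. Since $\eta>0$ we have $\psi_\eta\ge\psi_0(\eta/2)>0$ everywhere, so the density $K_\eta^{-1}\kappa_\eta$ is bounded above and below; hence $\mu_\eta$ is a $G_{s,\eta}$-invariant Borel probability measure of full support, $G_{s,\eta}$ is conservative, and by Poincar\'e recurrence $\mu_\eta$-a.e.\ point is recurrent. Moreover the Jacobian of $G_{s,\eta}$ with respect to $\Leb$ equals $\kappa_\eta/(\kappa_\eta\circ G_{s,\eta})$, so $|\det D_pG_{s,\eta}^n|$ is bounded above and below uniformly in $p$ and $n$. Fix a recurrent point $p$, a subsequence $n_k\to\infty$ with $G_{s,\eta}^{n_k}(p)\to p$, and $\pmb v\in E^+_{s,\eta}(p)$. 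The uniform strict invariance of $\mathcal K^+$ forces $D_pG_{s,\eta}^{n_k}(\mathcal K^+(p))$ to converge to a line inside $\mathcal K^+(p)$, necessarily $E^+_{s,\eta}(p)$; together with the two-sided bound on $|\det D_pG_{s,\eta}^{n_k}|$ this gives $\|D_pG_{s,\eta}^{n_k}\pmb v\|\to\infty$, i.e.\ a positive upper Lyapunov exponent along $E^+_{s,\eta}$ at $p$, and symmetrically a negative one along $E^-_{s,\eta}$. Since these exponents are bounded away from $0$ along a continuous invariant line bundle and the recurrent points are dense, a standard compactness argument (passing if necessary to a high iterate and to an adapted Lyapunov metric) upgrades this to the required $\mu>1>\nu>0$. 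Then \cite[Corollary 6.4.8]{KatokHasselblatt} gives that $G_{s,\eta}$ is Anosov; it is $C^\infty$ since the two pieces in \eqref{map G_w} agree together with all derivatives near $\partial D_{r_1}$. One can also get the uniform bounds directly: outside $D_{r_1}$, $G_{s,\eta}=f_{s,0}$ is Anosov by Lemma~\ref{Anosov diffeomorphism} and a computation in the eigenbasis of $A$ shows that $A(t)$ for $t\ge 1-\beta$ expands $\mathcal K^+$ definitely (using $\Lambda>\tfrac12\log2$, which holds since $\mathrm{trace}(A)\ge3$); inside $D_{r_1}$, $G_{s,\eta}=g_\eta$ is the time-one map of \eqref{slow down flow}, and since $\psi_\eta\ge\psi_0(\eta/2)>0$ while $\psi'_\eta\ge0$, the variational equation \eqref{tangent equation} together with the strict invariance of $\mathcal K^+$ shows that the $\pmb v^u$-component of any $\pmb v\in\mathcal K^+$ grows along the flow at rate at least $\Lambda\psi_0(\eta/2)$.

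The step I expect to be most delicate is the uniform expansion inside $D_{r_1}$. There $Dg_\eta$ is not diagonal in the eigenbasis of $A$: the flow speed $\psi_\eta(s_1^2+s_2^2)$ depends on position, which produces the nonlinear term in \eqref{tangent equation} and means the expansion cannot be read off from the coordinate rates alone. It is precisely here that $\eta>0$ is indispensable — for $\eta=0$ one has $\psi_0(0)=0$, the fixed point $(0,0)$ ceases to be hyperbolic, one-step expansion on $\mathcal K^+$ fails near $(0,0)$, and the map is no longer Anosov (this is Katok's example, cf.\ Remark~\ref{remark on slow down}). For $\eta>0$ one must therefore either pass to iterates together with an adapted metric, or invoke the recurrence mechanism above, both of which use the positivity and boundedness of the invariant density in an essential way.
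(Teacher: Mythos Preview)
Your proposal identifies the right two ingredients—the strictly invariant cones of Lemma~\ref{invariant_cones_katokmap} and the smooth invariant measure $\mu_\eta$—and these are exactly what the paper's one-line proof invokes. The mechanism the paper has in mind is the standard one: strict cone invariance on a compact manifold yields a \emph{dominated} splitting $E^+_{s,\eta}\oplus E^-_{s,\eta}$ (by compactness there exist $N$ and $\theta<1$ with $\|DG_{s,\eta}^N|_{E^-}\|\le\theta\,\|DG_{s,\eta}^N|_{E^+}\|$ uniformly), and since $\psi_\eta\ge\psi_0(\eta/2)>0$ the density $\kappa_\eta$ is bounded above and below, so $|\det DG_{s,\eta}^n|$ is uniformly bounded. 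Domination plus a two-sided Jacobian bound immediately gives $\|DG_{s,\eta}^{nN}|_{E^+}\|\ge c\,\theta^{-n/2}$ and the symmetric contraction on $E^-$, hence Anosov. No recurrence is needed.

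Your main route through Poincar\'e recurrence has a genuine gap. From $\|D_pG_{s,\eta}^{n_k}\pmb v\|\to\infty$ you obtain only a nonnegative \emph{upper} exponent at $p$, not a positive one; and even a positive exponent on a dense set of recurrent points does not upgrade to uniform hyperbolicity by ``a standard compactness argument''—Lyapunov exponents are not continuous, and Katok's own boundary map (morally the $\eta=0$ case, cf.\ Remark~\ref{remark on slow down}) has a dominated splitting and a.e.\ positive exponents yet fails to be Anosov. What you are really using, hidden inside the sentence about $DG^{n_k}(\mathcal K^+)$ collapsing to a line, is domination; once you isolate that, the bounded-Jacobian argument above finishes uniformly and the recurrence detour becomes unnecessary. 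Your alternative direct computation inside $D_{r_1}$ can be made to work—using $u\psi'_\eta(u)\le\tfrac{4}{3}\psi_\eta(u)$ and $2|s_1|\big(|s_2|-|s_1|\big)\le\tfrac{2}{5}u$ one gets $\tfrac{d}{dt}\log|\xi_1|\ge\tfrac{7}{15}\Lambda\psi_0(\eta/2)$ on $\mathcal K^+$, not quite the rate $\Lambda\psi_0(\eta/2)$ you claim—but this bypasses the invariant measure entirely and still leaves you to verify expansion of the larger cone $\mathcal K^+$ (not just $\mathcal C^+$) under $A(t)$ outside $D_{r_1}$, which Lemma~\ref{Anosov diffeomorphism} does not provide directly.
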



\subsection{Estimation of $\lambda_{abs}$ in Construction II}\label{section: abs in II}

We use the notation introduced in Section~\ref{section slowing}. The estimation of $\lambda_{abs}(G_{s,\eta})$ follows from the estimation of metric entropy in \cite[Section 3]{HJJ} which we provide here for completeness. From the following lemma, we can guarantee \eqref{decrease 2} in Theorem~\ref{thm: decrease abs}.

\begin{lemma}\label{lemma: decreasing abs path}
For any $\gamma>0$ there exists $\eta_1$ such that for any $0<\eta<\eta_1$ and $s\in[0,1]$ we have $\lambda_{abs}(G_{s,\eta})=\lambda_{\mu_{\eta}}(G_{s,\eta})<\gamma$. 
\end{lemma}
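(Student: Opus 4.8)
The plan is to realize $\lambda_{abs}(G_{s,\eta})$ as a ratio whose numerator stays bounded while whose denominator diverges as $\eta\to0^+$. For $\eta>0$ the map $G_{s,\eta}$ is a $C^\infty$ Anosov diffeomorphism preserving the absolutely continuous probability measure $d\mu_\eta=K_\eta^{-1}\kappa_\eta\,d\Leb$, which is therefore its SRB measure, so $\lambda_{abs}(G_{s,\eta})=\lambda_{\mu_\eta}(G_{s,\eta})$ holds by definition; and \eqref{Lyapunov exponent through integral} (with $n=2$ and unstable index $1$) gives
\begin{equation*}
\lambda_{\mu_\eta}(G_{s,\eta})=\int_{\mathbb T^2}\log\|DG_{s,\eta}|_{E^u}\|\,d\mu_\eta=\frac{1}{K_\eta}\int_{\mathbb T^2}\log\|DG_{s,\eta}|_{E^u}\|\,\kappa_\eta\,d\Leb=:\frac{N_\eta(s)}{K_\eta}.
\end{equation*}
I would prove (i) $N_\eta(s)\le C_0$ for a constant $C_0$ independent of $\eta\in(0,2r_0^2]$ and $s\in[0,1]$, and (ii) $K_\eta\to\infty$ as $\eta\to0^+$. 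Granting these, $\lambda_{abs}(G_{s,\eta})\le C_0/K_\eta\to0$ uniformly in $s$; since $K_\eta$ is non-increasing in $\eta$ by \eqref{monotone}, choosing $\eta_1\in(0,2r_0^2]$ with $C_0/K_{\eta_1}<\gamma$ gives the claim for all $\eta\in(0,\eta_1)$ and all $s$.

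Statement (ii) is the easy half: outside $D_{r_0}$ we have $\kappa_\eta\equiv1$, so $K_\eta\ge\int_{D_{r_0}}\psi_\eta^{-1}\,d\Leb$; by \eqref{monotone} and \eqref{convergence}, $\psi_\eta\downarrow\psi_0$ pointwise on $[0,1]$ as $\eta\downarrow0$, hence $\psi_\eta^{-1}\uparrow\psi_0^{-1}$, and monotone convergence together with the divergence in \eqref{integral diverge} forces $\int_{D_{r_0}}\psi_\eta^{-1}\,d\Leb\to\int_{D_{r_0}}\psi_0^{-1}\,d\Leb=+\infty$. For (i), on $\mathbb T^2\setminus D_{r_0}$ one has $\kappa_\eta\equiv1$ while $\log\|DG_{s,\eta}|_{E^u}\|$ is bounded above by a constant $M$ uniform in $s,\eta$: on $\mathbb T^2\setminus D_{r_1}$ because $G_{s,\eta}=f_{s,0}$ runs over the fixed compact family $\{g_s\}_{s\in[0,1]}$, and on $D_{r_1}\setminus D_{r_0}$ because $g_\eta$ is the time-one map of \eqref{slow down flow}, whose linearization at a point with $s_1^2+s_2^2=u$ has entries $\Lambda(\psi_\eta(u)+2s_i^2\psi'_\eta(u))$ and $\pm2\Lambda s_1s_2\psi'_\eta(u)$, each bounded by $\Lambda(1+2u\psi'_\eta(u))$, hence by a constant on $D_{r_1}$ using \eqref{derivative for all eta}. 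Thus $\int_{\mathbb T^2\setminus D_{r_0}}\log\|DG_{s,\eta}|_{E^u}\|\kappa_\eta\,d\Leb\le M$.

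The crux is the estimate on $D_{r_0}$, where $\kappa_\eta=\psi_\eta^{-1}$ becomes unbounded. For $p\in D_{r_0}$ let $u(t)=|\Phi^t_\eta(p)|^2$ along the slowed flow $\Phi_\eta^t$; the orbit segment $\{\Phi^t_\eta(p):t\in[0,1]\}$ stays in $D_{r_1}$ (where $G_{s,\eta}$ agrees with \eqref{slow down flow}) by the choice of $r_0,r_1$, and since $|s_1(t)|\le|s_1(0)|e^{\Lambda}$, $|s_2(t)|\le|s_2(0)|$ for $t\in[0,1]$ one gets $u(t)\le e^{2\Lambda}|p|^2$. On $D_{r_0}$ the linearization of \eqref{slow down flow} has operator norm $\le\Lambda C_1\psi_\eta(u)$, using $u\psi'_\eta(u)\le C_2\psi_\eta(u)$, which follows from $\psi_\eta(u)\ge\psi_0(u)\ge(u/r_0^2)^{1+\alpha}$ together with the derivative bound \eqref{derivative for all eta} (cf.\ \eqref{stronger bounds on psi0}). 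Consequently, bounding $\|DG_{s,\eta}|_{E^u}(p)\|\le\|Dg_\eta(p)\|$ by the usual time-ordered-exponential/Gronwall estimate and using that $\psi_\eta$ is non-decreasing and $\equiv1$ beyond $r_0^2$ by \eqref{right end psi_eta},
\begin{equation*}
\log\|DG_{s,\eta}|_{E^u}(p)\|\ \le\ \Lambda C_1\int_0^1\psi_\eta(u(t))\,dt\ \le\ \Lambda C_1\,\psi_\eta\big(e^{2\Lambda}|p|^2\big).
\end{equation*}
From the explicit form of $\psi_\eta$ one has a ``bounded doubling'' inequality $\psi_\eta(e^{2\Lambda}u)\le C_3\psi_\eta(u)$ for all $u\in[0,r_0^2]$ (checking separately the flat part $u\le\eta/4$ where $\psi_\eta$ is constant, the transition part $\eta/4\le u\le\eta$, and $u\ge\eta$, each time comparing $\psi_\eta$ with powers $(u/r_0^2)^{1+\alpha}$), so $\log\|DG_{s,\eta}|_{E^u}(p)\|\,\kappa_\eta(p)\le\Lambda C_1C_3$ on $D_{r_0}$, whence $\int_{D_{r_0}}\le\Lambda C_1C_3\,\Leb(D_{r_0})$ and $C_0=\Lambda C_1C_3\,\Leb(D_{r_0})+M$ works.

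The main obstacle is exactly this last estimate on $D_{r_0}$: one must see that the slow-down makes the one-step unstable expansion proportional to $\psi_\eta(|p|^2)$, up to a uniform multiplicative constant, rather than merely bounded, so that after dividing by the weight $\psi_\eta$ nothing blows up on the deep part of $D_{r_0}$ where $\mu_\eta$ concentrates as $\eta\to0$; everything else is bookkeeping with the quantitative properties of $\psi_\eta$ and $\psi'_\eta$ recorded in Section~\ref{section slowing}.
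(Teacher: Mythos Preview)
Your argument is correct and reaches the same conclusion, but the route differs from the paper's. Both start from the integral formula $\lambda_{\mu_\eta}(G_{s,\eta})=\int_{\mathbb T^2}\log\|DG_{s,\eta}|_{E^u}\|\,d\mu_\eta$ and from the observation that $K_\eta\to\infty$ (equivalently, $\mu_\eta$ concentrates at the origin) via monotone convergence and \eqref{integral diverge}. The difference lies in how the integral near the origin is handled. The paper simply picks a small disk $D_\rho$ and a small $\eta_0$ so that $\psi_\eta(u)$, hence the slowed flow and $\|Dg_\eta-\mathrm{Id}\|$, is uniformly small on $D_\rho$ for $\eta<\eta_0$; this forces $\log\|DG_{s,\eta}|_{E^u}\|<\gamma/2$ on $D_\rho$, while on $\mathbb T^2\setminus D_\rho$ the integrand is uniformly bounded and the $\mu_\eta$-measure tends to zero. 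You instead prove the stronger, quantitative statement that the \emph{un-normalized} integral $N_\eta(s)=\int\log\|DG_{s,\eta}|_{E^u}\|\,\kappa_\eta\,d\Leb$ stays uniformly bounded, by combining the Gronwall bound $\log\|Dg_\eta(p)\|\le\Lambda C_1\,\psi_\eta(e^{2\Lambda}|p|^2)$ (which uses $u\psi'_\eta(u)\le C_2\psi_\eta(u)$, a consequence of properties \ref{stronger bounds on psi0} and \ref{derivative for all eta}) with a bounded-doubling inequality $\psi_\eta(e^{2\Lambda}u)\le C_3\psi_\eta(u)$. That pointwise product bound then gives $N_\eta\le C_0$ directly. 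Your approach requires more of the fine structure of $\psi_\eta$ but yields a uniform rate $\lambda_{abs}\le C_0/K_\eta$; the paper's argument is shorter since it avoids the doubling estimate altogether by exploiting only that $\psi_\eta$ is small near $0$ when $\eta$ is small.
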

\begin{proof}
Let $U$ be any fixed neighborhood of $(0,0)$ and $U\subset D_{r_1}$. By \eqref{integral diverge} and properties \ref{monotone} and \ref{convergence} of the family of positive functions $\{\psi_\eta\}_{\eta\in[0,r_0^2]}$ defined above, applying the monotone convergence theorem, we have
\begin{equation*}
\lim\limits_{\eta\rightarrow 0}\int_{\mathbb T^2}\kappa_\eta\, d\Leb = \int_{\mathbb T^2}\lim\limits_{\eta\rightarrow 0}\kappa_\eta\, d\Leb  = \int_{\mathbb T^2}\kappa_0\, d\Leb = \infty.
\end{equation*}
and
\begin{equation*}
\lim\limits_{\eta\rightarrow 0}\int_{\mathbb T^2\setminus U}\kappa_\eta\, d\Leb = \int_{\mathbb T^2\setminus U}\lim\limits_{\eta\rightarrow 0}\kappa_\eta\, d\Leb  = \int_{\mathbb T^2\setminus U}\kappa_0\, d\Leb < \infty.
\end{equation*}

Therefore, we have the following equalities:
\begin{equation}\label{measure of complement neighborhood}
    \lim\limits_{\eta\rightarrow 0}\mu_\eta(\mathbb T^2\setminus U) =\lim\limits_{\eta\rightarrow 0}\int_{\mathbb T^2\setminus U}K_\eta^{-1}\kappa_\eta\, d\Leb = \lim\limits_{\eta\rightarrow 0}\frac{\int_{\mathbb T^2\setminus U}\kappa_\eta\,d\Leb}{\int_{\mathbb T^2}\kappa_\eta\,d\Leb} =0
\end{equation}
and
\begin{equation*}
    \lim\limits_{\eta\rightarrow 0}\mu_\eta(U) = 1.
\end{equation*}

By the ergodicity of $\mu_\eta$ for $G_{s,\eta}$ for $\eta>0$, \eqref{Lyapunov exponent through integral} implies
\begin{equation*}
    \lambda_{\mu_\eta}(G_{s,\eta}) = \int_{\mathbb T^2}\log\left|DG_{s,\eta}|_{E^u_x(G_{s,\eta})} \right|\,d\mu_\eta,
\end{equation*}
where $E^u_x(G_{s,\eta})$ is the unstable subspace at $x$ with respect to $G_{s,\eta}$.

By property \ref{psi0 near 0} of $\psi_0$ and the property \ref{differentiable} of $\psi_\eta$, we obtain that for any $\gamma>0$ there exists $\rho>0$ and $\eta_0>0$ such that $DG_{s,\eta}$ is close to the identity map and $\log\left|DG_{s,\eta}|_{E^u_x(G_{s,\eta})}\right|<\frac{\gamma}{2}$ in $D_{\rho}$ for $s\in[0,1]$ and $0<\eta<\eta_0$. Therefore, for any $s\in[0,1]$
\begin{equation}\label{estimate on small ball}
    \int_{D_\rho}\log\left|DG_{s,\eta}|_{E^u_x(G_{s,\eta})}\right|\,d\mu_\eta< \frac{\gamma}{2}.
\end{equation}

Moreover, by property \ref{differentiable}, we have that $\log\left|DG_{s,\eta}|_{E^u_x(G_{s,\eta})}\right|$ is uniformly bounded. Therefore, there exists $\eta_1<\eta_0$ such that for any $0<\eta<\eta_1$ we have

\begin{equation}\label{estimate outside}
    \int_{\mathbb T^2\setminus D_\rho}\log\left|DG_{s,\eta}|_{E^u_x(G_{s,\eta})}\right|\,d\mu_\eta< \frac{\gamma}{2}.
\end{equation}

By \eqref{estimate on small ball} and \eqref{estimate outside}, we obtain the lemma.
\end{proof}

\subsection{Estimation of $\lambda_{mme}$ in Construction II}\label{section: mme in II}

We use the notation introduced in Section~\ref{section slowing}. 


\subsubsection*{Upper bound on $\lambda_{mme}$}

Here, we prove Lemma~\ref{lemma: upper bound on lambda mme start from linear} which provides an upper bound on $\lambda_{mme}$ for a family of maps in Construction II.

The main ingredient to obtain an upper bound on $\lambda_{mme}$ is to estimate the consecutive time each trajectory spends in the annulus $D_{2\Lambda r_0}\setminus D_{\frac{r_0}{2\Lambda}}$ independent of $r_0$ and $\eta$. 

Recall that $\Lambda = h_{top}(L_A)$ and $\alpha$ is a constant that appears in the second condition on $\psi_0$ (see Section~\ref{section slowing}). 
 
\begin{lemma}(cf. Lemma 5.6 in \cite{PSZ})\label{time in annulus}
There exists $T_0>0$ depending only on $\Lambda$ and $\alpha$ such that for any solution $s_\eta(t) = (s_1(t), s_2(t))_\eta$ of \eqref{slow down flow} with $s_\eta(0)\in D_{r_1}$, we have
\begin{equation*}
    \max\{t| s_\eta(t)\in D_{r_1}\setminus D_{\frac{r_0}{2\Lambda}}\}<T_0
\end{equation*}
for any $\eta\in[0,2r_0^2]$, where $r_1=2\Lambda r_0$. 
\end{lemma}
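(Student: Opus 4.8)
The plan is to exploit the conserved quantity of the slow-down flow \eqref{slow down flow} together with the fact that the chosen $\psi_0$ is, near the origin, scale invariant in the variable $u/r_0^2$. In the coordinates in which $A=\mathrm{diag}(e^\Lambda,e^{-\Lambda})$, the system \eqref{slow down flow} reads $\dot s_1=s_1\psi_\eta(u)\Lambda$ and $\dot s_2=-s_2\psi_\eta(u)\Lambda$ with $u:=s_1^2+s_2^2$, so $\frac{d}{dt}(s_1 s_2)\equiv 0$; set $\rho:=|s_1(0)s_2(0)|^{1/2}$, a constant of motion of the trajectory. Since $\mathrm{trace}(A)\ge 3$ forces $e^{\Lambda}\ge\tfrac{3+\sqrt5}{2}>e^{1/2}$, we have $\Lambda>\tfrac12$ (if $\Lambda\le\tfrac12$ the set $D_{r_1}\setminus D_{r_0/(2\Lambda)}$ is empty and there is nothing to prove); thus $\mathcal A:=D_{r_1}\setminus D_{r_0/(2\Lambda)}$, $r_1=2\Lambda r_0$, is a genuine annulus, contained in the domain of $\psi_\eta$ for $r_0$ small, on which $u\in(r_0^2/(4\Lambda^2),\,4\Lambda^2 r_0^2]$.

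First I would obtain a lower bound for the angular speed on $\mathcal A$ that is independent of $r_0$ and $\eta$. By property \ref{monotone}, $\psi_\eta\ge\psi_0$ for every $\eta\ge 0$; properties \ref{psi0 near 0} and \ref{stronger bounds on psi0} together give $\psi_0(u)\ge(u/r_0^2)^{1+\alpha}$ for all $u\in[0,r_0^2]$, and $\psi_\eta(u)=1$ for $u\ge r_0^2$ by property \ref{right end psi_eta}. Since $u\ge r_0^2/(4\Lambda^2)$ on $\mathcal A$, this yields $\psi_\eta(u)\ge\min\{(4\Lambda^2)^{-(1+\alpha)},1\}=(4\Lambda^2)^{-(1+\alpha)}=:c_0$ on $\mathcal A$, a constant depending only on $\Lambda$ and $\alpha$. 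This is the crucial point: because the radii of $\mathcal A$ are proportional to $r_0$ while $\psi_0$ depends on $u$ only through $u/r_0^2$ on $[0,r_0^2]$, no such uniform bound would be available on a fixed annulus, and $\eta\to 0$ cannot spoil it either.

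Next I would linearize the time estimate using the conserved $s_1 s_2$. On a trajectory with $\rho>0$ the coordinates never vanish, so $\theta(t):=\tfrac12\log|s_1(t)/s_2(t)|$ is defined; then $|s_1|=\rho e^{\theta}$, $|s_2|=\rho e^{-\theta}$, so $u=2\rho^2\cosh(2\theta)$, and \eqref{slow down flow} gives $\dot\theta=\psi_\eta(u)\Lambda$. Hence $\theta$ is strictly increasing, with $\dot\theta\ge c_0\Lambda$ whenever $s_\eta(t)\in\mathcal A$. If $[t_1,t_2]$ is any interval with $s_\eta(t)\in\mathcal A$ throughout, then $\theta(t_2)-\theta(t_1)\ge c_0\Lambda(t_2-t_1)$, while, since $|\theta|=\tfrac12\mathrm{arccosh}(u/2\rho^2)$ depends only on $u$, the elementary bounds $\log x\le\mathrm{arccosh}(x)\le\log(2x)$ ($x\ge 1$) give $\theta(t_2)-\theta(t_1)\le\log(32\Lambda^4)$: if $\theta$ keeps one sign on $[t_1,t_2]$ then $u$ is monotone with $u(t_2)/u(t_1)<16\Lambda^4$, and if $\theta$ crosses $0$ then $u=2\rho^2$ at that instant, which lies in $\mathcal A$, so $\rho^2>r_0^2/(8\Lambda^2)$ and $u/2\rho^2<16\Lambda^4$ throughout. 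A trajectory with $\rho=0$ lies on a coordinate axis (the origin never meeting $\mathcal A$), where $\log|s_i|$ evolves at rate $\pm\psi_\eta(s_i^2)\Lambda$, whose absolute value is $\ge c_0\Lambda$ on $\mathcal A$, while $\mathcal A$ confines $|s_i|$ to an interval of logarithmic length $\log(4\Lambda^2)$. In every case, a consecutive sojourn in $\mathcal A$ lasts less than $T_0:=1+(4\Lambda^2)^{1+\alpha}\log(32\Lambda^4)/\Lambda$, which depends only on $\Lambda$ and $\alpha$.

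The only real difficulty is the uniformity over $r_0$ and $\eta$ addressed in the second step; once the $r_0$-free lower bound $\psi_\eta\ge c_0$ on $\mathcal A$ is in hand, the conservation of $s_1 s_2$ converts ``time spent in $\mathcal A$'' into ``$\theta$-displacement accumulated in $\mathcal A$'', which is bounded by a constant. The most computational part is the $\mathrm{arccosh}$ case analysis in the third step, but it is routine.
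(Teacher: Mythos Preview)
Your argument is correct. Both your proof and the paper's rely on the same two facts: the conservation of $s_1s_2$ along \eqref{slow down flow}, and the uniform lower bound $\psi_\eta(u)\ge (4\Lambda^2)^{-(1+\alpha)}$ on the annulus coming from $\psi_\eta\ge\psi_0\ge(u/r_0^2)^{1+\alpha}$. The difference is in the choice of monotone coordinate. The paper splits on the size of $s_1s_2$: when $4s_1^2s_2^2\le r_0^4/(32\Lambda^4)$ it bounds $|du/dt|$ below on $\mathcal A$ (since then $u^2-4s_1^2s_2^2\ge u^2/2$), and when $4s_1^2s_2^2> r_0^4/(32\Lambda^4)$ it observes that $s_1^2$ stays bounded away from zero in $D_{r_1}$ and bounds $d(s_1^2)/dt$ below. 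You instead pass to the hyperbolic angle $\theta=\tfrac12\log|s_1/s_2|$, which satisfies $\dot\theta=\psi_\eta(u)\Lambda\ge c_0\Lambda$ globally, so the sojourn time is controlled by the $\theta$-width of $\mathcal A$ along the hyperbola $s_1s_2=\rho^2$. Your route is a bit more streamlined---$\theta$ is always strictly increasing with a uniform rate, so the only case distinction is whether $\theta$ crosses zero---whereas the paper's $u$ and $s_1^2$ are not monotone throughout, forcing a split by the magnitude of the invariant. Either way the output is a bound of the form $T_0=C(\Lambda,\alpha)$ independent of $r_0$ and $\eta$, which is exactly what Lemma~\ref{lemma: upper bound on lambda mme start from linear} needs.
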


\begin{proof}
We omit the dependence of $s_1$ and $s_2$ on $\eta$ in the notation below. Let $u=s_1^2+s_2^2$. 

Assume $s_1^2 \leq s_2^2$. Then, by \eqref{slow down flow}, we have 
\begin{align}\label{s1<s2}
    \frac{du}{dt} = 2\Lambda\psi_\eta(u)(s_1^2-s_2^2) = -2\Lambda\psi_\eta(u)(u^2-4s_1^2s_2^2)^{\frac{1}{2}}.
\end{align}

For $s_2^2\leq s_1^2$, by \eqref{slow down flow}, we have 
\begin{align}\label{s2<s1}
    \frac{du}{dt} = 2\Lambda\psi_\eta(u)(s_1^2-s_2^2) = 2\Lambda\psi_\eta(u)(u^2-4s_1^2s_2^2)^{\frac{1}{2}}.
\end{align}

Recall that by \eqref{slow down flow} we have $s_1(t)s_2(t)=s_1(0)s_2(0)$ for any $t$.

If $4s_1^2(0)s_2^2(0)\leq \frac{r_0^4}{32\Lambda^4}$, then, under the assumptions $s_1^2\leq s_2^2$ and $\left(\frac{r_0}{2\Lambda}\right)^2<u\leq r_1^2$, we have

\begin{align*}
    \frac{du}{dt} \leq -2\Lambda\psi_\eta(u)\left(\left(\frac{r_0}{2\Lambda}\right)^4-\frac{r_0^4}{32\Lambda^4}\right)^{\frac{1}{2}} = -\Lambda^{-1}\psi_\eta(u)\frac{r_0^2}{2\sqrt{2}}\leq -\Lambda^{-1}\psi_0\left(\frac{r_0^2}{(2\Lambda)^2}\right)\frac{r_0^2}{2\sqrt{2}}\leq -2^{-4-2\alpha}\Lambda^{-3-2\alpha} r_0^2.
\end{align*}

Similarly, under the assumption $s_2^2\leq s_1^2$ and $\left(\frac{r_0}{2\Lambda}\right)^2<u\leq r_1^2$, we have

\begin{align*}
    \frac{du}{dt} \geq 2\Lambda\psi_\eta(u)\left(\left(\frac{r_0}{2\Lambda}\right)^4-\frac{r_0^4}{32\Lambda^4}\right)^{\frac{1}{2}}\geq 2^{-4-2\alpha}\Lambda^{-3-2\alpha} r_0^2.
\end{align*}
Therefore, under the assumptions $s_1^2\leq s_2^2$, starting from $u(0)=r_1^2 = 4\Lambda^2r_0^2$, it takes at most $t=2^{2+2\alpha}\Lambda^{1+2\alpha}(16\Lambda^4-1)$ time to reach $u=\left(\frac{r_0}{2\Lambda}\right)^2$, unless the assumption $s_1^2\leq s_2^2$ is violated. If the assumption $s_1^2\leq s_2^2$ is violated, then, by symmetry of \eqref{s1<s2} and \eqref{s2<s1}, the orbit will leave $D_{r_1}$ in at most $2\cdot 2^{2+2\alpha}\Lambda^{1+2\alpha}(16\Lambda^4-1)$ time. A similar argument works if we start from $u(0)=\left(\frac{r_0}{2\Lambda}\right)^2$ under the assumption $s_2^2\leq s_1^2$.

Assume that $4s_1^2(0)s_2^2(0)>\frac{r_0^4}{32\Lambda^4}$. If the trajectory is in $D_{r_1}$, then $r_1^2\geq s_1^2(t)+s_2^2(t)\geq s_2(t)^2$, in particular, $\frac{r_0^4}{32\Lambda^4}<4s_1^2(t)s_2^2(t)\leq4s_1^2(t)r_1^2$, and therefore, $s_1^2(t)>\frac{r_0^2}{512\Lambda^6}$. Using \eqref{slow down flow}, we obtain
\begin{equation*}
\frac{d}{dt}\left(s_1^2\right) =  2s_1\frac{d}{dt}s_1 = 2s_1^2\psi_\eta(u)\Lambda>\frac{r_0^2}{\Lambda^{7+2\alpha}}2^{-10-2\alpha}.
\end{equation*}
Therefore, $s_1^2(t)$ will increase to $r_1^2$ and the orbit will leave $D_{r_1}$ in at most $2^{12+2\alpha}\Lambda^{9+2\alpha}$ time. 

Finally, $T_0 = \max\{2^{3+2\alpha}\Lambda^{1+2\alpha}(16\Lambda^4-1), 2^{12+2\alpha}\Lambda^{9+2\alpha}\}$.
\end{proof}

The following lemma could be of independent interest as we obtain an upper bound on the forward Lyapunov exponents for $G_{0,\eta}$. We recall that $G_{0,\eta}$ depends on the size of the ball where the slow-down deformation is done, i.e., it depends on $r_0$. Moreover, $G_{0,2r_0^2} = L_A$.

\begin{lemma}\label{lemma: upper bound on lambda mme start from linear}
For any $\chi>0$ there exists $r_\chi\in(0,1)$ such that for any $r_0\in(0,r_\chi)$, $x\in\mathbb T^2$ and $\pmb v\in T_x\mathbb T^2$ with $\|v\|\neq 0$
\begin{equation*}
\lambda(G_{0,\eta},x,\pmb v)<\Lambda+\chi
\end{equation*}
 for all $\eta\in(0,2r_0^2]$, where $\lambda(G_{0,\eta},x,\pmb v) = \limsup\limits_{n\rightarrow \infty}\frac{1}{n}\log\|D_xG_{0,\eta}^n\pmb v\|$ is the forward Lyapunov exponent of $(x,\pmb v)$ with respect to $G_{0,\eta}$.

		In particular, $\lambda_{mme}(G_{0,\eta})<\Lambda+\chi$ for all $\eta\in(0,2r_0^2]$.
\end{lemma}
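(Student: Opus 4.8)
The plan is to realise $G_{0,\eta}$ as the time-one map of an explicit flow on $\mathbb T^2$ and to control the logarithmic growth rate of its tangent cocycle by a pointwise quantity which exceeds $\Lambda$ only on a thin annulus around the fixed point, where Lemma~\ref{time in annulus} forbids orbits from lingering.

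\emph{Step 1: the flow picture.} Since $g_0=L_A$, the map $G_{0,\eta}$ in \eqref{map G_w} is the time-one map of the linear Anosov flow of $L_A$ on $\mathbb T^2\setminus D_{r_1}$ and of the slow-down flow \eqref{slow down flow} on $D_{r_1}$; as $\psi_\eta\equiv 1$ for $u\ge r_0^2$ these patch into a single $C^\infty$ flow $\Phi^\eta_t$ on $\mathbb T^2$ with $G_{0,\eta}=\Phi^\eta_1$, so $D_xG_{0,\eta}^n=D_x\Phi^\eta_n$. Working in the flat metric in which $\pmb v^u,\pmb v^s$ are orthonormal (the exponent is metric-independent), $\frac{d}{dt}\log\|D_x\Phi^\eta_t\pmb v\|$ is at most the top eigenvalue $\rho_\eta(\Phi^\eta_tx)$ of the symmetric part of the derivative of the generating vector field, whence
$\lambda(G_{0,\eta},x,\pmb v)\le\limsup_{n\to\infty}\tfrac1n\int_0^n\rho_\eta(\Phi^\eta_tx)\,dt$ for every $x$ and every $\pmb v\neq\pmb 0$.

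\emph{Step 2: the pointwise rate bound.} On $\mathbb T^2\setminus D_{r_0}$ the flow is the linear Anosov flow, so $\rho_\eta\equiv\Lambda$. On $D_{r_0}$, writing $u=s_1^2+s_2^2$ in the eigencoordinates, a direct computation shows the off-diagonal entries of the symmetrised derivative cancel, leaving top eigenvalue $\Lambda\big(\psi_\eta(u)+2s_1^2\psi'_\eta(u)\big)$; using $\psi_\eta\le 1$, $s_1^2\le u$ and property~\ref{derivative for all eta} of $\psi_\eta$ this is $\le\Lambda\big(1+\tfrac{8u}{3r_0^2}\big)$, hence $\le\tfrac{11\Lambda}{3}$ everywhere and $\le\Lambda+\tfrac\chi2$ as soon as $u\le\rho_0^2 r_0^2$, where $\rho_0=\rho_0(\chi)=\sqrt{3\chi/(16\Lambda)}$ (we may assume $\chi$ small, so $\rho_0<\tfrac12$). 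Thus $\rho_\eta\le(\Lambda+\tfrac\chi2)+\tfrac{8\Lambda}{3}\mathbb 1_{\mathcal B}$ with $\mathcal B:=D_{r_0}\setminus D_{\rho_0 r_0}$, all uniformly in $\eta\in(0,2r_0^2]$.

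\emph{Step 3: the orbit spends little time in $\mathcal B$.} Along any orbit, $\tfrac{d}{dt}(s_1^2-s_2^2)=2\Lambda\psi_\eta(u)\,u\ge0$ in the eigencoordinate chart, so $s_1^2-s_2^2$ is nondecreasing there; consequently $u(t)$ is monotone-then-monotone ("unimodal") on any maximal time interval (an \emph{episode}) during which the orbit stays in $D_{r_1}$, and moreover any excursion out of $D_{r_1}$ must leave a fixed ball $B((0,0),\rho_*)$ and return, which under the linear flow takes time at least $\tau(r_0)=\tfrac2\Lambda\log(\rho_*/r_1)\to\infty$ as $r_0\to0$. Hence $[0,n]$ meets at most $n/\tau(r_0)+O(1)$ episodes; and on each episode the time in the annulus $D_{r_1}\setminus D_{\rho_0 r_0}\supset\mathcal B$ is $<T_0(\rho_0)$ by the argument of Lemma~\ref{time in annulus} applied with inner radius $\rho_0 r_0$ — the needed uniform lower bound being $\psi_\eta(u)\ge\psi_0(u)\ge\rho_0^{2(1+\alpha)}$ on the annulus (by property~\ref{psi0 near 0} and monotonicity of $\psi_\eta$ in $\eta$), so that $T_0(\rho_0)$ depends only on $\rho_0,\Lambda,\alpha$. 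Therefore $\limsup_n\tfrac1n\Leb\{t\in[0,n]:\Phi^\eta_tx\in\mathcal B\}\le T_0(\rho_0)/\tau(r_0)$, giving $\lambda(G_{0,\eta},x,\pmb v)\le\Lambda+\tfrac\chi2+\tfrac{8\Lambda}{3}\,T_0(\rho_0)/\tau(r_0)$, which is $<\Lambda+\chi$ once $r_\chi$ is chosen small enough — uniformly in $x,\pmb v$ and $\eta\in(0,2r_0^2]$. Since $\lambda_{mme}(G_{0,\eta})$ equals $\lambda(G_{0,\eta},x,\pmb v)$ for $\nu_{G_{0,\eta}}$-a.e.\ $x$ and $\pmb v\notin E^s_x$, the last assertion follows.

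The main obstacle is Step 3: obtaining the time-in-$\mathcal B$ bound for \emph{every} point (not merely a.e., since the statement quantifies over all $x,\pmb v$), which forces the unimodality observation for $u$ and the extension of Lemma~\ref{time in annulus} to an arbitrary small inner radius; the crude bound $\psi_\eta\le 1$ in Step 2 is precisely what keeps the excess-rate region confined to $\mathcal B$.
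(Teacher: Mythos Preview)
Your Step~1 contains a fatal error: there is no global flow $\Phi^\eta_t$ on $\mathbb T^2$ whose time-one map is $G_{0,\eta}$. The linear vector field $(s_1,s_2)\mapsto(\Lambda s_1,-\Lambda s_2)$ does not descend from $\mathbb R^2$ to $\mathbb T^2=\mathbb R^2/\mathbb Z^2$, since the lattice is not flow-invariant; equivalently, $L_A$ is not isotopic to the identity and hence cannot be the time-one map of any flow on $\mathbb T^2$. The flow~\eqref{slow down flow} is defined only locally in $D_{r_1}$, as the paper says explicitly just before~\eqref{original flow}. This invalidates both the integral bound $\lambda\le\limsup_n\tfrac1n\int_0^n\rho_\eta(\Phi^\eta_tx)\,dt$ and the continuous-time return estimate $\tau(r_0)=\tfrac2\Lambda\log(\rho_*/r_1)$ in Step~3.

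The paper's proof follows the same three-region strategy but in discrete time, thereby avoiding the flow issue outside $D_{r_1}$. It bounds $\|D_xG_{0,\eta}\|_{op}$ by $e^\Lambda$ on $\mathbb T^2\setminus D_{r_1}$ (trivially), by $e^M$ on the annulus $D_{r_1}\setminus D_{r_0/(2\Lambda)}$, and by $e^\Lambda$ on $D_{r_0/(2\Lambda)}$ --- the latter two via the variational inequality~\eqref{estimate of the norm} applied \emph{inside} $D_{r_1}$ where the local flow is legitimate. Lemma~\ref{time in annulus} bounds consecutive iterates in the annulus by $T_0$, and the key return-time input is discrete: for any $N$ one can take $r_0$ small enough that once the orbit exits $D_{r_1}$ it spends at least $N$ iterates outside before re-entering. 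Your symmetric-part computation in Step~2 (the off-diagonal cancellation) is correct and in fact yields a cleaner $e^\Lambda$ bound on the inner disk than the paper's direct operator-norm estimate, but the bookkeeping must be redone iterate by iterate rather than along a fictitious global flow.
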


\begin{proof}
Consider $x\in\mathbb T^2$ and $\pmb v\in T_x\mathbb T^2$ with $\|v\|\neq 0$. Let $n$ be a natural number. We write

$$n = \sum\limits_{j=1}^{s}n_j,$$
where the numbers $n_j\in\{0\}\cup\mathbb N$ are chosen in the following way:

\begin{enumerate}
    \item The number $n_1$ is the first moment when $G_{0,\eta}^{n_1}(x)\in D_{r_1}\setminus D_{\frac{r_0}{2\Lambda}}$;
    \item The number $n_2$ is such that the number $n_1+n_2$ is the first moment when $G_{0,\eta}^{n_1+n_2}(x)\in D_{\frac{r_0}{2\Lambda}}$;
    \item The number $n_3$ is such that the number $n_1+n_2+n_3$ is the first moment when $G_{0,\eta}^{n_1+n_2+n_3}(x)\in D_{r_1}\setminus D_{\frac{r_0}{2\Lambda}}$;
    \item The number $n_4$ is such that the number $n_1+n_2+n_3+n_4$ is the first moment when $G_{0,\eta}^{n_1+n_2+n_3+n_4}(x)\not\in D_{r_1}$;
    \item The rest of the numbers are defined following the same pattern.
\end{enumerate}

If $x\in\mathbb T^2$ is such that the $G_{0,\eta}$-orbit of $x$ does not enter into $D_{r_1}$, then $\lambda(G_{0,\eta},x,\pmb v)\leq \Lambda$ because $G_{0,\eta} = L_A$ outside of $D_{r_1}$.

Assume the $G_{0,\eta}$-orbit of $x$ enters into $D_{r_1}$. By the definition of $\lambda(G_{0,\eta},x,\pmb v)$, in that case it is enough to consider the case when $G_{0,\eta}^{-1}(x)\in D_{r_1}$ but $x\not\in D_{r_1}$. 

We have
\begin{align}\label{expansion along path}
\log\|D_xG_{0,\eta}^n\pmb v\| = \log\|\pmb v\|+ \sum\limits_{j=1}^s\log\|D_{G_{0,\eta}^{n_1+n_2+\ldots+n_{j-1}}(x)}G_{0,\eta}^{n_j}\pmb v_j\|,    
\end{align}
where $\pmb v_1=\frac{\pmb v}{\|\pmb v\|}$, $\pmb v_2 = \frac{D_xG_{0,\eta}^{n_1}\pmb v_1}{\|D_xG_{0,\eta}^{n_1}\pmb v_1\|}$, and $\pmb v_j = \frac{D_{G_{0,\eta}^{n_1+n_2+\ldots+n_{j-2}}(x)}G_{0,\eta}^{n_{j-1}}\pmb v_{j-1}}{\|D_{G_{0,\eta}^{n_1+n_2+\ldots+n_{j-2}}(x)}G_{0,\eta}^{n_{j-1}}\pmb v_{j-1}\|}$ for $j=3, \ldots, s$. In particular, $\|\pmb v_j\|=1$ for $j=1, \ldots, s$.

Recall that $G_{0,\eta}$ coincides with $L_A$ in $\mathbb T^2\setminus D_{r_1}$ (see \eqref{map G_w}). Thus, for any $N\in \mathbb N$, there exists a positive number $\theta=\theta(N, \Lambda)$ such that if $r_1<\theta$, i.e., $r_0<\frac{\theta}{2\Lambda}$, then for any $y$ such that $G_{0,\eta}^{-1}(y)\in D_{r_1}$ but $y\not\in D_{r_1}$ we have  $G_{0,\eta}^n(y)\not\in D_{r_1}$ for any $\eta\in(0,2r_0^2]$ and $0\leq n\leq N$. Therefore, if $r_0$ is sufficiently small, then $n_1\geq N$.

The coefficient matrix of the variational equation \eqref{slow down flow} is 
\begin{align}\label{matrix variational}
    C_\eta&(s_1(t), s_2(t)) =\\
    &=\Lambda\begin{pmatrix} \psi_\eta(s_1^2(t)+s_2^2(t))+2s_1^2(t)\psi'_\eta(s_1^2(t)+s_2^2(t)) & 2s_1(t)s_2(t)\psi'_\eta(s_1^2(t)+s_2^2(t))\\
    -2s_1(t)s_2(t)\psi'_\eta(s_1^2(t)+s_2^2(t)) &-\psi_\eta(s_1^2(t)+s_2^2(t))-2s_2^2(t)\psi'_\eta(s_1^2(t)+s_2^2(t))
    \end{pmatrix}.\nonumber
\end{align}

Let $s_\eta(t)=(s_1(t),s_2(t))_\eta$ be the solution of \eqref{slow down flow} with initial condition $s_\eta(0)=x$. Denote by $A_\eta(t)$ a $2\times 2$-matrix solving the variational equation
\begin{equation}\label{equation for differential}
    \frac{dA_\eta(t)}{dt}=C_\eta(s_\eta(t))A_\eta(t)
\end{equation}

with initial condition $A_\eta(0)=Id$. Then, $A_\eta(1)=D_xG_{0,\eta}$.

Moreover, by \eqref{equation for differential} and the Cauchy-Schwarz inequality, we have for any vector $\pmb v$
\begin{equation}\label{estimate of the norm}
    \frac{d\|A_\eta(t)\pmb v\|}{dt}\leq \left\|\frac{d[A_\eta(t)\pmb v]}{dt}\right\|=\left\|C_\eta(s_\eta(t))A_\eta(t)\pmb v\right\|\leq \|C_\eta(s_\eta(t))\|_{op}\|A_\eta(t)\pmb v\|,
\end{equation}
where $\|\cdot\|_{op}$ is the operator norm.

By \eqref{estimate of the norm}, \eqref{matrix variational}, the definition of $\psi_\eta$, and property \ref{bound derivative} of $\psi_0$, we have that there exists a positive constant $M$ independent of $r_0$ and $\eta$ such that $\|A_\eta(t)\pmb v\|\leq e^{Mt}\|\pmb v\|$ for any $t$ and any vector $\pmb v$. In particular, $\|D_xG_{0,\eta}\|_{op}\leq e^M$ if $x\in D_{r_1}\setminus D_{\frac{r_0}{2\Lambda}}$.

Consider $x\in D_{\frac{r_0}{2\Lambda}}$. Note that, by \eqref{slow down flow}, we have that for any $t\in[0,1]$ and $\eta\in(0,2r_0^2]$ the image of $x$ under the time-$t$ map of the flow \eqref{slow down flow} is in $D_{\frac{r_0}{2}}$. In particular, $G_{0,\eta}(D_{\frac{r_0}{2\Lambda}})\subseteq D_{\frac{r_0}{2}}$ for any $\eta\in(0,2r_0^2]$. 

Recall that if $u\in[0,\left(\frac{r_0}{2}\right)^2]$, then $\psi_0(u) = \left(\frac{u}{r_0^2}\right)^{1+\alpha}$ and $\psi'_0(u)=\frac{1+\alpha}{r_0^2}\left(\frac{u}{r_0^2}\right)^{\alpha}$. Therefore, by the choice of $\psi_\eta(u)$ for $u\in[\frac{\eta^2}{r_0^2},\eta]$, we can guarantee that $0\leq \psi_\eta(u)\leq 2^{-2-2\alpha}$ and $0\leq \psi'_\eta(u)\leq \frac{1+\alpha}{r_0^2}2^{-2\alpha}$ for $u\in[0,\left(\frac{r_0}{2}\right)^2]$ and $\eta\in(0,2r_0^2]$. Then, $\|C_\eta(s_\eta(t))\|_{op}\leq \Lambda$ if $s_\eta(t)\in D_{\frac{r_0}{2}}$ because $2^{-2-2\alpha}(3+2\alpha)\in\left(0,\frac{3}{4}\right]$ and $2^{-1-2\alpha}(1+\alpha)\in\left(0,\frac{1}{2}\right]$ for $\alpha>0$. Therefore, if $x\in D_{\frac{r_0}{2\Lambda}}$, then $\|D_xG_{0,\eta}\|_{op}\leq e^{\Lambda}$. Thus, using \eqref{expansion along path}, and Lemma~\ref{time in annulus}, we obtain that for any $\chi>0$ there exists sufficiently small $r_\chi$ such that for any $r_0\in(0,r_\chi)$, $\eta\in(0,2r_0^2]$, $x\in\mathbb T^2$, and $\pmb v\in T_x\mathbb T^2$ with $\|\pmb v\|\neq 0$, 
\begin{equation*}
    \lambda(G_{0,\eta},x,\pmb v)\leq \Lambda +\frac{2T_0M}{N}\leq \Lambda+\chi.
\end{equation*}

\end{proof}

\subsubsection*{Lower bound on $\lambda_{mme}$}

Our next and final goal is to prove Lemma~\ref{bound lambda_mme below with variables} which gives a lower bound on $\lambda_{mme}$ for the maps in Construction II.

\begin{lemma}\label{better cone}
For any $\alpha\in(0,\frac{1}{3})$ and $\eps\in(0,1)$ such that $\frac{1+\alpha}{(1-\eps)^{2(1+\alpha)}}<\frac{4}{3}$ there exists $\rho=\rho(\alpha,\eps)\in(0,1)$ such that for every $s\in[0,1]$, $\eta>0$, and $p\in D_{r_1}$ we have 
$$DG_{s,\eta}\mathcal K^+_\rho(p)\subset \mathcal K^+_{\rho}(G_{s,\eta}(p)),$$
where $\mathcal K^+_\rho(p)$ is the cone of size $\rho$ in $T_p\mathbb T^2$, i.e.,
\begin{equation}\label{cone_rho}
\mathcal K^+_\rho(p) = \{\xi_1\pmb v^u+\xi_2\pmb v^s \,|\, \xi_1,\xi_2\in\mathbb R, |\xi_2|\leq \rho|\xi_1|\}.
\end{equation}
Moreover, $\rho$ can be expressed in the following way:
\begin{equation}
\rho(\alpha,\eps) = \frac{-((1-\eps)^{2(1+\alpha)}+1+\alpha)+ \sqrt{((1-\eps)^{2(1+\alpha)}+1+\alpha)^2-(1+\alpha)^2}}{(1+\alpha)}.
\end{equation}
\end{lemma}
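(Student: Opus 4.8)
The plan is to reduce the cone inclusion to a scalar differential inequality along the slow-down flow. Work in the coordinates centered at $(0,0)$ in which $\pmb v^u,\pmb v^s$ are the axes and $A=\mathrm{diag}(e^{\Lambda},e^{-\Lambda})$; on $D_{r_1}$ the map $G_{s,\eta}$ equals $g_\eta$, the time-one map of \eqref{slow down flow}, for \emph{every} $s$ (by \eqref{map G_w}), so it is enough to prove $Dg_\eta\,\mathcal K^+_\rho(p)\subset\mathcal K^+_\rho(g_\eta(p))$ for $p\in D_{r_1}$. Parametrize a tangent vector $\xi_1\pmb v^u+\xi_2\pmb v^s$ by its slope $\zeta=\xi_2/\xi_1$, so that $\mathcal K^+_\rho=\{\,|\zeta|\le\rho\,\}$; along an orbit of \eqref{slow down flow} the slope evolves by the Riccati equation \eqref{tangent equation}, which reduces to $\dot\zeta=-2\Lambda\zeta$ wherever $s_1^2+s_2^2\ge r_0^2$. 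Hence it suffices to show that whenever $|\zeta|=\rho$ the right-hand side of \eqref{tangent equation} points into $[-\rho,\rho]$, uniformly over all $u=s_1^2+s_2^2>0$, all points on the circle of radius $\sqrt u$, and all $\eta>0$: a slope started in $[-\rho,\rho]$ then cannot leave it (this also precludes finite-time blow-up of the Riccati equation), and evaluating at $t=1$ gives the inclusion.

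By the symmetry $(\zeta,s_1,s_2)\mapsto(-\zeta,s_1,-s_2)$, which reverses the sign of the right-hand side of \eqref{tangent equation} and preserves the circle, it is enough to treat $\zeta=\rho>0$, where the inward condition reads
\begin{equation*}
(\psi_\eta(u)+u\psi_\eta'(u))\,\rho+s_1s_2\,\psi_\eta'(u)(\rho^2+1)\ \ge\ 0 .
\end{equation*}
Since $\psi_\eta>0$ and $\psi_\eta'\ge 0$ on $D_{r_1}$ for $\eta>0$, and $|s_1s_2|\le u/2$ by the AM--GM inequality, the extreme case is $s_1s_2=-u/2$; dividing the resulting inequality by $\psi_\eta(u)>0$ and writing $\tau(u)=u\psi_\eta'(u)/\psi_\eta(u)$, it suffices that
\begin{equation*}
\rho\ \ge\ \tfrac{\tau(u)}{2}\,(1-\rho)^2\qquad\text{for every }u>0 .
\end{equation*}

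The crucial step is the bound $\tau(u)\le c:=\dfrac{1+\alpha}{(1-\eps)^{2(1+\alpha)}}$, uniformly in $r_0$ and $\eta$. For $u\ge r_0^2$ this is trivial since $\psi_\eta'(u)=0$. For $0<u<r_0^2$ use $\psi_\eta(u)\ge\psi_0(u)\ge(u/r_0^2)^{1+\alpha}$ — the first inequality from property \ref{monotone} of the family applied with $\eta_1=0$, the second from properties \ref{property_psi0} and \ref{stronger bounds on psi0} of $\psi_0$ — together with $\psi_\eta'(u)\le\dfrac{1+\alpha}{(1-\eps)^{2(1+\alpha)}r_0^2}(u/r_0^2)^{\alpha}$ from property \ref{derivative for all eta}; dividing these gives $\tau(u)\le c$. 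This is exactly where the explicit power-law normalization of $\psi_0$ near $0$ and the uniform derivative bounds engineered into the family $\{\psi_\eta\}$ enter, and it is the only nontrivial ingredient in the proof.

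Given $\tau\le c$, it remains to pick $\rho\in(0,1)$ with $\rho\ge\tfrac{c}{2}(1-\rho)^2$, i.e. $c\rho^2-2(c+1)\rho+c\le 0$. Since $c>0$, the smaller root $\rho=\dfrac{(c+1)-\sqrt{2c+1}}{c}$ lies in $(0,1)$ (one checks $\rho>0\iff c^2>0$ and $\rho<1\iff c>0$) and satisfies the inequality; substituting $c=\dfrac{1+\alpha}{(1-\eps)^{2(1+\alpha)}}$ and clearing denominators, with $d:=(1-\eps)^{2(1+\alpha)}$, yields
\begin{equation*}
\rho=\frac{(d+1+\alpha)-\sqrt{(d+1+\alpha)^2-(1+\alpha)^2}}{1+\alpha},
\end{equation*}
which is the asserted formula for $\rho(\alpha,\eps)$. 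Everything past the estimate $\tau\le c$ is elementary quadratic algebra, and as noted that estimate — hence the whole argument — is the substantive point.
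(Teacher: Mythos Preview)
Your proof is correct and follows essentially the same route as the paper's: reduce to the Riccati equation \eqref{tangent equation} for the slope, use the engineered bounds on $\psi_\eta,\psi_\eta'$ to control $u\psi_\eta'/\psi_\eta$ uniformly by $c=(1+\alpha)/(1-\eps)^{2(1+\alpha)}$, and solve the resulting quadratic; your symmetry reduction and the clean scalar criterion $\rho\ge\tfrac{c}{2}(1-\rho)^2$ are a slightly tidier packaging of the paper's case analysis by sign of $s_1s_2$ and $\zeta$. One caveat: the explicit formula you derive differs from the lemma's displayed formula by an overall sign (yours is positive, as $\rho\in(0,1)$ demands; the displayed expression is negative --- evidently a typo in the statement, consistent with the paper's proof setting $\rho=\zeta^+(\tfrac12)$ where $\zeta^+(\tfrac12)<0$), so your closing assertion that the two formulas agree is not literally correct, though your value is the intended one.
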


\begin{proof}
As in Lemma~\ref{invariant_cones_katokmap}, using the system of variational equations corresponding to the system \eqref{slow down flow}, we look at the following equation for the tangent $\zeta_\eta$:
 \begin{equation}\label{tangent equation_2}
\frac{d\zeta_\eta}{dt} = -2\Lambda((\psi_\eta(s_1^2+s_2^2)+(s_1^2+s_2^2)\psi'_\eta(s_1^2+s_2^2))\zeta_\eta+s_1s_2\psi_{\eta}'(s_1^2+s_2^2)(\zeta_\eta^2+1)).
\end{equation}
First, observe that if $(s_1,s_2)\in \left(D_{r_1}\setminus D_{r_0}\right)$ then $\psi_\eta$ is constant, in particular, $\zeta_\eta$ is decreasing when $\zeta_\eta>0$ and increasing when $\zeta_\eta<0$. Also, if $s_1s_2=0$, then we have the same conclusion about $\zeta_{\eta}$. Thus, we can assume in the consideration of the further cases that $s_1s_2\neq 0$. Due to symmetry, it is enough to analyze the case $s_1,s_2>0$.

Let $s_1,s_2>0$. Then, $\zeta_\eta$ is decreasing when $\zeta_\eta>0$, so we consider the case $\zeta_\eta<0$. Moreover, let $k = \frac{s_1s_2}{s_1^2+s_2^2}$. Notice that $k\in(0,\frac{1}{2}]$. 
 
Assume $(s_1,s_2)\in D_{r_0}$. By properties \ref{monotone} and \ref{derivative for all eta} of $\psi_\eta$, we have 
\begin{equation*}
\psi_{\eta}(s_1^2+s_2^2)\geq \left(\frac{s_1^2+s_2^2}{r_0^2}\right)^{1+\alpha} \, \text{ and } \, \psi'_{\eta}(s_1^2+s_2^2)\leq \frac{1+\alpha}{(1-\eps)^{2(1+\alpha)}r_0^2}\left(\frac{s_1^2+s_2^2}{r_0^2}\right)^\alpha. 
\end{equation*}
Thus, plugging this expression into \eqref{tangent equation_2}, we obtain
\begin{equation*}
\frac{d\zeta_\eta}{dt}\geq -2\Lambda\psi'_\eta(s_1^2+s_2^2)(s_1^2+s_2^2)\left(\left(\frac{(1-\eps)^{2(1+\alpha)}}{1+\alpha}+1\right)\zeta_\eta+k(\zeta_\eta^2+1)\right).
\end{equation*}

It is easy to see that $\frac{d\zeta_\eta}{dt}\geq 0$ if $\zeta_\eta\in[\zeta^-(k),\zeta^+(k)]$, where 
\begin{equation*}
\zeta^\pm(k) = \frac{-((1-\eps)^{2(1+\alpha)}+1+\alpha)\pm \sqrt{((1-\eps)^{2(1+\alpha)}+1+\alpha)^2-4k^2(1+\alpha)^2}}{2k(1+\alpha)}. 
\end{equation*}
Also, $\zeta^+(k)\geq \zeta^+\left(\frac{1}{2}\right)$ and $\zeta^-(k)\leq \zeta^-\left(\frac{1}{2}\right)$ for $k\in(0,\frac{1}{2}]$. Thus, $\zeta_\eta$ is non-decreasing for $\zeta_\eta\in\left[\zeta^-\left(\frac{1}{2}\right),\zeta^+\left(\frac{1}{2}\right)\right]$.

As a result, using that $\zeta_\eta$ is smooth and the above analysis, we obtain that $\rho(\alpha,\eps) = \zeta^+(\frac{1}{2})$ gives the desired cone.
\end{proof}

Let $p\in\mathbb T^2$ and $\pmb v\in T_p\mathbb T^2$. Then, $\pmb v = \xi_1 \pmb v^u+\xi_2 \pmb v^s$. Denote by $\|\cdot\|_{u,s}$ the norm in $\mathbb R^2$ such that $\|\pmb v\|_{u,s}^2 = \xi_1^2+\xi_2^2$.

\begin{lemma}\label{expanding in better cone}
Assume we are in the setting of Lemma~\ref{better cone}. For any $p\in D_{r_1}$, and any $\pmb v\in \mathcal K_{\rho(\alpha,\eps)}^+(p)$ (see \eqref{cone_rho}), we have
\begin{equation*}
\|DG_{s,\eta}\pmb v\|_{u,s}\geq\|\pmb v\|_{u,s}.
\end{equation*}

In particular, for any $p\in D_{r_1}$, and any $\pmb v\in \mathcal K_{\rho(\alpha,\eps)}^+(p)$, 
\begin{equation*}
\|DG_{s,\eta}\pmb v\|\geq K_1K_2^{-1}\|\pmb v\|,
\end{equation*} 
where $K_1,K_2$ are constants coming from the equivalence of norms $\|\cdot\|$ and $\|\cdot\|_{u,s}$ in $\mathbb R^2$, i.e., $0<K_1\leq 1\leq K_2$, $K_1\|\pmb u\|_{u,s}\leq\|\pmb u\|\leq K_2\|\pmb u\|_{u,s}$ for any $\pmb u\in\mathbb R^2$.

Moreover, let $\mathcal C_p^+$ be the union of the positive cone in the tangent space at $p\in\mathbb T^2$ spanned by $\pmb v^+_{min}(-\beta)$ and $\pmb v^+_{max}$ (see \eqref{bounds_of_cones}) and its symmetric complement. There exists $N\in\mathbb N$ such that for each $p\in\mathbb T^2$ we have 
\begin{equation*}
\left(DL_A\right)^N(\mathcal C^+_p)\subset \mathcal K^+_{\rho(\alpha,\eps)}(L_A^N(p)) \text{  and  } \left(DL_A\right)^N\left(\mathcal K^+_{\rho(\alpha,\eps)}(p)\right)\subset \mathcal C^+_{L_A^N(p)}. 
\end{equation*} 
\end{lemma}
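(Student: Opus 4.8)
The plan is to work in the coordinate system centered at $(0,0)$ with axes $\pmb v^u,\pmb v^s$, in which $\|\cdot\|_{u,s}$ is the Euclidean norm, $DL_A=\mathrm{diag}(e^\Lambda,e^{-\Lambda})$, and the flow~\eqref{slow down flow} together with its variational equation with coefficient matrix~\eqref{matrix variational} are written. Note first that on $D_{r_1}$ the map $G_{s,\eta}$ coincides with $g_\eta$, the time-one map of~\eqref{slow down flow}, independently of $s$. Fix $p\in D_{r_1}$, $\eta>0$ and $\pmb v\in\mathcal K^+_{\rho(\alpha,\eps)}(p)$; let $s_\eta(t)=(s_1(t),s_2(t))$ be the solution of~\eqref{slow down flow} with $s_\eta(0)=p$ and $\pmb v(t)=(\xi_1(t),\xi_2(t))$ the solution of the variational equation with $\pmb v(0)=\pmb v$, so that $\pmb v(1)=D_pG_{s,\eta}\pmb v$. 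Running the argument of Lemma~\ref{better cone} along the flow — and using that outside $D_{r_0}$ the flow is linear with $\psi_\eta\equiv1$, so slopes satisfy $\dot\zeta=-2\Lambda\zeta$ — the cone field $q\mapsto\mathcal K^+_{\rho(\alpha,\eps)}(q)$ is invariant under every time-$t$ map of~\eqref{slow down flow} with $t\in[0,1]$; hence $\xi_2(t)^2\le\rho(\alpha,\eps)^2\,\xi_1(t)^2$ for all $t\in[0,1]$.

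Since the off-diagonal part of~\eqref{matrix variational} is antisymmetric, it contributes nothing to the associated quadratic form, and with $u=s_1^2+s_2^2$ one computes
\begin{equation*}
\frac{d}{dt}\|\pmb v(t)\|_{u,s}^2=2\big\langle\pmb v(t),\,C_\eta(s_\eta(t))\,\pmb v(t)\big\rangle_{u,s}=2\Lambda\Big(\psi_\eta(u)(\xi_1^2-\xi_2^2)+2\psi'_\eta(u)\big(s_1^2\xi_1^2-s_2^2\xi_2^2\big)\Big).
\end{equation*}
Dropping the nonnegative term $s_1^2\xi_1^2$ and using $s_2^2\le u$, $\xi_2^2\le\rho^2\xi_1^2$ and $\psi_\eta,\psi'_\eta\ge0$, this is $\ge 2\Lambda\xi_1^2\big((1-\rho^2)\psi_\eta(u)-2\rho^2u\,\psi'_\eta(u)\big)$. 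If $u\ge r_0^2$ then $\psi_\eta(u)=1$ and $\psi'_\eta(u)=0$, so the bracket is $\ge0$; if $u<r_0^2$ then $\psi_\eta(u)\ge(u/r_0^2)^{1+\alpha}$ and $\psi'_\eta(u)\le\frac{1+\alpha}{(1-\eps)^{2(1+\alpha)}r_0^2}(u/r_0^2)^{\alpha}$ by the properties of $\psi_\eta$ from Section~\ref{section slowing}, so nonnegativity of the bracket reduces to $(1-\rho^2)(1-\eps)^{2(1+\alpha)}\ge2\rho^2(1+\alpha)$, i.e.\ $\rho^2\le\frac{(1-\eps)^{2(1+\alpha)}}{(1-\eps)^{2(1+\alpha)}+2(1+\alpha)}$; the number $\rho(\alpha,\eps)$ of Lemma~\ref{better cone}, together with the standing hypothesis $\frac{1+\alpha}{(1-\eps)^{2(1+\alpha)}}<\frac43$, satisfies this inequality by a direct computation from its explicit value. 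Hence $\frac{d}{dt}\|\pmb v(t)\|_{u,s}^2\ge0$ on $[0,1]$, so $\|D_pG_{s,\eta}\pmb v\|_{u,s}=\|\pmb v(1)\|_{u,s}\ge\|\pmb v(0)\|_{u,s}=\|\pmb v\|_{u,s}$. The second assertion is then immediate from the equivalence of norms: $\|D_pG_{s,\eta}\pmb v\|\ge K_1\|D_pG_{s,\eta}\pmb v\|_{u,s}\ge K_1\|\pmb v\|_{u,s}\ge K_1K_2^{-1}\|\pmb v\|$.

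For the last assertion, observe that all cones involved are translation-invariant, so it suffices to argue on a single tangent fibre. In the $(u,s)$-coordinates $(DL_A)^N$ sends a vector of slope $\zeta=\xi_2/\xi_1$ to one of slope $e^{-2N\Lambda}\zeta$. By~\eqref{boundaries of cone in basis of eigenvectors} and the monotonicity of $\phi^+$, $\mathcal C^+$ is exactly the set of vectors whose slope lies in a fixed interval $[\sigma_-,\sigma_+]$ with $\sigma_-<0<\sigma_+$ and $[\sigma_-,\sigma_+]\subseteq[-1,1]$ (the containment being $\mathcal C^+\subset\mathcal K^+$, as noted in the proof of Lemma~\ref{invariant_cones_katokmap}), while $\mathcal K^+_{\rho(\alpha,\eps)}$ is the set of vectors of slope in $[-\rho(\alpha,\eps),\rho(\alpha,\eps)]$, and $0$ is interior to both intervals. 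Since $(DL_A)^N$ scales every such slope interval by $e^{-2N\Lambda}\to0$, choosing $N\in\mathbb N$ with $e^{-2N\Lambda}\le\min\!\big(\rho(\alpha,\eps)/\max(|\sigma_-|,\sigma_+),\,\min(|\sigma_-|,\sigma_+)/\rho(\alpha,\eps)\big)$ yields both $(DL_A)^N(\mathcal C^+_p)\subset\mathcal K^+_{\rho(\alpha,\eps)}(L_A^N(p))$ and $(DL_A)^N(\mathcal K^+_{\rho(\alpha,\eps)}(p))\subset\mathcal C^+_{L_A^N(p)}$.

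The main obstacle is the first assertion — specifically, verifying that the constant $\rho(\alpha,\eps)$ produced by Lemma~\ref{better cone} is precisely small enough, by virtue of the hypothesis $\frac{1+\alpha}{(1-\eps)^{2(1+\alpha)}}<\frac43$, to guarantee that the Euclidean norm in the $\pmb v^u,\pmb v^s$ coordinates is not contracted on the cone $\mathcal K^+_{\rho(\alpha,\eps)}$; once this quantitative compatibility is in hand, the differentiation of $\|\pmb v(t)\|_{u,s}^2$ along the variational equation and the slope-interval bookkeeping in the last part are routine.
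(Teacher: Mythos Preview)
Your proof is correct and follows essentially the same approach as the paper's. Both compute $\frac{d}{dt}\|\pmb v(t)\|_{u,s}^2$ along the variational equation, use the antisymmetry of the off-diagonal of $C_\eta$ to obtain $2\Lambda\big((\psi_\eta+2s_1^2\psi'_\eta)\xi_1^2-(\psi_\eta+2s_2^2\psi'_\eta)\xi_2^2\big)$, and reduce nonnegativity to the inequality $\rho^2\le\big(1+2\tfrac{1+\alpha}{(1-\eps)^{2(1+\alpha)}}\big)^{-1}$; the paper factors out $(\psi_\eta+2s_1^2\psi'_\eta)$ and bounds the ratio, while you drop the term $s_1^2\xi_1^2$ and bound $s_2^2\le u$, but the resulting condition is identical. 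The cone-inclusion argument under $(DL_A)^N$ is likewise the same slope-scaling $\zeta\mapsto e^{-2N\Lambda}\zeta$, with you giving a slightly more explicit choice of $N$.
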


\begin{proof}
Let $p\in D_{r_1}$ and $\pmb v(0)\in \mathcal K_\rho^+(p)$. Moreover, $\pmb v(t)$ is the evolution of $\pmb v(0)$ along the flow. In particular, $\pmb v(t) = \xi_1(t)\pmb v^u+\xi_2(t)\pmb v^s$ where $|\xi_2(t)|\leq\rho|\xi_1(t)|$ for any $t>0$.

By properties of $\psi_\eta$, we have $\frac{\psi'_\eta(s_1^2+s_2^2)}{\psi_\eta(s_1^2+s_2^2)}\leq \frac{1+\alpha}{(1-\eps)^{2(1+\alpha)}}\frac{1}{s_1^2+s_2^2}$. Using \eqref{slow down flow}, for any $\alpha\in(0,\frac{1}{3})$ and $\eps\in(0,1)$ such that $\frac{1+\alpha}{(1-\eps)^{2(1+\alpha)}}<\frac{4}{3}$ we have
\begin{align*}
\frac{d}{dt}(\xi_1^2+\xi_2^2) &= 2\Lambda\left((\psi_\eta(s_1^2+s_2^2)+2s_1^2\psi'_\eta(s_1^2+s_2^2))\xi_1^2-(\psi_\eta(s_1^2+s_2^2)+2s_2^2\psi'_\eta(s_1^2+s_2^2))\xi_2^2\right)\\
&= 2\Lambda (\psi_\eta(s_1^2+s_2^2)+2s_1^2\psi'_\eta(s_1^2+s_2^2))\left(\xi_1^2-\frac{\psi_\eta(s_1^2+s_2^2)+2s_2^2\psi'_\eta(s_1^2+s_2^2)}{\psi_\eta(s_1^2+s_2^2)+2s_1^2\psi'_\eta(s_1^2+s_2^2)}\xi_2^2\right)\\
&\geq 2\Lambda (\psi_\eta(s_1^2+s_2^2)+2s_1^2\psi'_\eta(s_1^2+s_2^2))\xi_2^2\left(\rho^{-2}(\alpha,\eps)-\left(1+2\frac{1+\alpha}{(1-\eps)^{2(1+\alpha)}}\right)\right)\geq0.
\end{align*}

Let $z\in(0,1)$. Then, for any $n\in\mathbb N$, we have $\left(DL_A\right)^n(\mathcal K^+_{z}(p))=\mathcal K^+_{e^{-2n\Lambda}z}(L_A^n(p))$. Thus, we obtain the statement about the cone inclusions.
\end{proof}

Recall that $\bar\beta$, $\bar\delta$, and $\bar w$ are the values of the parameters in the construction of $f_{1,0}$ (see \eqref{definition_twist_maps}). Let $D_r$ be a disk of radius $r$ centered at $(0,0)$ and $\bar {\mathcal S}= \{(x,y)\in\mathbb T^2| x\in(m+l-\bar\delta+\bar w,m+l-\bar w)\}$, i.e., the region of the perturbation described in Construction I where $DG_{1,\eta}=A\left(\frac{\bar\beta+\bar\delta(1-\bar\delta)}{\bar \delta}\right)$ (see Proposition~\ref{eigenvector_eigenvalue_proposition}). Denote by $\nu_{G_{1,\eta}}$ the measure of maximal entropy for $G_{1,\eta}$.

\begin{lemma}\label{bound lambda_mme below with variables}
Let $\alpha\in(0,\frac{1}{3})$ and $\eps\in(0,1)$ such that $\frac{1+\alpha}{(1-\eps)^{2(1+\alpha)}}<\frac{4}{3}$. Let $\bar Q=\nu_{f_{1,0}}(\bar{\mathcal S})$. Then, for any $\sigma>0$ there exists $r_{\sigma}$ such that for all sufficiently small $r_0\in(0,r_{\sigma})$ and for all $\eta\in(0,2r_0^2]$ the following hold for $G_{1,\eta}$ obtained in Construction II with the parameters $\alpha, \eps, r_0$, and $\eta$:
\begin{enumerate}
\item $\nu_{G_{1,\eta}}(D_{r_\sigma})\leq \sigma$;
\item\label{lower bound in lemma on mme} $\lambda_{mme}(G_{1,\eta})\geq \left(\log\left(K_1K_2^{-1}\right)+\log(C)\right)\sigma+\log(C)(1-\bar Q)+\log\mu^+\left(\frac{\bar\beta+\bar\delta(1-\bar\delta)}{\bar\delta}\right)(\bar Q-\sigma),$

where $C$ is a constant that depends only on the matrix $A$ and $\bar\beta$  (see \eqref{large expansion estimate}), and $K_1, K_2$ are constants in Lemma~\ref{expanding in better cone}.
\end{enumerate}
\end{lemma}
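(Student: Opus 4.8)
The plan is to combine two ingredients: (i) the cone-and-expansion estimate for $G_{1,\eta}$ on the deformation strip $\bar{\mathcal S}$, exactly as in the lower bound for $\lambda_{mme}(F^w_{l,\delta})$ from Section~\ref{section: estimate_mme}, and (ii) a control on the $\nu_{G_{1,\eta}}$-measure of the small ball $D_{r_\sigma}$ where the slow-down deformation genuinely shrinks vectors. First I would establish item (1): since $G_{1,\eta}$ equals $L_A$ outside $D_{r_1}$, and by Lemma~\ref{expanding in better cone} the annulus $D_{r_1}\setminus D_{r_0}$ still expands $\|\cdot\|_{u,s}$ on the cone $\mathcal K^+_{\rho(\alpha,\eps)}$, the measure of maximal entropy cannot concentrate on a tiny ball. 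Concretely, $G_{1,\eta}$ is topologically semiconjugate to a topological Markov shift via a Markov partition built from $W^u_w((0,0))$, $W^s_w((0,0))$ exactly as in Lemma~\ref{element_inside_for_smooth} (these manifolds are unchanged near $(0,0)$, where $G_{1,\eta}=L_A$), and the Parry measure gives zero mass to a single point; by taking $r_\sigma$ small enough that $D_{r_\sigma}$ is covered by finitely many partition elements of total $\nu_{G_{1,\eta}}$-mass below $\sigma$, uniformly in $\eta$ and in small $r_0$, we get $\nu_{G_{1,\eta}}(D_{r_\sigma})\le\sigma$. (Uniformity in $\eta$ is where one must be careful: the semiconjugacy and hence the Parry measure depend continuously on $\eta$ as in Remark~\ref{remark about continuity}, and as $\eta\to 0$ the dynamics degenerates, so I would phrase item (1) with $r_\sigma$ chosen first, then $r_0<r_\sigma$, and invoke continuity/compactness on $\eta\in[0,2r_0^2]$.)

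Next I would prove item (2) by the same ``visits to a good strip'' Birkhoff argument used twice already in the paper. Fix a generic point $p$ for $\nu_{G_{1,\eta}}$ and a unit vector $\pmb v\in\mathcal C^+_p$. Decompose the orbit into consecutive blocks according to three regimes: the block is in $\bar{\mathcal S}$ (where $DG_{1,\eta}=A\big(\tfrac{\bar\beta l+\bar\delta(1-\bar\beta)}{\bar\delta}\big)$ and, by \eqref{large expansion estimate}, a vector in $\mathcal C^+$ expands by at least $\mu^+\big(\tfrac{\bar\beta l+\bar\delta(1-\bar\beta)}{\bar\delta}\big)^{n_j}C$ per block), the block is in $D_{r_\sigma}$ (where we only control $\|D_pG_{1,\eta}\pmb v\|\ge K_1K_2^{-1}\|\pmb v\|$ per iterate, via Lemma~\ref{expanding in better cone}, so across a block of length $n_j$ one loses at most a factor $(K_1K_2^{-1})^{n_j}$ — note $K_1K_2^{-1}\le 1$), and the remaining block in $\mathbb T^2\setminus(\bar{\mathcal S}\cup D_{r_\sigma})$ (where at worst $DG_{1,\eta}$ satisfies $\|D_pG_{1,\eta}\pmb v\|\ge C\|\pmb v\|$ — this is the general cone-expansion bound; outside $D_{r_1}$ it is just $L_A$, and inside $D_{r_1}\setminus D_{r_\sigma}$ the same estimate as in \eqref{expansion under A}–\eqref{large expansion estimate} applies). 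Summing the logarithms along the orbit and dividing by $n$, Birkhoff's ergodic theorem for $\nu_{G_{1,\eta}}$ gives
\[
\lambda_{mme}(G_{1,\eta})\ \ge\ \log\mu^+\!\Big(\tfrac{\bar\beta l+\bar\delta(1-\bar\beta)}{\bar\delta}\Big)\,\nu_{G_{1,\eta}}(\bar{\mathcal S})+\log(C)\,\nu_{G_{1,\eta}}\big(\mathbb T^2\setminus\bar{\mathcal S}\big)+\log\big(K_1K_2^{-1}\big)\,\nu_{G_{1,\eta}}(D_{r_\sigma}).
\]
Then I would feed in the three measure estimates: $\nu_{G_{1,\eta}}(\bar{\mathcal S})\ge \bar Q-\sigma$ (because $\bar{\mathcal S}$ is disjoint from $D_{r_\sigma}$ once $r_0$ is small, $G_{1,\eta}=f_{1,0}$ there, and the Markov-partition/Parry-measure comparison of Lemma~\ref{element_inside_for_smooth} together with continuity of the m.m.e. controls the change by at most $\nu_{G_{1,\eta}}(D_{r_\sigma})\le\sigma$), $\nu_{G_{1,\eta}}(\mathbb T^2\setminus\bar{\mathcal S})\le 1$, and $\nu_{G_{1,\eta}}(D_{r_\sigma})\le\sigma$. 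Since $\log C\le 0$ and $\log(K_1K_2^{-1})\le 0$, bounding $\nu_{G_{1,\eta}}(\mathbb T^2\setminus\bar{\mathcal S})$ above by $1$ in the first correction and by $1$ in the second gives exactly the stated inequality
\[
\lambda_{mme}(G_{1,\eta})\ \ge\ \big(\log(K_1K_2^{-1})+\log(C)\big)\sigma+\log(C)(1-\bar Q)+\log\mu^+\!\Big(\tfrac{\bar\beta+\bar\delta(1-\bar\delta)}{\bar\delta}\Big)(\bar Q-\sigma).
\]

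The main obstacle, as always in this paper, is controlling the measure of maximal entropy — specifically, getting item (1) and the inequality $\nu_{G_{1,\eta}}(\bar{\mathcal S})\ge\bar Q-\sigma$ \emph{uniformly in $\eta$} as $r_0\to 0$. The deformation is supported in $D_{r_1}$, which is disjoint from $\bar{\mathcal S}$ once $r_0$ is small, so on $\bar{\mathcal S}$ the map and its Markov structure are literally those of $f_{1,0}$; the only way mass leaves $\bar{\mathcal S}$ is through the periodic orbits passing through $D_{r_1}$, whose total m.m.e.-mass is at most $\nu_{G_{1,\eta}}(D_{r_1})$, which the cone-expansion argument of item (1) forces below $\sigma$. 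I would make this rigorous by comparing the two Markov partitions (that of $f_{1,0}$ and that of $G_{1,\eta}$) cell by cell outside $D_{r_1}$ and invoking continuity of the Parry measure under the $C^0$-small change of stable/unstable manifolds (as in the proof of Lemma~\ref{element_inside_for_smooth} and Remark~\ref{remark about continuity}); everything else is a routine rerun of the Birkhoff computation already carried out twice above.
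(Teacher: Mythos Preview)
Your overall strategy---Birkhoff averaging of cone-expansion estimates against $\nu_{G_{1,\eta}}$ over three regimes---matches the paper's. But there are two substantive differences, one a genuine gap and one a missed simplification.

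\textbf{The gap: cone compatibility.} You invoke Lemma~\ref{expanding in better cone} to get $\|DG_{1,\eta}\pmb v\|\ge K_1K_2^{-1}\|\pmb v\|$ per iterate in $D_{r_\sigma}$, but that lemma requires $\pmb v\in\mathcal K^+_{\rho(\alpha,\eps)}$, the \emph{narrow} cone of size $\rho<1$. The vector you are tracking lives in $\mathcal C^+$ (the cone from Lemma~\ref{Anosov diffeomorphism}), which sits inside $\mathcal K^+$ (size~1) but \emph{not} in general inside $\mathcal K^+_\rho$. The paper handles this by choosing $r_0$ so small that any orbit entering $D_{r_1}$ must first spend at least $N$ iterates in $D_{r_\sigma}\setminus D_{r_1}$, where $G_{1,\eta}=L_A$; these $N$ iterates of $L_A$ squeeze $\mathcal C^+$ into $\mathcal K^+_\rho$ (and on exit, $N$ more iterates send $\mathcal K^+_\rho$ back into $\mathcal C^+$). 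Only then does Lemma~\ref{expanding in better cone} apply inside $D_{r_1}$. With this buffer, the $\|\cdot\|_{u,s}$-norm is non-decreasing across the entire $D_{r_\sigma}$-block, so the loss is $K_1K_2^{-1}$ \emph{per block}, not per iterate. (Incidentally, your aside about ``inside $D_{r_1}\setminus D_{r_\sigma}$'' is backwards: $D_{r_1}\subset D_{r_\sigma}$, so that set is empty.)

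\textbf{The missed trick: measure control.} Your approach to item~(1) and to $\nu_{G_{1,\eta}}(\bar{\mathcal S})\ge\bar Q-\sigma$ via continuity of the m.m.e.\ in $\eta$ runs into exactly the difficulty you flag---the dynamics degenerates as $\eta\to 0$, so compactness on $[0,2r_0^2]$ is not immediate. The paper bypasses this entirely with a clean structural observation: build the Markov partition $\overline{\mathcal{MP}}$ for $f_{1,0}$ from a periodic point $q\ne(0,0)$, refined so that $(0,0)$ lies in the \emph{interior} of a single element $R_\sigma$ with $\nu_{f_{1,0}}(R_\sigma)<\sigma$, then take $r_\sigma$ with $D_{r_\sigma}\subset R_\sigma$. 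Since the slow-down deformation is supported in $D_{r_1}\subset D_{r_\sigma}\subset\mathrm{Int}(R_\sigma)$, the partition $\overline{\mathcal{MP}}$ is a Markov partition for \emph{every} $G_{1,\eta}$, with the \emph{same} transition matrix; hence $\nu_{G_{1,\eta}}$ and $\nu_{f_{1,0}}$ agree on each element of $\overline{\mathcal{MP}}$, uniformly in $\eta$ and $r_0$. This gives both $\nu_{G_{1,\eta}}(D_{r_\sigma})<\sigma$ and $\nu_{G_{1,\eta}}(\bar{\mathcal S})\ge\bar Q-\sigma$ for free, with no limiting argument.
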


\begin{proof}
Recall that in Construction II, for any sufficiently small $r_0\in(0,1)$ we have that $f_{1,0}=G_{1,2r_0^2}$ and for any $\eta\in(0,2r_0^2]$, $G_{1,\eta}(x)=f_{1,0}(x)$ if $x\in\mathbb T^2\setminus D_{r_1}$, where $r_1=2r_0\Lambda$. Also, there exists $\bar r>0$ such that for any $r\in(0,\bar r)$, $f_{1,0}(\bar {\mathcal S})\cap D_{r}=\emptyset$ and $f_{1,0}(D_{r})\cap \bar{\mathcal S}=\emptyset$. 

Consider a periodic point $q$ of $f_{1,0}$ other than $(0,0)$. Build a Markov partition $\mathcal {MP}$ for $f_{1,0}$ using the point $q$ and its stable and unstable manifolds, $W^s(q)$ and $W^u(q)$, respectively (Adler-Weiss construction). Since $(0,0)$ is a fixed point for $f_{1,0}$, then $(0,0)\not\in W^s(q)\cap W^u(q)$. In particular, there is a refinement $\overline{ \mathcal{MP}}$ of $\mathcal {MP}$ such that 
\begin{itemize}
\item if $\mathcal P\bar{\mathcal S} = \{R\in \overline{ \mathcal{MP}}| R\subset \bar{\mathcal{S}}\}$, then $\nu_{f_{1,0}}(\bigcup\limits_{R\in\mathcal P\bar{\mathcal S}}R)\geq \bar Q-\sigma$;
\item there exists $R_{\sigma}\in\overline{\mathcal{MP}}$ such that $(0,0)$ is in the interior of $R_\sigma$ and $\nu_{f_{1,0}}(R_\sigma)<\sigma$.
\end{itemize}

Choose $r_\sigma<\bar r$ such that $D_{r_\sigma}\subset R_\sigma$. 

Let $\rho(\alpha,\eps)$ be as in Lemma~\ref{better cone}. Let $\mathcal C_p^+$ be the union of the positive cone in the tangent space at $p\in\mathbb T^2$ spanned by $\pmb v^+_{min}(-\bar\beta)$ and $\pmb v^+_{max}$ (see \eqref{bounds_of_cones}) and its symmetric complement, where $\bar\beta$ is the value of $\beta$ in the construction of $f_{1,0}$. By Lemma~\ref{expanding in better cone}, we can choose $N\in\mathbb N$ such that for each $p\in\mathbb T^2$ we have 
\begin{equation*}
\left(DL_A\right)^N(\mathcal C^+_p)\subset \mathcal K^+_{\rho(\alpha,\eps)}(L_A^N(p)) \text{  and  } \left(DL_A\right)^N\left(\mathcal K^+_{\rho(\alpha,\eps)}(p)\right)\subset \mathcal C^+_{L_A^N(p)}. 
\end{equation*} 

Let $r_0>0$ such that $D_{r_1}\subset D_{r_\sigma}$. Recall that $r_1=2r_0\Lambda$. Since $f_{1,0}=L_A$ in a neighborhood of $(0,0)$, we can choose sufficiently small $r_0$ such that the following two facts hold:
\begin{itemize}
\item Let $p\in\mathbb T^2$ and $k, n\in\{0\}\cup\mathbb N$ be such that $f_{1,0}^{k}(p)\not\in D_{r_\sigma}$, $f_{1,0}^{k+j}(p)\in D_{r_\sigma}\setminus D_{r_1}$ for $j=1,2,\ldots, n$, and $f_{1,0}^{k+n+1}(p)\in D_{r_1}$. Then, $n\geq N$.

\item  Let $p\in\mathbb T^2$ and $k, n\in\{0\}\cup\mathbb N$ be such that $f_{1,0}^{k}(p)\in D_{r_1}$, $f_{1,0}^{k+j}(p)\in D_{r_\sigma}\setminus D_{r_1}$ for $j=1,2,\ldots, n$, and $f_{1,0}^{k+n+1}(p)\not\in D_{r_\sigma}$. Then, $n\geq N$.
\end{itemize}

Let $G_{1,\eta}$ be an Anosov diffeomorphism obtained from $f_{1,0}$ using Construction II with the parameter $r_0$ as above. Then, for any $\eta\in(0,2r_0^2]$ we have the following:
\begin{itemize}
\item $\nu_{G_{1,\eta}}(D_{r_\sigma})<\sigma$;
\item $\nu_{G_{1,\eta}}(\bar{\mathcal S})\geq\bar Q-\sigma$;
\item Let $p\in\mathbb T^2$ and $k, n\in\{0\}\cup\mathbb N$ be such that $G_{1,\eta}^{k}(p)\not\in D_{r_\sigma}$, $G_{1,\eta}^{k+j}(p)\in D_{r_\sigma}\setminus D_{r_1}$ for $j=1,2,\ldots, n$, and $G_{1,\eta}^{k+n+1}(p)\in D_{r_1}$. Then, $n\geq N$.

\item  Let $p\in\mathbb T^2$ and $k, n\in\{0\}\cup\mathbb N$ be such that $G_{1,\eta}^{k}(p)\in D_{r_1}$, $G_{1,\eta}^{k+j}(p)\in D_{r_\sigma}\setminus D_{r_1}$ for $j=1,2,\ldots, n$, and $G_{1,\eta}^{k+n+1}(p)\not\in D_{r_\sigma}$. Then, $n\geq N$.
\end{itemize}

Consider $p\in\mathbb T^2\setminus D_{r_\sigma}$ and a natural number $n$. We write $n=\sum\limits_{j=1}^s n_j$, where the numbers $n_j\in\{0\}\cup\mathbb N$ are chosen in the following way (see Figure~\ref{partition depending on regions} for an example):
\begin{enumerate}
\item The number $n_1$ is the first moment when $\left(G_{1,\eta}\right)^{n_1}(p)\in \bar{\mathcal S}\cup D_{r_\sigma}$;
\item The number $n_2$ is such that the number $n_1+n_2$ is the first moment when 

$\left(G_{1,\eta}\right)^{n_1+n_2}(p)\in \mathbb T^2\setminus\left(\bar{\mathcal S}\cup D_{r_\sigma}\right)$;

\item The rest of the numbers are defined following this pattern as done in the proof of Lemma \ref{lemma: upper bound on lambda mme start from linear}.
\end{enumerate} 

Notice that by the choice of $r_\sigma$, for any $j=1, 3, 5, \ldots$ we have that 

\begin{center}
either $\left(G_{1,\eta}\right)^{n_1+n_2+\ldots+n_j+k}(p)\in \bar{\mathcal S}$ for all $k\in\mathbb Z\cap[0,n_{j+1})$ 

or $\left(G_{1,\eta}\right)^{n_1+n_2+\ldots+n_j+k}(p)\in D_{r_\sigma}$ for all $k\in\mathbb Z\cap[0,n_{j+1})$.
\end{center}

\begin{figure}
\centering
\begin{tikzpicture}
\draw[->] (0,0)--(15,0) node[below] {$n$};
\draw (0,0.2)--(0,-0.2) node[below] {$0$};
\draw (2,0.2)--(2,-0.2) node[below] {$n_1$};

\draw[red, |-] (0,0.5)--(2,0.5) node[above, midway] {\footnotesize $\in \mathbb{T}^2\setminus (\bar{\mathcal S}\cup D_{r_\sigma})$};
\node[red] at (1,0.2) {\footnotesize $n_1$};

\draw (3,0.2)--(3,-0.2) node[below] {$n_1+n_2$};
\draw[blue, |-] (2,1.2)--(3,1.2) node[above, midway] {\footnotesize $\in \bar{\mathcal S}$};
\node[blue] at (2.5,0.9) {\footnotesize $n_2$};

\draw (5,0.2)--(5,-0.2);

\draw[red, |-] (3,0.5)--(5,0.5) node[above, midway] {\footnotesize $\in \mathbb{T}^2\setminus (\bar{\mathcal S}\cup D_{r_\sigma})$};
\node[red] at (4,0.2) {\footnotesize $n_3$};

\draw (9,0.2)--(9,-0.2);

\draw[violet, |-] (5,1.5)--(6,1.5) node[above, midway] {\footnotesize $\in D_{r_\sigma}\setminus D_{r_1}$};
\node[violet] at (5.5,1.2) {\footnotesize $\geq N$};

\draw[violet, |-] (8,1.5)--(9,1.5) node[above, midway] {\footnotesize $\in D_{r_\sigma}\setminus D_{r_1}$};
\node[violet] at (8.5,1.2) {\footnotesize $\geq N$};

\draw[olive, |-] (6,1.5)--(8,1.5) node[below, midway] {\footnotesize $\in D_{r_1}$};

\draw[purple, |-] (5,-0.7)--(9,-0.7) node[below, midway] {\footnotesize $\in D_{r_\sigma}$};
\node[purple] at (7,-0.4) {\footnotesize $n_4$};

\draw (11,0.2)--(11,-0.2);

\draw[red, |-] (9,0.5)--(11,0.5) node[above, midway] {\footnotesize $\in \mathbb{T}^2\setminus (\bar{\mathcal S}\cup D_{r_\sigma})$};
\node[red] at (10,0.2) {\footnotesize $n_5$};

\draw (12,0.2)--(12,-0.2);

\draw[purple, |-] (11,-0.7)--(12,-0.7) node[below, midway] {\footnotesize $\in D_{r_\sigma}$};
\node[purple] at (11.5,-0.4) {\footnotesize $n_6$};

\draw[violet, |-] (11,1.5)--(12,1.5) node[above, midway] {\footnotesize $\in D_{r_\sigma}\setminus D_{r_1}$};

\draw (14,0.2)--(14,-0.2);

\draw[red, |-] (12,0.5)--(14,0.5) node[above, midway] {\footnotesize $\in \mathbb{T}^2\setminus (\bar{\mathcal S}\cup D_{r_\sigma})$};
\node[red] at (13,0.2) {\footnotesize $n_7$};

\end{tikzpicture}
\caption{Partition of the orbit of $p$ under $G_{1,\eta}$.}
\label{partition depending on regions}
\end{figure}

Let $\pmb v\in\mathcal C_p^+$ and $\|\pmb v\|=1$. Then, we have
\begin{equation*}
    \log\|D_{p}\left(G_{1,\eta}\right)^n\pmb v\| =\sum\limits_{j=1}^s\log\|D_{\left(G_{1,\eta}\right)^{n_1+n_2+\ldots+n_{j-1}}(p)}\left(G_{1,\eta}\right)^{n_j}\pmb v_j\|,
\end{equation*}
where $\pmb v_1=\pmb v$, $\pmb v_2 = \frac{D_{p}\left(G_{l,\eta}\right)^{n_1}\pmb v_1}{\|D_{p}\left(G_{1,\eta}\right)^{n_1}\pmb v_1\|}$, and $\pmb v_j = \frac{D_{\left(G_{1,\eta}\right)^{n_1+n_2+\ldots+n_{j-2}}(p)}\left(G_{1,\eta}\right)^{n_{j-1}}\pmb v_{j-1}}{\|D_{\left(G_{1,\eta}\right)^{n_1+n_2+\ldots+n_{j-2}}(p)}\left(G_{1,\eta}\right)^{n_{j-1}}\pmb v_{j-1}\|}$ for $j=3, \ldots, s.$ In particular, $\|\pmb v_j\|=1$ for $j=1,\ldots, s$. 

Using \eqref{general expansion}, \eqref{large expansion estimate}, and Lemma~\ref{expanding in better cone}, we obtain for $k\in\mathbb N$
\begin{equation*}
\|D_{\left(G_{1,\eta}\right)^{n_1+n_2+\ldots+n_{j-1}}(p)}\left(G_{1,\eta}\right)^{n_j}\pmb v_j\|\geq\left\{
\begin{aligned}
&K_1K^{-1}_2 \quad &\text{if}\qquad &j=2k,\left(G_{1,\eta}\right)^{n_1+n_2+\ldots+n_{j-1}}(p)\in D_{r_\sigma},\\
&\left(\mu^+\left(\frac{\bar\beta+\bar\delta(1-\bar\delta)}{\bar\delta}\right)\right)^{n_j}C \quad &\text{if}\qquad &j=2k, \left(G_{1,\eta}\right)^{n_1+n_2+\ldots+n_{j-1}}(p)\in \bar{\mathcal S},\\
&\mu^{n_j} \quad &\text{if}\qquad &j=2k-1,
\end{aligned}\right.
\end{equation*}
where $C$ is a constant that depends only on $A$ and $\bar\beta$ (see \eqref{large expansion estimate}).

As a result,
\begin{align*}
    \log\|D_{p}\left(G_{1,\eta}\right)^n\pmb v\| &\geq \log\left(K_1K_2^{-1}\right)\sum\limits_{k=1}^{[\frac{s}{2}]}\mathbb{1}_{D_{r_\sigma}}\left(\left(G_{1,\eta}\right)^{n_1+n_2+\ldots+n_{2k-1}}(p)\right)\\&+\log\left(C\right)\sum\limits_{k=1}^{[\frac{s}{2}]}\mathbb{1}_{\bar{\mathcal S}}\left(\left(G_{1,\eta}\right)^{n_1+n_2+\ldots+n_{2k-1}}(p)\right)\\&+\log\mu^+\left(\frac{\bar\beta+\bar\delta(1-\bar\delta)}{\bar\delta}\right)\sum\limits_{k=1}^{[\frac{s}{2}]}n_{2k}\mathbb{1}_{\bar{\mathcal S}}\left(\left(G_{1,\eta}\right)^{n_1+n_2+\ldots+n_{2k-1}}(p)\right)\\&+(\log\mu)\sum\limits_{k=1}^{[\frac{s}{2}]}n_{2k-1},
\end{align*}
where $\mathbb{1}_{D_{r_\sigma}}$ and $\mathbb{1}_{\bar{\mathcal S}}$ are the characteristic functions of the corresponding sets.

Since $G_{1,\eta}$ is a smooth Anosov diffeomorphism, using Birkhoff's Ergodic Theorem, we obtain as $n\rightarrow \infty$
\begin{equation*}
\frac{1}{n}\sum\limits_{k=1}^{[\frac{s}{2}]}n_{2k}\mathbb{1}_{\bar{\mathcal S}}\left(\left(G_{1,\eta}\right)^{n_1+n_2+\ldots+n_{2k-1}}(p)\right) \rightarrow \nu_{G_{1,\eta}}(\bar{\mathcal S}) \quad\text{and}\quad \frac{1}{n}\sum\limits_{k=1}^{[\frac{s}{2}]}n_{2k-1}\rightarrow \nu_{G_{1,\eta}}(\mathbb T^2\setminus(D_{r_\sigma}\cup \bar{\mathcal S})).
\end{equation*}
Moreover, since $n_{2k}\geq 1$ for $k=1,2,\ldots, \left[\frac{s}{2}\right]$, we have
\begin{equation*}
\lim\limits_{n\rightarrow\infty}\frac{1}{n}\sum\limits_{k=1}^{[\frac{s}{2}]}\mathbb{1}_{D_{r_\sigma}}\left(\left(G_{1,\eta}\right)^{n_1+n_2+\ldots+n_{2k-1}}(p)\right)\leq\lim\limits_{n\rightarrow\infty}\sum\limits_{k=1}^{[\frac{s}{2}]}n_{2k}\mathbb{1}_{D_{r_\sigma}}\left(\left(G_{1,\eta}\right)^{n_1+n_2+\ldots+n_{2k-1}}(p)\right)=\nu_{G_{1,\eta}}(D_{r_\sigma}).
\end{equation*}
Also, notice that each visit to $\mathbb T^2\setminus(D_{r_\sigma}\cup\bar{\mathcal S})$ is at least one iterate, so
\begin{equation*}
\lim\limits_{n\rightarrow\infty}\frac{1}{n}\sum\limits_{k=1}^{[\frac{s}{2}]}\mathbb{1}_{\bar{\mathcal S}}\left(\left(G_{1,\eta}\right)^{n_1+n_2+\ldots+n_{2k-1}}(p)\right)\leq\lim\limits_{n\rightarrow\infty}\frac{1}{n}\left[\frac{s}{2}\right]\leq \nu_{G_{1,\eta}}(\mathbb T^2\setminus(D_{r_\sigma}\cup\bar{\mathcal S}))=1-\nu_{G_{1,\eta}}(\bar{\mathcal S})-\nu_{G_{1,\eta}}(D_{r_\sigma}).
\end{equation*}

Thus, since $K_1K_2^{-1}, C\in(0,1)$,
\begin{align*}
\lambda_{mme}(G_{1,\eta})&\geq \log\left(K_1K_2^{-1}\right)\nu_{G_{1,\eta}}(D_{r_\sigma})+\log(C)(1-\nu_{G_{1,\eta}}(\bar{\mathcal S})-\nu_{G_{1,\eta}}(D_{r_\sigma}))\\&+\log\mu^+\left(\frac{\bar\beta+\bar\delta(1-\bar\delta)}{\bar\delta}\right)\nu_{G_{1,\eta}(\bar{\mathcal S})}+(\log\mu)\nu_{G_{1,\eta}}(\mathbb T^2\setminus(D_{r_\sigma}\cup\bar{\mathcal S}))\\&\geq \log\left(K_1K_2^{-1}\right)\sigma+\log(C)(1-\bar Q+\sigma)+(\bar Q-\sigma)\log\mu^+\left(\frac{\bar\beta+\bar\delta(1-\bar\delta)}{\bar\delta}\right).
\end{align*} 

\end{proof}
\bibliography{bibliography}{}
\bibliographystyle{alpha}

\Addresses
\end{document}